\documentclass[11pt,a4paper]{amsart}
\usepackage{graphicx} 
\usepackage{amssymb}
\usepackage{amsthm}
\usepackage{amsmath}
\usepackage{amsxtra}
\usepackage{latexsym}
\usepackage{mathrsfs}
\usepackage[all,cmtip]{xy}
\usepackage[all]{xy}
\usepackage{enumitem}
\usepackage{xcolor}
\usepackage{graphicx}
\usepackage{comment}
\usepackage{mathabx,epsfig}
\usepackage{stmaryrd}
\usepackage{comment}
\usepackage{pst-node}

\usepackage{tikz-cd} 

\usepackage{hyperref}

\newcommand{\RNum}[1]{\uppercase\expandafter{\romannumeral #1\relax}}

\newtheorem{thm}{Theorem}[section]
\newtheorem{lem}[thm]{Lemma}
\newtheorem{conj}[thm]{Conjecture}

\newtheorem{prop}[thm]{Proposition}
\newtheorem{cor}[thm]{Corollary}

\theoremstyle{definition}
\newtheorem{defn}[thm]{Definition}
\newtheorem{rmk}[thm]{Remark}
\newtheorem{example}[thm]{Example}

\newtheorem{assump}[thm]{Assumption}

\numberwithin{equation}{section}

\newcommand\be{\begin{equation}}
	\newcommand\ba{\begin{eqnarray}}
		\newcommand\ee{\end{equation}}
	\newcommand\ea{\end{eqnarray}}

\def\C{{\mathbb C}}
\def\Q{{\mathbb Q}}
\def\R{{\mathbb R}}

\def\P{{\mathbb P}}
\def\A{{\mathbb A}}
\def\N{{\mathbb N}}

\DeclareMathOperator{\Prep}{Prep}

\DeclareMathOperator{\Supp}{Supp}

\DeclareMathOperator{\Orb}{Orb}

\DeclareMathOperator{\lcm}{lcm}

	{\end{list}}
\title[unlikely intersection and polynomial skew products]{Unlikely intersections in families of polynomial skew products}

\author[Chatchai Noytaptim]{Chatchai Noytaptim}\address{Department of Mathematics\\ Burapha University\\ Chon Buri\\ Thailand 20131}\email{ chatchai.noytaptim@gmail.com}
\author{Xiao Zhong}\address{University of Waterloo \\	Department of Pure Mathematics \\	Waterloo, Ontario \\	Canada  N2L 3G1}\email{x48zhong@uwaterloo.ca}
\date{\today}
\subjclass[2020]{37P55, 37P45}

\begin{document}
\begin{abstract}
Motivated by the study of unlikely intersection in the moduli space of rational maps, we initiate our investigation on algebraic dynamics for families of regular polynomial skew products in this article. Our goals are threefold.
\begin{enumerate}
    \item We classify special loci---which contain a Zariski dense set of postcritically finite points---in the moduli space of quadratic regular polynomial skew products.  More precisely, special loci include families of homogeneous polynomial endomorphisms, families of split endomorphisms, and polynomial endomorphisms of the form $(x^2,y^2+bx)$ up to conjugacy.  As a consequence, we verify a special case of a conjecture proposed by Zhong.
    \item Let $F_t$ be a family of regular polynomial skew products defined over a number field $K$ and let $P_t, Q_t\in K[t]\times K[t]$ be two initial marked points. We introduce a good height $h_{P_t}(t)$ which is built from the theory of adelic line bundles for quasi projective varieties.
    We show that the set of parameters $t_0\in \overline{K}$ for which $P_{t_0}$ and $Q_{t_0}$ are simultaneously $F_{t_0}$-preperiodic is infinite if and only if $h_{P_t}=h_{Q_t}$. 
    \item As an application of $h_{P_t}$, we show that, under some degree conditions of $P_t$, if there is an infinite set of parameters $t_0$ for which the marked point $P_{t_0}$ is preperiodic under $F_{t_0}$, then the Zariski closure of the forward orbit of $P_t$ lives in a proper subvariety of $\mathbb{P}^2$. As a by-product, we conditionally verify a special case of a conjecture of DeMarco--Mavraki which is a relative version of the Dynamical Manin--Mumford Conjecture.
\end{enumerate}
\end{abstract}
 
\maketitle

	\section{Introduction}
	
\subsection{Background and motivation}
	Unlikely intersection in algebraic dynamics was initiated by the seminal work of Baker-DeMarco. Loosely speaking, in a lot of situations a principle of unlikely intersection can be phrased  as ``a variety contains a Zariski dense subset of special points must itself be special".  In\cite{BD11}, Baker-DeMarco showed that---for fixed complex numbers $a$ and $b$---the set of parameters $c\in\mathbb{C}$ for which $a$ and $b$ are simultaneously preperiodic for $x^d+c$ is infinite if and only if $a^d=b^d$.  This question was raised by Zannier and was itself inspired by the groundbreaking work of Masser-Zannier \cite{MZ08, MZ10, MZ12} in arithmetic geometry regarding unlikely intersection of torsion sections of families of elliptic curves \cite[Chapter 3]{Za12}.
	Since the pioneering work of Baker-DeMarco, there have been several extensive studies and progresses in this direction. Ghioca-Hsia-Tucker \cite[Theorem 2.3]{GHT13} later generalizes \cite[Theorem  1.1]{BD11} to  one-parameter  families of polynomials with non-constant marked points. The same team later established a similar result for families of rational maps parameterized by algebraic curves over a number field \cite[Theorem 1.1]{GHT15}. In the same vein,  \cite{GHT16} considered a more general unlikely intersection instance for ($n+1$)-simultaneous preperiodic points under an $n$-parameter family of dynamical systems.\\
	\indent	A rational function $f$ on $\P^1_{\C}$ of degree $d\geq 2$ is said to be  post-critically finite (PCF, for short) if each of its $2d-2$ critical points  has finite forward orbit. It is known that PCF maps form a Zariski dense subset in the moduli space of rational maps (\cite[\S2.3]{BD13},  \cite[Theorem A]{De18},  and \cite[Proposition 6.18]{Si12} for different ingredients). It is well-known that PCF maps are $\overline{\mathbb{Q}}$-rational points in the moduli space by Thurston rigidity's theorem \cite{DH93}. Outside the locus of flexible Latt$\grave{\text{e}}$s family in  moduli space, PCF maps form a set of bounded Weil height \cite[Theorem 1.1]{BIJL14}. In \cite{BD13}, Baker and DeMarco studied the distribution of PCF maps in the moduli space and aimed to give a characterization of subvarieties in the moduli space of degree $d$ polynomials which contain a Zariski dense set of PCF maps. Our PCF maps can be viewed as special points in the moduli space. This phenomenon was partly motivated by the classical Andr$\acute{\text{e}}$-Oort conjecture, characterizing a subvariety of a Shimura variety which contains a Zariski dense set of CM points \cite[Chapter 4]{Za12}.   
		
		In recent years, there has been substantial progress toward the dynamical Andr$\acute{\text{e}}$-Oort (DAO) conjecture. An outstanding progress was made by Ji-Xie \cite{JX23} who proved DAO for curves in the moduli space of rational maps on $\mathbb{P}^1$. Their tools and methods were different from the one employed by Favre-Gauthier \cite{FG22} who remarkably proved DAO for families of polynomials. In addition, results regarding DAO conjecture  were earlier established in special cases, see \cite{BD11, BD13, FG18, GNK16, GNKY17, GY18}.\\

        Recently, DeMarco and Mavraki \cite{DM24} proposed a far-reaching conjecture, which we will refer to as the \emph{DeMarco--Mavraki Conjecture} in this paper. It is a
relative version of the dynamical Manin--Mumford conjecture studying the Manin--Mumford type question in families of dynamical systems, motivated by the
relative Manin--Mumford theorem of Gao and Habegger \cite{GH23}.
Notably, their work shows that the
conjecture unifies many previously known results on one-dimensional
dynamical unlikely intersections. For example, Ji--Xie’s theorem on DAO is shown to be a special case of the 
DeMarco--Mavraki conjecture. The same is true for the work on the 
simultaneous preperiodicity of marked points discussed in the first 
paragraph (see \cite[Theorem~3.5]{DM24}).

It is worth emphasizing that the DeMarco--Mavraki
conjecture remains largely open. In dimensions greater than one, only
very few cases are currently understood. In \cite{MS25}, Mavraki and
Schmidts established a weaker form of a special case when the dynamical
system is defined on $(\P^1)^n$ (see
\cite[Conjecture~1.9 and Theorem~1.8]{MS25}). More recently, in
\cite{Zho25}, the second author proved a weaker version of the
conjecture in the setting of regular polynomial endomorphisms on
$\P^2$, where the endomorphisms are fixed and the subvariety is allowed
to vary in the family.\\

	 In this article, guided by the framework of the DeMarco--Mavraki Conjecture, 
we study families of regular polynomial skew products with the aim of establishing 
results toward the conjecture in this setting. It is natural, as a first step 
beyond one-dimensional dynamics, to consider polynomial skew products. 
On an affine chart, maps in this class have the form $(f(x), g(x,y))$, 
where the first coordinate evolves according to a one-dimensional dynamical 
system while the second coordinate depends on both variables. 
This provides one of the first classes of dynamical systems exhibiting 
genuinely higher-dimensional behavior, and it has attracted considerable 
attention in recent years (see \cite{AB23, ABDPR16, DFR25, JZ23, JZ20, NZ24, UK20}). The set-up we consider is directly motivated by the results of unlikely intersections and PCF polynomials  discussed above in one dimension.

     \subsection{DeMarco--Mavraki Conjecture}

We follow \cite{DM24} to introduce the necessary notations. An algebraic family of endomorphisms of $\P^n$ of degree $d$ is a morphism $$\Phi:  S \times \P^n \to S \times \P^n$$
given by $\Phi(s,z) =(s, f_s(z)) $ where $f_s$ is an endomorphism of $\P^n$ of degree $d$. Let $\mathcal{X} \subseteq S \times \P^n$ denote a closed irreducible subvariety which is flat over a Zariski open subset of $S$. We use $\mathbf{X}$ to denote the generic fiber of $\mathcal{X}$ and let $\mathbf{\Phi} : \mathbf{P}^n \to \mathbf{P}^n$ be the map induced by $\Phi$, viewed as an endomorphism over the function field $\C(S)$.

We say $\mathcal{X}$ is $\Phi$-special if there exists
 a subvariety $\mathbf{Z} \subseteq \mathbf{P}^n$ over the algebraic closure $\C(S)$ containing the generic fiber $\mathbf{X}$, a
 polarizable endomorphism $\mathbf{\Psi} : \mathbf{Z} \to \mathbf{Z}$, and a positive integer $n$ such that the following hold:
\begin{itemize}
    \item $\mathbf{\Phi}^n(\mathbf{Z}) = \mathbf{Z}$;
    \item $\mathbf{\Phi}^n \circ \mathbf{\Psi} = \mathbf{\Psi} \circ \mathbf{\Phi}^n$ on $\mathbf{Z}$; and
    \item $\mathbf{X}$ is preperiodic under $\mathbf{\Psi}$.
\end{itemize}

We denote $r_{\Phi,\mathcal{X}}$ the relative special dimension of $\mathcal{X}$ over $S$. This is given by 
$$ r_{\Phi, \mathcal{X}} \coloneq \min\{\dim_S\mathcal{Y} : \mathcal{X} \subseteq \mathcal{Y} \text{ and } \mathcal{Y} \text{ is $\Phi$-special}\},$$
where $\dim_S \mathcal{Y} = \dim \mathcal{Y} - \dim S$ is the dimension of a generic fiber of $\mathcal{Y}$ over $S$.

With the notations from above, DeMarco and Mavraki proposed the following relative version of the Dynamical Manin--Mumford Conjecture: 
\begin{conj}[DeMarco--Mavraki Conjecture]\label{conj: DM24-conj-1,1}
     Let $\Phi : S \times \P^N \to S \times \P^N$ be an algebraic family of morphisms of degree $>1$, and let $\mathcal{X} \subseteq S \times \P^N$ be a complex, irreducible subvariety which is flat over $S$. The
 following are equivalent:
 \begin{itemize}
     \item $\mathcal{X}$ contains a Zariski dense set of $\Phi$-preperiodic points.
     \item $\hat{T}^{r_{\Phi.\mathcal{X}}}_\Phi \wedge [X] \neq 0 $ for the relative special dimension $r_{\Phi,\mathcal{X}}$.
     \end{itemize}
\end{conj}
 Here $\hat{T}_\Phi$ is the canonical Green current associated to $\Phi$ on $S \times  \P^N$. Note that the implication from non-vanishing of the current to Zariski density of preperiodic points is proved in \cite[Theorem 1.5]{DM24}. So, to resolve the conjecture, one only needs to focus on the other direction.

\subsection{PCF Quadratic Polynomial Skew Products}
Motivated by the study of the distribution of PCF rational functions in the moduli space of rational functions in one dimension, we study the distribution of PCF endomorphisms in the moduli space of regular polynomial skew products. We obtain a detailed description of families of quadratic regular polynomial skew products with a Zariski dense set of PCF endomorphisms contained in it. 

Since every quadratic regular polynomial skew products can be conjugated to the form 
$$ F(x,y) = (x^2 +d, y^2 + ax^2 + bx + c)$$
by an affine linear transformation on $\A^2$, the moduli space of the quadratic regular polynomial endomorphisms can be naturally identified as $\A^4$; i.e.,
$$ \{F(x,y) = (x^2 + d, y^2 + ax^2 + bx + c) : a,b,c,d \in \C\} \cong \A^4_\C.$$

We establish the following theorem regarding the families containing a Zariski dense set of PCF endomorphisms: 

\begin{thm}\label{thm: pcf-subvariety}
    Let $\mathcal{M}$ denote the moduli space of conjugacy classes of degree-$2$ polynomial skew products, where each class admits a representative of the form
\[
F(x,y) = (x^2 + d,\; y^2 +ax^2+ bx + c),
\qquad d, a, b,c \in \C,
\]
so that $\mathcal{M}$ is naturally identified with $\A^4$.
Let $W \subseteq \A^4$ be an irreducible Zariski closed subset of dimension at least $1$.
If $W$ contains a Zariski dense set of post-critically finite (PCF) points, then $W$ lives in the exceptional locus
\[
\bigl( V(b) \cap V(a) \bigr)\ \cup\ \bigl(V(a) \cap V(d)\cap V(c)\bigr)\ \cup \bigl(V(b) \cap V(c) \cap V(d) \bigr).
\]

\end{thm}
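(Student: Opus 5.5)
The plan is to translate PCF for $F=(p(x),q(x,y))$, with $p(x)=x^2+d$ and $q(x,y)=y^2+ax^2+bx+c$, into two separate conditions and analyse each one. The critical locus of $F$ on $\A^2$ is $\{x=0\}\cup\{y=0\}$, so $F$ is PCF exactly when both of the following hold:
\begin{enumerate}
\item[(i)] $0$ is preperiodic for $p$;
\item[(ii)] the critical curve $C_0=\{y=0\}$ is preperiodic under $F$, i.e. the curves $C_n=F^n(C_0)$ take only finitely many values.
\end{enumerate}
The vertical line $\{x=0\}$ maps to vertical lines, so (i) already handles it. Condition (i) forces $d$ to lie in the countable set of PCF parameters of $x^2+d$; in particular every PCF point of $W$ has $d\in\overline{\Q}$.

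For (ii) I will compute the orbit of $C_0$ explicitly. First, $C_1=\{(u,\,a(u-d)+b\sqrt{u-d}+c)\}$, so $C_1$ is a graph over $u$ if $b=0$ and a genuine double cover of the $u$-line if $b\neq 0$. In general $C_n$ is the image of the $x$-line under $x\mapsto (p^n(x),Q_n(x))$ with $Q_{n+1}(x)=Q_n(x)^2+q(p^n(x),0)$, and $\deg_x Q_n=2^n$.

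The key step is to show that preperiodicity of $C_0$ forces one of two situations. Either $C_n$ eventually becomes a graph over the base, which needs $b=0$; or the covering degree of $C_n\to\A^1_u$ stays bounded while the base map $x\mapsto p^n(x)$ has degree $2^n$. In the second case $Q_n$ must factor through $p^{n}$ up to a bounded-degree correction. I would compare the leading Puiseux expansion of $C_n$ at infinity with the B\"ottcher coordinate of $p$. The first-order term in this expansion is $b\,x$ measured against $x^2+d$, and I expect it to force $d=0$ and $c=0$ whenever $b\neq0$, and also $a=0$; after that the map is $(x^2,y^2+bx)$.

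When $b=0$ the curves $C_n$ are graphs $y=\varphi_n(u)$ with $\varphi_{n+1}(p(u))=\varphi_n(u)^2+a\,u^2+c$. Finiteness of this sequence of polynomials should give either $a=0$ (the split case) or $c=d=0$, in which case $F=(x^2,y^2+ax^2)$ is homogeneous.

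This gives a pointwise description. The difficulty is that $W$ only contains a Zariski dense set of PCF points, not only PCF points. So the second step is an unlikely-intersection argument: the pointwise conditions above must be made to say something about $W$ as a whole. Suppose $W$ is not contained in the exceptional locus; then it meets the complement of one of the three components in a dense open set.

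On $W$ I will consider the two critical heights $h_1(w)=\hat h_{p_w}(0)$ and $h_2(w)=\hat h_{F_w}(C_0)$, the latter being a suitably defined height of the critical curve. The points of $W$ where both heights vanish are Zariski dense. The plan is to apply arithmetic equidistribution, in the style of Baker--DeMarco and Favre--Gauthier, to the one-parameter slices of $W$. Equidistribution forces the bifurcation currents attached to $x=0$ and to $C_0$ to be proportional on $W$. Combining this proportionality with the degree and Puiseux analysis above, carried out over the function field $\C(W)$, should reproduce the algebraic relations $b=a=0$, or $a=c=d=0$, or $b=c=d=0$ generically on $W$, and hence everywhere on $W$.

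I expect the main obstacle to be the classification of preperiodic critical curves when $b\neq0$. This requires ruling out bounded covering degree of $C_n$ unless $a=c=d=0$. I would attack it first over $\C(W)$ using valuations at infinity: compare the growth of $\deg Q_n$ against the degree of $p^n$ and track the exact coefficient obstruction coming from $b\,x$.
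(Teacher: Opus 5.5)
Your proposal has a genuine gap: neither of its two steps is actually carried out, and each rests on claims that fail.

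\textbf{The pointwise step.} The description for $b=0$ is wrong beyond $C_1$. The curve $C_2$ is the image of $u\mapsto(u^2+d,\varphi_1(u)^2+au^2+c)$ with $\varphi_1(u)=a(u-d)+c$. This is a graph over the base only if $\varphi_1$ is even or odd, i.e.\ only if $a=0$ or $c=ad$. For instance, $F=(x^2,y^2-x^2-1)$ gives $C_1=\{y=-x-1\}$ but $C_2=\{y^2=4x\}$. So the recursion $\varphi_{n+1}\circ p=\varphi_n^2+au^2+c$ is not available.

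Likewise, ``eventually a graph, which needs $b=0$'' is not the right dichotomy. Suppose $ab\neq0$ and $0$ is strictly preperiodic for $z\mapsto z^2+a$, the map induced on $H_\infty$. Then the periodic curves in the orbit of $\{y=0\}$ must be lines $y=kx+l$ with $2kl+b=0$, by transversality at a non-superattracting point of $H_\infty$ (\cite[Lemma 5.11]{Xie23}). These are graphs with $b\neq 0$, and excluding them is most of the work in the paper.

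The logic is also off. If you could show that every individual PCF map lies in the exceptional locus, the theorem would be immediate because that locus is closed, and your second step would be unnecessary. The paper proves nothing pointwise; it obtains the relations only after exploiting density.

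\textbf{The unlikely-intersection step.} This does not work as described either. The height and current attached to $\{x=0\}$ are those of the critical point of $x^2+d$, so they depend on $d$ alone. On any $W$ along which $d$ is constant they vanish identically. This includes the genuinely exceptional family $(x^2,y^2+bx)$, where PCF maps are dense. There is then no non-degenerate measure to equidistribute to, so ``proportional currents'' cannot be the mechanism. Moreover, for $\dim W\ge 2$ a Zariski dense set need not meet any chosen curve in $W$ infinitely often. Finally, the passage from proportional currents to $a=b=0$, $a=c=d=0$ or $b=c=d=0$ would require a Baker--DeMarco type rigidity for a marked \emph{curve}, and you only assert it.

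\textbf{The missing idea.} You need a way to produce enough independent one-dimensional constraints. The paper extracts them directly from PCF of $F$:
\begin{itemize}
\item $x^2+d$ is PCF.
\item $z^2+a$ is PCF, via $F|_{H_\infty}$.
\item $z^2+ad_1^2+bd_1+c$ and $z^2+a(1-d_1)^2+b(1-d_1)+c$ are PCF. This comes from the invariant vertical fibres over the fixed points $d_1$ and $1-d_1$ of $x^2+d$, where the points $(d_i,0)\in\{y=0\}$ must have finite orbit.
\item When $a=0$, $z^2+b$ is PCF, by a weighted-degree argument.
\end{itemize}
After passing to the cover $d=d_1(1-d_1)$, these conditions map $W$ onto a subvariety of $\A^4$ with a Zariski dense set of special points. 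The Andr\'e--Oort theorem for products of quadratic families, \cite[Theorem 1.2]{DM25}, then forces equations of the form $x_i=D$ and $x_i=x_j$. A case analysis finishes the proof. The case $ab\neq0$ additionally needs the explicit exclusion of the superattracting case, and the periodic-line constraint $2kl+b=0$ combined with \cite{GNY19}.
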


Let $W$ be an affine subvariety of $\A^{4}$. Define a family of quadratic
regular polynomial endomorphisms
\[
\Phi \colon W \times \P^{2} \longrightarrow \P^{2}
\]
by
\[
\Phi(t,[x:y:z]) = F_{t}([x:y:z]),
\]
where $t=(a,b,c,d)\in W\subseteq \A^{4}$ and
\[
F_{t}([x:y:z]) = \bigl[x^{2}+dz^{2} : y^{2}+ax^{2}+bxz+cz^{2} : z^{2}\bigr].
\]
Let
\[
\mathcal{C} \coloneq V(x)\cup V(y)\subseteq W\times \P^{2}
\]
denote the family of critical components of $\Phi$.

From the perspective of the dynamics of families of regular polynomial
endomorphisms, Theorem~\ref{thm: pcf-subvariety} shows that if there
exists a Zariski dense set of parameters $t_{0}\in W$ such that
$\mathcal{C}_{t_{0}}$ is preperiodic under $\Phi_{t_{0}}$, then $W$
must be special, in the sense that
\[
W \subseteq \bigl( V(b) \cap V(a) \bigr)\ \cup\ \bigl(V(a) \cap V(d)\cap V(c)\bigr)\ \cup \bigl(V(b) \cap V(c) \cap V(d) \bigr).
\]

In \cite{Zho25}, the author studied in depth the dynamics of families of
curves under regular polynomial endomorphisms. In particular, in the
case where the fibers of $\Phi$ are constant and equal to a fixed
regular polynomial endomorphism $F$, if a family $\mathcal{X}$ contains
a Zariski dense set of periodic curves, then it has been shown that the family itself must be
periodic: there exists $m\in \N$ such that $F^{m}(C)$ again belongs to
the family for every curve $C$ in $\mathcal{X}$
\cite[Theorem~1.3]{Zho25}.

In the more general setting, where the family $\Phi$ is not assumed to
be constant, one is led to the following conjecture, which is implied
by Conjecture~\ref{conj: DM24-conj-1,1} (see \cite[Lemma~5.5]{Zho25}).

\begin{conj}{\cite[Conjecture 5.4]{Zho25}}\label{conj-const-1.2}
Let $S$ be a smooth and irreducible quasi-projective variety defined over $\C$, 
and let $K$ be a positive integer. 
Let $\Phi : S \times \P^K \to S \times \P^K$ be a family of endomorphisms such that for every point $(s, p) \in S \times \P^K$,
\[
\Phi(s,p) = (s, F_s(p)),
\]
where $F_s$ is an endomorphism of $\P^K$ of degree greater than $1$. 
Let $\mathcal{X} \subseteq S \times \P^K$ be an irreducible subvariety that projects dominantly onto $S$, 
is flat over $S$, and such that $\mathcal{X}_s$ is a preperiodic subvariety under $F_s$ for a Zariski dense set of $s \in S$. 
Then for any positive integer $N$, we have
\[
\hat{T}^{\,r_{\Phi^{\times N}, \mathcal{X}^N}}_{\Phi^{\times N}} \wedge [\mathcal{X}^N] \neq 0.
\]
\end{conj}

As a corollary, Theorem \ref{thm: pcf-subvariety} implies the following special cases of Conjecture \ref{conj-const-1.2}.

\begin{cor}\label{cor: pcf-subv-imply-conj}
  Let $S$ be a smooth, irreducible quasi-projective variety defined over
$\C$. Let
\[
\Phi \colon S \times (\P^{2})^{2} \longrightarrow S \times (\P^{2})^{2}
\]
be a family of endomorphisms such that, for every
$(s,p_{1},p_{2}) \in S \times (\P^{2})^{2}$,
\[
\Phi(s,p_{1},p_{2}) = \bigl(s, F_{s}(p_{1}), F_{s}(p_{2})\bigr),
\]
where for each $s \in S$, the map $F_{s}$ is a regular quadratic
polynomial skew product.

Let
\[
\mathcal{C} \coloneq \mathcal{C}_{x} \times_{S} \mathcal{C}_{y}
\;\subseteq\; S \times (\P^{2})^{2}
\]
be the irreducible subvariety such that, for each $s \in S$, the fibers
$\mathcal{C}_{x,s}$ and $\mathcal{C}_{y,s}$ are the two irreducible
components of the critical locus of $F_{s}$. Suppose that
$\mathcal{C}_{s}$ is preperiodic under $\Phi_{s}$ for a Zariski dense
set of parameters $s \in S$.

Then, for any positive integer $N$, we have
\[
\hat{T}^{\, r_{\Phi^{\times N}, \mathcal{X}^{N}}}_{\Phi^{\times N}}
\;\wedge\; [\mathcal{C}^{N}] \;\neq\; 0.
\]
\end{cor}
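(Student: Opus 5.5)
The plan is to use Theorem~\ref{thm: pcf-subvariety} to pin down the image of $S$ in the moduli space, and then, on each of the three exceptional families, to show that $\mathcal{C}^N$ is itself $\Phi^{\times N}$-special. Its relative special dimension then equals its relative dimension $r=\dim_S \mathcal{C}^N = 2N$. After that, non-vanishing of $\hat T^{r}\wedge[\mathcal{C}^N]$ reduces to positivity of the fiberwise mass of the top power of the Green current on $\mathcal{C}^N_s$. That mass is $\deg$-positive because $\hat T$ restricts on each fiber to the Green current of a polarized endomorphism.

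\textbf{Step 1: reduce to the exceptional locus.} Consider the classifying map $\pi\colon S\to\A^4$, $s\mapsto(a,b,c,d)$, after conjugating each $F_s$ to normal form; this may require a finite base change. Preperiodicity of $\mathcal{C}_s$ means that both critical lines $V(x)$ and $V(y)$ are preperiodic, i.e.\ $F_s$ is PCF. Indeed, for a skew product, preperiodicity of $V(x)$ and $V(y)$ is exactly postcritical finiteness. So $W=\overline{\pi(S)}$ contains a Zariski dense set of PCF points. If $\dim W\ge1$, Theorem~\ref{thm: pcf-subvariety} places $W$ in $(V(a)\cap V(b))\cup(V(a)\cap V(c)\cap V(d))\cup(V(b)\cap V(c)\cap V(d))$. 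If $W$ is a point, the family is isotrivial; that point must be PCF, and I treat this case like the constant case of \cite{Zho25}.

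\textbf{Step 2: exhibit specialness in each case.}
\begin{itemize}
\item On $V(a)\cap V(b)$ the map is split, $F=(x^2+d,\,y^2+c)$. The critical lines $x=0$ and $y=0$ are products, and their preperiodicity follows from the structure of $F$.
\item On $V(b)\cap V(c)\cap V(d)$ we have $F=(x^2,\,y^2+ax^2)$, which is homogeneous.
\item On $V(a)\cap V(c)\cap V(d)$ we have $F=(x^2,\,y^2+bx)$.
\end{itemize}
In each case I would take $\mathbf{Z}=\mathbf{X}=\mathcal{C}^N$ over the function field and choose $\mathbf{\Psi}$ to be a suitable iterate of $\mathbf{\Phi}^{\times N}$ restricted to $\mathbf{Z}$. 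This needs $\mathbf{\Phi}^{m}(\mathbf{Z})=\mathbf{Z}$ generically, and that would follow by showing that the critical lines are \emph{generically} preperiodic. For the homogeneous family and for $(x^2,y^2+bx)$, the line $x=0$ is invariant. The image $F(\{y=0\})$ is the curve $y=ax^2$, respectively $y=bx$, and each of these is invariant up to reparametrization by the scaling symmetries. Without that symmetry, the relative special dimension need not equal $2N$; in that case I would instead bound it below, by computing $r$ as the dimension of the smallest special family containing $\mathcal{C}^N$.

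\textbf{Step 3: the current computation.} Once $r$ is identified, I would check $\hat T^{r}\wedge[\mathcal{C}^N]\neq0$ by slicing over a generic point of $S$. The fiberwise restriction is $\bigwedge \hat T_{F_s}$ on a product of lines, whose mass is a positive intersection number $\prod \deg>0$. This holds because $T_{F_s}$ is cohomologous to the hyperplane class.

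The main obstacle is Step 2 in the split and general-parameter cases. There the critical components are not generically preperiodic: for $y^2+c$ with $c$ varying, preperiodicity holds only at special $c$. So $\mathcal{C}^N$ is not itself special, and $r$ must be computed honestly. That requires showing that no proper intermediate special family contains $\mathcal{C}^N$, i.e.\ that the dimension left over after the PCF constraint is accounted for by the parameter directions. For this I would appeal to the one-dimensional results (e.g.\ \cite[Theorem~3.5]{DM24}) on each coordinate factor, and combine them using the product structure.
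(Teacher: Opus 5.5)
There is a genuine gap in Steps~2 and~3. It affects every non-isotrivial family, not just the split one you flag at the end.

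Your images of $V(y)$ are miscomputed:
\begin{itemize}
\item For $F=(x^2,\,y^2+ax^2)$ one has $F(x,0)=(x^2,ax^2)$, so $F(V(y))$ is the line $y=ax$. $F$ maps the pencil $\{y=kx\}$ to itself via $k\mapsto k^2+a$.
\item For $F=(x^2,\,y^2+bx)$ one has $F(x,0)=(x^2,bx)$, so $F(V(y))$ is the conic $y^2=b^2x$. $F$ maps the pencil $\{y^2=kx\}$ to itself via $k\mapsto (k+b)^2$, which is conjugate to $w\mapsto w^2+b$.
\end{itemize}
In both cases the orbit of $V(y)$ is the orbit of the critical point $0$ of a quadratic polynomial with varying parameter. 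So $V(y)$ is preperiodic only at PCF parameters, and it is \emph{not} preperiodic over the function field of $S$ once $a$ (resp.\ $b$) is nonconstant. No scaling symmetry changes this, because taking $\mathbf{Z}=\mathbf{X}$, as you propose, literally requires $\mathbf{X}$ to be (pre)periodic under $\mathbf{\Phi}^{\times N}$.

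Hence in all three non-isotrivial cases $\mathcal{C}^N$ is not special, and you must work with some $R>2N=\dim_S\mathcal{C}^N$. Your Step~3 cannot handle this. A fibrewise degree count only controls $\hat T^{2N}\wedge[\mathcal{C}^N]$. Non-vanishing of $\hat T^{R}\wedge[\mathcal{C}^N]$ for $R>2N$ requires positivity of the Green current in $R-2N$ parameter directions along $\mathcal{C}^N$, and no cohomological computation on the fibres supplies that.

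Two ingredients are missing.

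\textbf{Upper bound on $r$.} You only need an upper bound, since $\hat T^{R}\wedge[\mathcal{C}^N]=\hat T^{R-r}\wedge(\hat T^{r}\wedge[\mathcal{C}^N])$. Ruling out intermediate special families, as you propose, is therefore unnecessary. The bound comes from an explicit $\Phi^{\times N}$-invariant (hence special) family containing $\mathcal{C}^N$:
\begin{itemize}
\item In the homogeneous case, take odd factors in $V(x)$ and all even factors on a common member of the invariant pencil $\{y=kx\}$; this has relative dimension $2N+1$.
\item In the $(x^2,y^2+bx)$ case, do the same with the pencil $\{y^2=kx\}$; again relative dimension $2N+1$.
\item In the split case, take the locus where all odd factors share their $x$-coordinate and all even factors share their $y$-coordinate, cut down by the orbit closure of $(0,0)$ under $(z^2+d,\,w^2+c)$. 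This has relative dimension at most $2N+\dim\tau(S)$.
\end{itemize}

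\textbf{Non-vanishing at that $R$.} This is a bifurcation statement, not a degree count. Push forward via the projection formula along the fibration to its base: the pencil parameter $k$, or the coordinates $(x_1,y_2)$ in the split case. There the system becomes a family of quadratic polynomials with marked critical point $0$.
\begin{itemize}
\item The current then dominates $dd^c_s G(s,0,\dots,0)$. This is nonzero because $s\mapsto G(s,0,\dots,0)$ is nonconstant, subharmonic and bounded below, by non-isotriviality.
\item When $\dim\tau(S)=2$ in the split case, it dominates $\mu_{\mathrm{bif}}\wedge\mu_{\mathrm{bif}}$, which gives mass $1$ to the product of Mandelbrot sets.
\end{itemize}

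Appealing to one-dimensional DeMarco--Mavraki results factor by factor does not replace this argument. The product structure is also unavailable for the homogeneous and $(x^2,y^2+bx)$ families.

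Your Step~1 and your treatment of the isotrivial case are fine.
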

 \begin{rmk}
    It is not difficult to see that, via the Segre embedding, we may view the 
dynamics as being defined on $\P^K$ for some suitable $K \in \N$. 
Thus the corollary reduces to the setting of Conjecture~\ref{conj-const-1.2}.
 \end{rmk}

The key argument follows the same strategy as in
\cite[Theorem~1.3]{Zho25}. One verifies that there exists a family of
curves that contains the orbit $\Orb_{F_s}(\mathcal{C}_s)$ for all
$s \in S$ and that is invariant under $F_s$ for every $s \in S$. Then our dynamical pair $(F_s, \mathcal{C}_s)$ induces a dynamical pair $(f_s,p_s)$ on this invariant family where each point $p_s$ represents a curve, $\mathcal{C}_s$, and $f_s(p_s)$ represents $F_s(\mathcal{C}_s)$. We show that the condition \[
\hat{T}^{\, r_{\Phi^{\times N}, \mathcal{X}^{N}}}_{\Phi^{\times N}}
\;\wedge\; [\mathcal{C}^{N}] \;\neq\; 0.
\] is reduced to a similar current condition on $(f_s, p_s)$ which is either one-dimension or a marked pair on $\P^1 \times \P^1$.

To the best of our knowledge, this provides the first known instance,
in essentially dimension greater than one dynamics, of Conjecture~\ref{conj-const-1.2} in
which the endomorphisms are allowed to vary in the family and are not
restricted to split morphisms.

\subsection{Preperiodic points associated to  a one-parameter family of polynomial skew products}

Let $M$ be any complex manifold. Following Astorg-Bianchi \cite{AB22, AB23}, we consider a collection $(F_{t})_{t\in M}$ of holomorphic family of polynomial skew products of degree $d\geq 2$. In other words, a holomorphic map  $F:M\times \mathbb{C}^2\rightarrow\mathbb{C}^2$ such that $F_t:=F(t,\cdot)$ is a polynomial skew product of degree $d$ for all $t\in M$. 

In this subsection, we particularly interested in a family of polynomial skew products of $\mathbb{C}^2$, extendable to $\mathbb{P}^2$, of the form 
$$F_t(x,y):=F(t,x,y)=(f_t(x),g_t(x,y))$$ where $f_t:\mathbb{C}\rightarrow\mathbb{C}$ is a degree $d$ polynomial and $g_t(x,\cdot)$ is also a degree $d$ polynomial for every $x\in\mathbb{C}$ and for all $t\in M$. We refer the reader to subsection \ref{sec:setupactivitybifurcation} for activity and bifurcation discussion pertaining to $F_t$.

In \cite{HK18}, Hsia-Kawaguchi  asserted that---under some natural assumptions on $P_t, Q_t\in \A^2({K[t]})$ over a number field $K$---the set of parameters $t\in \overline{K}$ for which both $P_t$ and $Q_t$ are periodic under the action of one-parameter families H$\acute{\text{e}}$non maps $H_t(x,y)=(y+x^2+t,x)$ is infinite if $P_t$ and $Q_t$ are ``related dynamically" (see \cite[Theorem G]{HK18} more precise description).	 Inspired by the work of Hsia-Kawaguchi, we prove an analogy for one-parameter families of polynomial skew products defined over a number field. Let us consider a one-parameter family of polynomial skew product
$F_t: \A^2\rightarrow\A^2$ parameterized by $t\in \overline{K}$ of degree $d\geq2$:
$$F_t(x,y)=(f_t(x),g_t(x,y))$$ where $f(x)=x^d+O(x^{d-1})\in K[x,t]$ and $g_t(x,y)=y^d+O(y^{d-1})\in K[x,y,t]$. 
Given an initial point $P_t:=(a(t),b(t))\in \mathbb{A}^2(K[t])$ satisfying a natural condition described in subsection \ref{sec:setupactivitybifurcation}.

Using the theory of adelic line bundle for quasi projective varieties recently developed by Yuan-Zhang \cite[Chapter 6]{YZ26}, we obtain a height function $h_{P_t}: \overline{K}\rightarrow\mathbb{R}$. This is a good height function for our context in the sense that it detects parameters $t$ such that $P_t$ is $F_t$-preperiodic.
Denote by $$\text{Prep}(P_t):=\{t\in \overline{K}: f_t^m(P_t)=f_t^n(P_t)\,\,\,0\leq m<n\}$$ the set of all algebraic parameters $t$ so that the forward orbit of $P_t$ under $F_t$ is finite.  
Interestingly, the set  $\text{Prep}(P_t)$ is not always infinite (cf. Remark \ref{rem:exampleofsetprep}).
In order to state our result, let $P_t, Q_t\in \mathbb{A}^2({K[t]})$ be two initial points so that  both $\text{Prep}(P_t)$ and $\text{Prep}(Q_t)$ are infinite. Furthermore, $P_t$ and $Q_t$ satisfy an active assumption in subsection \ref{sec:setupactivitybifurcation}. 
Our main result is stated as follows:
\begin{thm}\label{thm:unlikelytwoinitialpoints} Let $P_t$ and $Q_t$ be as above. Then the following are equivalent:
\begin{enumerate}
\item[(a)] $\text{Prep}(P_t)\cap\text{Prep}(Q_t)$ is infinite;
\item[(b)] $\text{Prep}(P_t)=\text{Prep}(Q_t)$;
\item[(c)] $h_{P_t}=h_{Q_t}$.
\end{enumerate}
\end{thm}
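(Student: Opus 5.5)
The argument follows the standard Baker--DeMarco / Ghioca--Hsia--Tucker template, with the quasi-projective adelic line bundle machinery of Yuan--Zhang \cite[Chapter 6]{YZ26} used to produce the height $h_{P_t}$. The trivial implications are (b)$\Rightarrow$(a) and (c)$\Rightarrow$(b). For (b)$\Rightarrow$(a), $\text{Prep}(P_t)$ is infinite by hypothesis. For (c)$\Rightarrow$(b), we use that $h_{P_t}(t_0)=0$ if and only if $t_0\in\text{Prep}(P_t)$. One direction is immediate: if $P_{t_0}$ is preperiodic, its canonical height under $F_{t_0}$ vanishes at every place. The converse needs a Northcott-type property. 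A parameter $t_0$ with $h_{P_t}(t_0)=0$ has $\hat h_{F_{t_0}}(P_{t_0})=0$. Since $F_{t_0}$ is a regular polynomial endomorphism of $\P^2$ over the number field $K(t_0)$, the Call--Silverman canonical height satisfies Northcott, and this forces preperiodicity. The substantive part is therefore (a)$\Rightarrow$(c).

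To prove (a)$\Rightarrow$(c), I would first realize $h_{P_t}$ as the height attached to an adelic line bundle on the parameter curve. Set $\overline{L}_{P}:=\lim_n d^{-n}(F^n_t(P_t))^*\overline{\mathcal{O}(1)}$, the pullback of $\mathcal{O}(1)$ on $\P^2$ along the section $t\mapsto F_t^n(P_t)$, renormalized. For each place $v$, the limit exists as a continuous semipositive metric on $\mathcal{O}(\deg)$ over $\P^1$ (with $t=\infty$ as a boundary point), via a telescoping estimate of the form $|\log\|F(z)\|_v - d\log\|z\|_v|\le C_v(t)$. Because $F_t$ is a skew product, these local potentials split. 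The first coordinate gives the one-dimensional Green function $G_{f_t,v}(a(t))$. The second gives a fibered Green function $G_{g,v}$ along the orbit of $a(t)$. I would use this splitting to show that the metrics are continuous near $t=\infty$, which is where the degree/active hypotheses of subsection \ref{sec:setupactivitybifurcation} enter. The output is an integrable, nef adelic line bundle $\overline{L}_P$ of positive degree (by activity), with $h_{\overline{L}_P}=h_{P_t}$. The same construction gives $\overline{L}_Q$.

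Next I would apply arithmetic equidistribution of small points (Yuan, or Yuan--Zhang's quasi-projective version) to the infinite set $\text{Prep}(P_t)\cap\text{Prep}(Q_t)$. Any infinite sequence of distinct Galois orbits there is small for both $\overline{L}_P$ and $\overline{L}_Q$. The first requirement is that the self-intersection $\overline{L}_P^2$ vanishes. This holds because $\text{Prep}(P_t)$ is infinite and $h_{P_t}\ge 0$, so Zhang's successive minima inequality gives $\overline{L}_P^{2}=0$, and likewise for $Q$. Equidistribution then gives, at every place $v$, $\mu_{P,v}=\mu_{Q,v}$ after normalizing by degrees, where $\mu_{P,v}=dd^c$ of the local potential; these are the bifurcation measures on the Berkovich line. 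Two subharmonic potentials on $\P^{1,\mathrm{an}}_v$ with the same Laplacian differ by a constant. Hence the normalized metrics of $\overline{L}_P$ and $\overline{L}_Q$ differ by a constant $c_v$ at each place, with $c_v=0$ for all but finitely many $v$.

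Finally, I would show that the constants vanish and then deduce $h_{P_t}=h_{Q_t}$. Evaluating at a common point of $\text{Prep}(P_t)\cap\text{Prep}(Q_t)$, where both local heights sum to zero, gives $\sum_v c_v=0$. Summing the local relations then gives $h_{P_t}=\lambda h_{Q_t}$, where the factor $\lambda=\deg L_P/\deg L_Q$ comes from comparing degrees. The remaining task is $\lambda=1$, together with the compatibility of normalizations. I would obtain this from the asymptotics at $t=\infty$: the local potential at an archimedean place grows like $\deg(L_P)\log|t|$, and the degree hypotheses pin down this leading term. I expect the main obstacle to be the construction step, namely proving that the limiting metric is continuous (and hence that $\overline{L}_P$ is a genuine adelic line bundle) near the cusp $t=\infty$ and at the parameters where the fiber dynamics degenerate. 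For that I would first try the skew-product splitting, reducing to uniform estimates for $f_t$ (known from Ghioca--Hsia--Tucker) together with uniform-in-$x$ estimates for $g_t(x,\cdot)$, using that $g$ is monic in $y$.
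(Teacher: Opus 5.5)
\textbf{The gap is your final step, $\lambda=1$, and it cannot be filled because it is false.} This theorem has no degree hypotheses. Those belong to Theorem~\ref{thm:mainpolyskewinsubvar}; subsection~\ref{sec:setupactivitybifurcation} only imposes activity. Nothing forces $\deg_{\widetilde{L}_{\mathrm{par},P_t}}(\A^1)=\deg_{\widetilde{L}_{\mathrm{par},Q_t}}(\A^1)$.

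Take any admissible $P_t$ and set $Q_t:=F_t(P_t)$. Then $Q_t$ is again active and $\text{Prep}(Q_t)=\text{Prep}(P_t)$. Yet $h_{Q_t}(t_0)=\hat h_{F_{t_0}}(F_{t_0}(P_{t_0}))=d\,h_{P_t}(t_0)$, or equivalently $F^*\overline{L}_F=d\,\overline{L}_F$.

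A concrete instance: $F_t(x,y)=(x^2+t,y^2)$, $P_t=(0,0)$, $Q_t=(t,0)$. Both sets $\text{Prep}$ equal the infinite set of postcritically finite parameters of $x^2+t$, so (a) and (b) hold. But $h_{Q_t}=2h_{P_t}$ and $h_{P_t}(1)=\hat h_{x^2+1}(0)>0$, so (c) fails. The archimedean potentials grow like $\tfrac12\log|t|$ and $\log|t|$ respectively, so no asymptotic analysis at $t=\infty$ can make them agree.

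What your argument actually proves is $h_{P_t}=\lambda h_{Q_t}$ with $\lambda=\deg_{\widetilde{L}_{\mathrm{par},P_t}}(\A^1)/\deg_{\widetilde{L}_{\mathrm{par},Q_t}}(\A^1)>0$, i.e.\ equality of degree-normalized heights. If (c) is replaced by this statement, your (a)$\Rightarrow$(c) is complete. Your (c)$\Rightarrow$(b) goes through unchanged, since $\lambda>0$.

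The paper follows the same template as you: Yuan--Zhang equidistribution for $i^*\overline{L}_F$, a Calabi-type uniqueness, constants $c_v$ with $\sum_v c_v=0$, and the specialization lemma for (c)$\Rightarrow$(b). However, it obtains (c) by writing $c_1(\overline{L}_{\mathrm{par},P_t})_v=c_1(\overline{L}_{\mathrm{par},Q_t})_v$ directly after equidistribution. That drops the degree normalization in Theorem~\ref{thm:YuanZhangequidistribution}, which is exactly the step you correctly declined to take.

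Two smaller remarks:
\begin{itemize}
\item The paper works with quasi-projective adelic line bundles on $\A^1$, pulling back the invariant bundle of \cite[Theorem 6.1.1]{YZ26}. So the continuity of the metric at $t=\infty$, which you single out as the main obstacle, is never needed. In that setting, ``same Laplacian implies constant difference'' should be justified by the bound $G_v(t)=O(\log|t|)$ from your telescoping estimate, rather than by compactness of $\P^{1,\mathrm{an}}_v$. At archimedean places this is the fact that a harmonic function on $\C$ of logarithmic growth is constant; there is a non-archimedean analogue.
\item Each local Green function vanishes at a preperiodic parameter. So evaluating at a single common parameter already gives $c_v=0$ for every $v$, not just $\sum_v c_v=0$.
\end{itemize}
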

Note that Theorem \ref{thm:unlikelytwoinitialpoints} provides similar statement to that of Hsia-Kawaguchi. However, there are significant details to overcome in our setting. For instance, one needs to understand under which circumstance the set $\text{Prep}(P_t)$ is infinite (cf. \S \ref{sec:prependo}).  In addition, our proof heavily relies on arithmetic equidistribution of Yuan and Zhang. In order to apply it, we need to verify non-degeneracy condition and smallness. For more details, we refer the reader to subsection \ref{sec:unlikelyinter}.

\subsection{Unlikely Intersection and a special case of Conjecture \ref{conj: DM24-conj-1,1}}

With the height function for the family of regular polynomial skew products discussed above, we show that, under some degree restrictions, if a family of regular polynomial skew products with a marked points consisting of a Zariski dense set of preperiodic points, then the marked point is special: 
\begin{thm}\label{thm:mainpolyskewinsubvar}
  
 Let $K$ be a number field and let 
\[
F_t(x,y)=(f_t(x),g_t(x,y))
\]
be a one-parameter family of regular polynomial skew products of degree $d\ge2$, 
with $f_t(x)$ and $g_t(x,y)$ monic polynomials in $K[t][x]$ and $K[t][x,y]$, 
respectively. 

Given a point $P_t := (a(t),b(t)) \in K[t]\times K[t]$ such that
\begin{enumerate}
\item 
$
\deg(a(t)) > \deg_t(f_t);
$

\item $\deg(b(t))$ is positive and
\[
\deg(b(t)) \ge 
\left(
\frac{\operatorname{lcm}(\deg(b(t)),\,\deg(a(t)))}{\deg(b(t))} + 1
\right)
\bigl(\deg(a(t)) + \deg_t(g_t)\bigr).
\]
\end{enumerate}
   Suppose that there are infinitely many $t_0\in \overline{K}$ for which $P_{t_0}$ is preperiodic under the action of $F_{t_0}$. Then the Zariski-closure forward orbit $\overline{\text{Orb}_{F_t}(P_t)}$ is contained in a proper subvariety of $\mathbb{P}^2_{\overline{K}(t)}$. 

\end{thm}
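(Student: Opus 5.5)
Assume the orbit of $P_t$ is Zariski dense in $\mathbb{P}^2_{\overline{K}(t)}$ and aim for a contradiction with the infinitude of $\text{Prep}(P_t)$. The argument has three steps: compute the height $h_{P_t}$ explicitly, apply equidistribution, and then use the degree conditions to force a failure of equidistribution along a curve.

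\emph{Step 1: the height $h_{P_t}$.} Since $F_t$ is a skew product, $h_{P_t}$ splits into a base part and a fiber part. The base part is the one-dimensional height of $a(t)$ under $f_t$. Condition (1), $\deg a>\deg_t f_t$, gives $\deg_t f_t^n(a(t)) = d^n\deg a$ exactly. Hence the $x$-component escapes at every archimedean place as $|t|\to\infty$, and the base bifurcation measure is well controlled.

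\emph{Step 2: the fiber part.} Write $g_t(x,y)=y^d+(\text{lower in }y)$, with the $x$- and $t$-dependence of degree at most $\deg_t g_t$ in $t$ after substituting $x=f_t^n(a)$. Condition (2) is designed so that $\deg_t$ of the $y$-coordinate of $F_t^n(P_t)$ equals $d^n\deg b$, with the leading term coming from $b(t)^{d^n}$. Inductively, the cross terms involving $f_t^{n-1}(a)$ have degree at most $d^{n-1}(\deg a+\deg_t g_t)$ times the lcm-ratio factor, which is strictly smaller. The lcm factor appears because the leading terms could cancel when $d^n\deg b$ and multiples of $\deg a$ coincide. We therefore need the stronger inequality to prevent cancellation in the ramified uniformizer $t^{1/\mathrm{lcm}}$ at infinity. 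From this we get that $h_{P_t}$, which is the arithmetic height from Yuan--Zhang used in Theorem~\ref{thm:unlikelytwoinitialpoints}, is nondegenerate with $\widehat{h}_{P_t}(t_0)=0$ exactly on $\text{Prep}(P_t)$.

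\emph{Step 3: equidistribution and the contradiction.} Infinitely many preperiodic parameters form a generic small sequence, so Yuan--Zhang equidistribution gives that the Galois orbits of these $t_0$ equidistribute to the bifurcation measure $\mu_{P}=dd^c G_P$ on each place. Here $G_P(t)=\lim d^{-n}\log^+\|F_t^n(P_t)\|$. The key point is the local expansion of $G_P$ near $t=\infty$. By Step 2, $G_P(t)=\deg b\log|t|+O(1)$, and the ``fiber escape rate'' $G_P-G_{a}$ is comparable to $(\deg b-\deg a)\log|t|$ and is harmonic near $\infty$. We will then compare with the curve obtained from the orbit. Zariski density means that for a general algebraic relation $\Psi(x,y)=0$ no orbit points satisfy it. In the nondegenerate case, equidistribution together with positivity of $\deg b-\deg a$ forces the function $t\mapsto$ (fiber Green function) to coincide with a harmonic function off the base bifurcation locus. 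Via the rigidity argument of Baker--DeMarco / Ghioca--Hsia--Tucker, this yields a polynomial relation $\Psi(a(t),b(t))$ preserved by iteration, i.e. an invariant curve $\{\Psi=0\}\subset\mathbb{P}^2_{\overline{K}(t)}$ (for instance a fiber $x=$const or a graph $y=h(x)$) containing $\text{Orb}(P_t)$. That contradicts density.

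I expect the main obstacle to be this last rigidity step: converting the equality of local Green functions into an explicit invariant curve. I would first try to show that $\mu_P$ is supported on the base bifurcation set $\partial\mathcal{M}_{a}$ of $a(t)$ under $f_t$. That would force the $y$-coordinate to be passive, after which Böttcher-coordinate comparison in the fiber gives $y$ as an algebraic function of $x$ along the orbit.
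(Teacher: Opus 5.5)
Your outline ends in the right place: Böttcher coordinates producing an algebraic relation. But the step you defer to ``the rigidity argument of Baker--DeMarco / Ghioca--Hsia--Tucker'' is essentially the whole proof, and the proposal does not supply it. In addition, condition (2) is never used where it is actually needed.

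\textbf{Missing: the equality of measures.} You say you would try to show $\mu_P$ lives on the base bifurcation locus, but you give no mechanism. The mechanism is that every $t_0$ with $P_{t_0}$ preperiodic also has $a(t_0)$ preperiodic under $f_{t_0}$. So the same generic small sequence equidistributes to $\mu_{F_t,v}$ (Yuan--Zhang, via $h_{P_t}$) and also to $\mu_{f_t,v}$ (Baker--DeMarco, applied to the base pair $(f_t,a(t))$). Hence $\mu_{F_t,v}=\mu_{f_t,v}$.

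\textbf{Missing: the exact proportionality of Green functions.} From the equality of measures you need $G_2/\deg b=G_1/\deg a$. This comes from equal bounded-orbit sets, uniqueness of the Green function with pole at $\infty$, and comparison of growth at $\infty$. Saying the fiber escape rate ``coincides with a harmonic function off the base bifurcation locus'' is too weak, because the specific ratio $\deg b/\deg a$ is what later matches degrees. After replacing $P_t$ by an iterate so it lies in the vertical Böttcher domain, this proportionality becomes $|\Phi_{a(t)}(b(t))|^{m_1}=|\Phi(a(t))|^{m_2}$. Here $m_1=\lcm(\deg a,\deg b)/\deg b$ and $m_2$ is chosen so the $t$-degrees agree. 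Hence $\xi\,\Phi_{a(t)}(b(t))^{m_1}=\Phi(a(t))^{m_2}$ for a constant $|\xi|=1$, and along the orbit $\xi^{d^n}\Phi_{a_n(t)}(b_n(t))^{m_1}=\Phi(a_n(t))^{m_2}$.

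\textbf{Missing: turning this transcendental identity into an algebraic one.} Two arguments are required.
\begin{enumerate}
\item Since iteration replaces $\xi$ by $\xi^{d^n}$, you must show $\xi$ is a root of unity. The paper does this by comparing the leading coefficient and the first negative-degree coefficient in $t$ on both sides. Then one fixed relation holds along an arithmetic progression $n\equiv i \pmod k$.
\item You must algebraize. Write $\Phi_x(y)=y+\sum_{i\ge1}c_{i,t}(x)y^{-i}$. The functional equation gives $c_{i,t}\in\overline{K}[t][x]$ with $\deg_t c_{i,t}(a(t))\le (i+1)\deg a+i\deg_t g_t$. Condition (2) is exactly what makes every term of $\Phi_a(b)^{m_1}$ containing a negative power of $b$ have negative $t$-degree. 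So $\Phi_a(b)^{m_1}=P(a,b)+o(1)$ with $P$ a polynomial. Together with $\Phi(a)^{m_2}=Q(a)+o(1)$, comparing polynomial parts gives the exact identity $\xi' P(a_n,b_n)=Q(a_n)$ on that progression. This forces $\overline{\Orb_{F_t}(P_t)}$ into the curves $F_t^j(V(\xi'P-Q))$, $0\le j<k$, plus finitely many points.
\end{enumerate}

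\textbf{The role of condition (2).} The factor $\lcm(\deg a,\deg b)/\deg b+1=m_1+1$ is tied to the exponent $m_1$ and controls the Laurent tail above. It is not a device to prevent cancellation in $\deg_t b_n$, as your Step 2 claims. That growth only needs $\deg b>\deg a+\deg_t g_t$.

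\textbf{Why the citation does not cover it.} The one-variable rigidity arguments you cite do not handle this step, because the new input is the degree control of the vertical Böttcher coordinate.

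\textbf{Framing.} The proof by contradiction is unnecessary, since the argument directly produces the curve. Also, Zariski density of the orbit does not mean that no orbit point satisfies a given relation.
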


\begin{rmk}
    The first condition, $\deg(a(t)) > \deg_t(f_t)$, can always be achieved by 
replacing $P_t$ with a point in its forward orbit, unless $a(t)$ is 
preperiodic under $f_t$ or the pair $(a(t),f_t)$ is isotrivial. 
This follows by considering the height of the iterates $a_n(t)$ over the 
function field $K(t)$, which coincides with $\deg(a_n(t))$ for each 
$n \in \mathbb{N}$. If the height of $a_n(t)$ does not grow as 
$n \to \infty$, then the canonical height of $a(t)$ with respect to $f_t$ 
is zero. Consequently, $a(t)$ is either preperiodic or the pair 
$(a(t),f_t)$ is isotrivial.

In either of these cases, however, the theorem becomes trivial. 
Indeed, if one of these two cases holds and there exists $t_0$ such that $a(t_0)$ is preperiodic under 
$f_{t_0}$, then $a(t)$ is preperiodic under $f_t$ as a point in $\P^2_{K(t)}$. 
Hence the Zariski closure of the orbit is automatically contained in a 
proper subvariety of $\mathbb{P}^2$.

Therefore, the essential requirement for the theorem is that, after 
replacing $P_t$ by a sufficiently large iterate in its orbit, 
condition~(2) holds.
\end{rmk}
We also present the following example—distinct from the split and homogeneous
cases—to demonstrate that Theorem~\ref{thm:mainpolyskewinsubvar} is non-vacuous.

\begin{rmk}
Consider the regular polynomial skew product
\[
F_t(x,y) = \bigl(x^{11},\, y^{11} + t y^2 - t x^{11}\bigr),
\]
defined over $\Q[t]$, and the point
\[
P_t = (a(t), b(t)) = (t^2, t^{11}).
\]

Then $P_t$ satisfies the hypotheses of
Theorem~\ref{thm:mainpolyskewinsubvar}, since
\[
\deg_t(b) = 11 > 9
= \left( \frac{\lcm(\deg_t(a), \deg_t(b))}{\deg_t(b)} + 1 \right)
(\deg_t(a) + 1).
\]
Consequently, if there exist infinitely many $t_0 \in \overline{\Q}$ such that
$P_{t_0}$ is preperiodic under $F_{t_0}$, then the orbit closure
$\overline{\Orb_{F_t}(P_t)}$ is a proper $F_t$-invariant subvariety of $\A^2$.

In fact, in this example we have
\[
\overline{\Orb_{F_t}(P_t)} = V(y^2 - x^{11}),
\]
which is invariant under $F_t$. Moreover, for any root of unity $t_0$, the point
$P_{t_0}$ is preperiodic under $F_{t_0}$.
\end{rmk}
\begin{rmk} Note that given $F_t$ is a flat family of regular polynomial skew products 
(i.e., for every $t_0 \in \overline{K}$, the map $F_{t_0}$ extends to an 
endomorphism of $\P^2$ of fixed degree $d>1$), we may assume after conjugation 
that $F_t$ is monic; that is, both $f_t(x)$ and $g_t(x,y) \in K[t,x][y]$ are monic.

Indeed, consider a polynomial skew product of the form
\[
H_t(x,y)=\bigl(k_1x^d+O(x^{d-1}),\; k_2y^d+O_{x,t}(y^{d-1})\bigr).
\]
Then $H_t$ is dynamically conjugate to a monic map via the linear change of 
coordinates
\[
\varphi(x,y)=\bigl(k_1^{1/(d-1)}x,\; k_2^{1/(d-1)}y\bigr).
\]
In other words, the conjugated map $\varphi \circ H_t \circ \varphi^{-1}(x,y)$ 
is monic.\end{rmk}
As a corollary, Theorem \ref{thm:mainpolyskewinsubvar} also proves a special case of Conjecture \ref{conj: DM24-conj-1,1} under the degree constriction:
\begin{cor}\label{cor: main-skew-imply-conj}
   Let $K$ be a number field. Let
\[
\Phi \colon \A^{1} \times \P^{2} \longrightarrow \A^{1} \times \P^{2}
\]
be an algebraic family of regular polynomial endomorphisms of degree
$d>1$, given by
\[
\Phi\bigl(t,[x:y:z]\bigr)
=
\Bigl(t,\,
\bigl[z^{d} f_t(x/z) : z^{d} g_{t}(x/z,y/z) : z^{d}\bigr]\Bigr),
\]
where $f_t(x) \in K[t][x]$ and $g_{t}(x,y) \in K[t][x,y]$.

Let
\[
\mathcal{X} \coloneq \bigl(t, [a(t): b(t):1]\bigr)
\subseteq \A^{1} \times \P^{2}
\]
be a marked point over $\A^{1}$, where $a(t), b(t) \in K[t]$ such that 
$$ \deg(a(t)) > \deg_t(f_t),$$
$$\deg b(t) \;\ge\;
\left(
\frac{\operatorname{lcm}\bigl(\deg b(t), \deg a(t)\bigr)}{\deg b(t)} + 1
\right)
\bigl(\deg a(t) + \deg_t g_t\bigr). $$
Suppose that there exist infinitely many parameters
$t_{0} \in \A^{1}_{\overline{K}}$ such that the specialization
\[
\mathcal{X}_{t_{0}} = \bigl[a(t_{0}): b(t_{0}) : 1\bigr]
\]
is preperiodic under the specialized map
\[
\Phi_{t_{0}}\bigl([x:y:z]\bigr)
=
\bigl[z^{d} f_{t_{0}}(x/z) : z^{d} g_{t_{0}}(x/z,y/z) : z^{d}\bigr].
\]
Then
\[
\hat{T}^{\, r_{\Phi,\mathcal{X}}}_{\Phi} \wedge [\mathcal{X}] \neq 0,
\]
where $r_{\Phi,\mathcal{X}}$ denotes the relative special dimension
associated to $\Phi$ and $\mathcal{X}$.
\end{cor}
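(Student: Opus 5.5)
We deduce Corollary~\ref{cor: main-skew-imply-conj} from Theorem~\ref{thm:mainpolyskewinsubvar} together with the easy implication and the structure of special subvarieties of dimension at most one. Since $\mathcal{X}$ is a section over $\A^1$, its relative special dimension satisfies $r_{\Phi,\mathcal{X}}\in\{0,1,2\}$. The current condition $\hat{T}^{r}_\Phi\wedge[\mathcal{X}]\neq 0$ then concerns only a measure on the curve $\mathcal{X}\cong\A^1$ when $r=1$, is automatic when $r=0$, and, since $\mathcal{X}$ has dimension $1$, cannot hold when $r=2$. The goal is therefore to show that $r_{\Phi,\mathcal{X}}\le 1$, and that if $r_{\Phi,\mathcal{X}}=1$ then the relevant measure is nonzero.

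\textbf{Step 1: bound the special dimension.} By hypothesis there are infinitely many $t_0$ with $\mathcal{X}_{t_0}$ preperiodic. Theorem~\ref{thm:mainpolyskewinsubvar} then says that $\mathbf{Z}:=\overline{\Orb_{\mathbf{\Phi}}(P_t)}$ is a proper subvariety of $\mathbf{P}^2_{\overline{K(t)}}$.

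If $P_t$ is $\mathbf{\Phi}$-preperiodic, we take $\mathbf{\Psi}=\mathbf{\Phi}$ on $\mathbf{P}^2$. The point $\mathbf{X}$ is then itself $\mathbf{\Phi}$-special of dimension $0$, so $r=0$ and the current condition is trivial: $\hat{T}^0\wedge[\mathcal{X}]=[\mathcal{X}]\neq0$.

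Otherwise $\mathbf{Z}$ is a curve, possibly reducible. Passing to an iterate $\mathbf{\Phi}^n$, its irreducible component $\mathbf{Z}_0$ containing $P_t$ is $\mathbf{\Phi}^n$-invariant and contains the Zariski dense orbit of $P_t$ under $\mathbf{\Phi}^n$. Since $\mathbf{\Phi}^n|_{\mathbf{Z}_0}$ is the restriction of a polarized endomorphism, it is polarizable on $\mathbf{Z}_0$ (the normalization is $\P^1$ or an elliptic curve). Taking $\mathbf{\Psi}=\mathbf{\Phi}^n|_{\mathbf{Z}_0}$, we obtain $\mathbf{\Phi}^n\circ\mathbf{\Psi}=\mathbf{\Psi}\circ\mathbf{\Phi}^n$, and $\mathbf{X}=P_t\in\mathbf{Z}_0$. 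The definition also requires $\mathbf{X}$ to be preperiodic under $\mathbf{\Psi}$, and $P_t$ is not preperiodic under $\mathbf{\Phi}^n$. The remedy is to regard the whole curve $\mathbf{Z}_0$ as the special subvariety, with $\mathbf{\Psi}=\mathbf{\Phi}^n$ and $\mathbf{Z}=\mathbf{Z}_0$ invariant. Its spreading out $\mathcal{Y}$ contains $\mathcal{X}$ and is $\Phi$-special of relative dimension $1$, so $r_{\Phi,\mathcal{X}}\le1$.

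\textbf{Step 2: nonvanishing when $r=1$.} We need $\hat{T}_\Phi\wedge[\mathcal{X}]\neq0$, which amounts to saying that the marked point $P_t$ is not passive on all of $\A^1$. The measure $\hat{T}_\Phi\wedge[\mathcal{X}]$ is $dd^c_t$ of the Green function $G_{F_t}(P_t)$. Its vanishing would make $t\mapsto G_{F_t}(a(t),b(t))$ harmonic on $\C$. This contradicts the growth $G_{F_t}(P_t)\sim \hat h\cdot\log|t|$ as $t\to\infty$, where $\hat h$ is the canonical height over $K(t)$, which is positive because $\deg a(t)>\deg_t f_t$ forces the height of the first coordinate to grow. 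A harmonic function on $\C$ that is nonnegative and has logarithmic growth must be constant, but $G$ vanishes at the infinitely many preperiodic parameters while being unbounded. Hence the current is nonzero.

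Equivalently, one can invoke the height $h_{P_t}$ introduced earlier: the equidistribution measure is exactly $\hat{T}_\Phi\wedge[\mathcal{X}]$, and it is nontrivial by the non-degeneracy already verified for Theorem~\ref{thm:unlikelytwoinitialpoints}.

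\textbf{Main obstacle.} The delicate point is Step 1's matching with the precise definition of $\Phi$-special when $P_t$ is not preperiodic. One must check that the DeMarco--Mavraki definition allows $\mathbf{Z}=\mathbf{Z}_0$ itself to be the special subvariety, via $\mathbf{\Psi}=\mathrm{id}$-type or invariance conventions. Here we would cite \cite{DM24}, where invariant subvarieties of polarized maps are special. The orbit closure might also have several components permuted by $\mathbf{\Phi}$, and these must be handled through the iterate.
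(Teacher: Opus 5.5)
Your proposal is correct and follows essentially the paper's argument. Theorem~\ref{thm:mainpolyskewinsubvar} supplies a $\mathbf{\Phi}$-preperiodic curve through $\mathbf{X}$, so $r_{\Phi,\mathcal{X}}\le 1$. Then $\hat{T}_\Phi\wedge[\mathcal{X}]=dd^c_t\,G(t,a(t),b(t))$ is nonzero because this function is subharmonic, bounded below and non-constant on $\C$. Your case split $r\in\{0,1\}$ and your use of the growth forced by $\deg a(t)>\deg_t f_t$ make explicit two points the paper only asserts.

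Your ``main obstacle'' is not one: the definition allows $\mathbf{Z}=\mathbf{P}^2$ and $\mathbf{\Psi}=\mathbf{\Phi}$, so any $\mathbf{\Phi}$-preperiodic curve containing $\mathbf{X}$ is special. You should, however, choose that curve as an irreducible component through $P_t$ of $\mathbf{\Phi}^{-m}(\mathbf{Z}_1)$, where $\mathbf{Z}_1\ni\mathbf{\Phi}^m(P_t)$ is a periodic component of the orbit closure. The reason is that $P_t$ may be an isolated point of $\overline{\Orb_{\mathbf{\Phi}}(P_t)}$, and then ``the component containing $P_t$'' is just $\{P_t\}$, which is not invariant.
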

\subsection{Outline of the paper}
	In Section \ref{sec:PCF}, we study the distribution of post-critically finite quadratic polynomial skew products in the moduli space and prove Theorem \ref{thm: pcf-subvariety}. The main idea is to reduce the problem to special point classifications for split quadratic polynomial families, and then apply a careful case-by-case analysis of the possible exceptional loci. We also demonstrate how Theorem \ref{thm: pcf-subvariety} implies the case of \cite[Conjecture 5.4]{Zho25}. The main argument is to identify a fiberation where the fiber structures are preserved by $F_s$ for all $s$ in the parameter space. Then the analysis on currents are reduced to one-dimension or split maps cases by projecting to the base of the fiberation. In Section \ref{sec:unlikelypolyskew}, we construct the height function associated to a marked point in a one-parameter family of regular polynomial skew products using adelic line bundles on quasi-projective varieties, and we prove Theorem \ref{thm:unlikelytwoinitialpoints} by combining this construction with arithmetic equidistribution of small points. In Section \ref{sec:proper-sub}, we apply the height functions and the vertical Böttcher coordinate formalism to show that, under suitable degree assumptions, the forward orbit of a marked point is forced to lie in a proper algebraic subvariety if the parameter space contains a Zariski dense set of preperiodic parameters, which yields Theorem \ref{thm:mainpolyskewinsubvar}. We conclude by explaining how these results fit into the framework of the DeMarco--Mavraki conjecture.
\section{Special subvarieties in $\mathcal{M}_2$ : postcritically finite maps}\label{sec:PCF}
	In this section, we denote $f_k(x) \coloneq x^2 + k$ for every $k \in \C$.
    \begin{defn}
Let $W \subseteq \A^n_{\C}$ be a subvariety, and let
$p = (p_1,\dots,p_n) \in W$.
We say that $p$ is a \emph{special point} of $W$ if, for every
$i \in \{1,\dots,n\}$, the polynomial $f_{p_i}$ is post-critically finite.
\end{defn}
\begin{defn}
   Let $\mathcal{M}_d$ denote the moduli space of conjugacy classes of degree $d$ 
regular polynomial skew products. A point $x \in \mathcal{M}_d$ is said to be 
\emph{post-critically finite} (PCF) if it corresponds to a conjugacy class of 
post-critically finite endomorphisms.

\end{defn}

We will prove Theorem \ref{thm: pcf-subvariety} and, as an application, Corollary \ref{cor: pcf-subv-imply-conj}.

\subsection{Two special cases of Theorem \ref{thm: pcf-subvariety}}

In this subsection, we collect two propositions resolving two special subcases of Theorem \ref{thm: pcf-subvariety} where the Moduli space is of dimension $3$. The proof of both two is quite lengthy, but they follow a similar framework applying \cite[Theorem 1.2]{DM25}.

  \begin{prop}\label{prop: b=0=special-subvariety}
  Let $\mathcal{M}$ denote a subspace of the moduli space of conjugacy classes of degree-$2$ 
polynomial skew products, where each class admits a representative of the form
\[
F(x,y) = (x^2 + d,\; y^2 + ax^2 + c),
\qquad d,c \in \C,~ a \in \C^*,
\]
so that $\mathcal{M}$ is naturally identified with $\C^* \times \A^2$ via the parameters $(a,c,d)$.

        Suppose $W \subseteq \mathcal{M}$ is an irreducible subvariety of positive dimension that contains a Zariski dense set of PCF points. Then $W = V(d) \cap V(c)$. 
    \end{prop}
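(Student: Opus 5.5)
The plan is to reduce the PCF condition on $F_{(a,c,d)}(x,y)=(x^2+d,\,y^2+ax^2+c)$ to a collection of one-dimensional PCF conditions, one for each quadratic polynomial $f_k$ that arises along an invariant vertical fiber. The first step is the base. A PCF skew product has a PCF base map, because the critical curve $\{x=0\}$ maps onto the vertical lines $\{x=f_d^n(0)\}$, and finiteness of this orbit forces $0$ to be preperiodic for $f_d$. Hence $d$ lies in the countable set of PCF parameters of $x^2+d$ on a Zariski dense subset of $W$. Since $W$ is irreducible, the projection $W\to\A^1_d$ is constant, so $d\equiv d_0$ on $W$ with $f_{d_0}$ PCF. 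From now on we work in the $(a,c)$-plane $\C^*\times\A^1$.

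The second step produces split coordinates from periodic fibers. Over a fixed point $x_0$ of $f_{d_0}$ (so $x_0^2+d_0=x_0$), the fiber $\{x=x_0\}$ is invariant. The restriction of $F$ to it is $y\mapsto y^2+(ax_0^2+c)$, whose critical point $y=0$ lies on the critical curve $\{y=0\}$. PCF of $F$ therefore forces $f_{ax_0^2+c}$ to be PCF. The two fixed points $\alpha,\beta$ of $f_{d_0}$ satisfy $\alpha+\beta=1$, so $\alpha^2\neq\beta^2$. Consequently the map
\[
L(a,c)=(a\alpha^2+c,\;a\beta^2+c)
\]
is an affine isomorphism of $\A^2$ (if $d_0=1/4$ and $\alpha=\beta$, use a $2$-cycle or the fiber over the critical orbit instead). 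The image $L(W)$ then has a Zariski dense set of special points in the sense of the definition above. We now invoke \cite[Theorem 1.2]{DM25}, which classifies special subvarieties for split quadratic families. It gives that $L(W)$ is all of $\A^2$, a line $\{u=k_0\}$ or $\{v=k_0\}$ with $f_{k_0}$ PCF, or the diagonal $\{u=v\}$ (a point is excluded since $\dim W>0$). The diagonal would force $a=0$, which is excluded from $\mathcal{M}$.

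The third step, which I expect to be the main obstacle, rules out the surviving cases using the rest of the postcritical set. The orbit of the critical curve $\{y=0\}$ must be finite. Its image is the line $Y=a(X-d_0)+c$. Writing $e=c-ad_0$, a line $Y=aX+e$ maps to the curve
\[
Y=(a^2+a)X^2+2aeX+e^2+c, \qquad U=X^2+d_0 .
\]
This image is a conic in $(U,Y)$ unless the term $2aeX$ vanishes. Iterating, the degrees grow unless successive such coefficients vanish. I would first try to show that finiteness of this curve orbit for generic points of $W$ forces $e=0$ and then the analogous condition at the next step, which should pin down $d_0=0$ and $c=0$. Where the degree argument is not decisive, I would add PCF conditions from further periodic cycles of $f_{d_0}$. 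A cycle of period $p$ gives another linear form in $(a,c)$ that must be a PCF parameter along $W$ (after composing the fiber maps, via the corresponding one-variable family). Every line from step two that is not $\{d=0,c=0\}$ should then be cut down to finitely many points, contradicting positive dimensionality.

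Finally, for $d_0=0$ the fixed points are $0$ and $1$, so the coordinates are $(c,\,a+c)$, and the argument above should force $c=0$, giving $W\subseteq V(d)\cap V(c)$. Since this set is an irreducible curve and $W$ is irreducible of positive dimension, $W=V(d)\cap V(c)$. It is consistent with the hypothesis: $F=(x^2,y^2+ax^2)$ is PCF exactly when the polynomials $y^2+ax_0^2$ over the periodic points $x_0$ of $x^2$ (in particular $y^2+a$ over $x_0=1$) are PCF, and such $a$ are dense in the line.
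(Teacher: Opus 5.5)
There is a genuine gap in Step 1. From ``$f_d$ is PCF on a Zariski dense set of points of $W$'' you conclude that $W\to\A^1_d$ is constant. This does not follow. Every infinite subset of $\A^1$ is Zariski dense, so a dense set of points of $W$ can have a countably infinite image in $\A^1_d$ while $W\to\A^1_d$ is dominant. Indeed, the locus $\{(a,c,d): f_d \text{ PCF}\}$ is a union of infinitely many planes $\{d=d_k\}$ and is already Zariski dense in all of $\mathcal{M}$. So your argument only treats subvarieties lying in a single slice $\{d=d_0\}$. The cases where $d$ varies along $W$ are never addressed: all of $\mathcal{M}$, surfaces, and curves dominating the $d$-line. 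The paper handles them by passing to a fixed point $d_1$ of $f_d$ and applying \cite[Theorem 1.2]{DM25} to the image of
\[
(a,c,d_1)\mapsto \bigl(a,\ ad_1^2+c,\ a(1-d_1)^2+c,\ d_1-d_1^2\bigr).
\]
In dimensions $3$ and $2$ this gives immediate contradictions; for curves, a case analysis shows $d_1$ takes only finitely many values. Note the first coordinate $a$. On the line at infinity, $F|_{H_\infty}$ is $z\mapsto z^2+a$, and its critical point $z=0$ is $V(y)\cap H_\infty$, so PCF forces $f_a$ to be PCF. You never use this condition, and it is what makes the DM25 argument decisive.

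Even granting $d=d_0$, Step 3 is a plan rather than an argument. Your coordinates $(a\alpha^2+c,\,a\beta^2+c)$ are just an affine change of coordinates on the $(a,c)$-plane. So DM25 only tells you that one of them is constant, and the degree-growth argument must do all the remaining work. Its premise fails, however. You assume the orbit of $V(y)$ is finite ``for generic points of $W$'', but you only have finiteness on a dense set of special parameters, and orbit-finiteness is not a closed condition. Already on the true answer $(x^2,\,y^2+ax^2)$ the orbit of $V(y)$ is $\{y=f_a^n(0)x\}$, which is finite only for PCF $a$ and infinite for generic $a$. Moreover, forward images of a preperiodic curve need not be lines, so $2ae\neq 0$ gives no contradiction on its own. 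In the paper, ``periodic curves are lines'' is obtained only for $b\neq0$, via \cite[Lemma 5.11]{Xie23}. In the present $b=0$ case the extra coordinate $a$ does the job instead. With $d_1$ fixed, DM25 on the projections $\bigl(a,\,ad_1^2+c\bigr)$, $\bigl(a,\,a(1-d_1)^2+c\bigr)$ (and the pair of fiber coordinates) yields one of three relations: $a=D$, $ad_1^2+c=D$, or $ad_1^2+c=a$. The ensuing case analysis leaves only $d_1\in\{0,1\}$ with $c=0$, i.e.\ $W=V(d)\cap V(c)$.
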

    \begin{proof}
    
        Let $d_1$ denotes a solution of $x^2 + d =x$ for each $d \in \C$. Then let $\mu : \C^* \times \A^2 \to \C^* \times \A^2$ be given by 
        $$ \mu(a,c,d_1) = ( a,c,d_1 -d^2_1) = (a,c,d),$$
        for any $( a,c,d_1) \in \C^* \times  \A^2$. Let $W' \coloneq \mu^{-1}(W) \subseteq \C^* \times \A^2$. It is enough to show that $W' \subseteq V(d_1-d^2_1) \cap V(c)$. We work with each irreducible component separately and hence we assume from now on that $W'$ is irreducible. 

        Let $\sigma : \C^* \times \A^2 \to \C^* \times \A^3$ be the finite map given by
        $$ \sigma(a,c,d_1) = (a, ad^2_1 + c, a(1-d_1)^2+c, d_1 -d^2_1),$$
        for any $(a,c,d_1) \in \C^* \times \A^2$. 

        Note that if $V(y)$ is preperiodic under $F$. Then, since $d_1$ is a fixed point under $f_d$, we have $$\Orb_{f_{ad^2_1 + c}}(0) \subseteq \pi_y \left(\bigcup_{n \in \N}F^n(V(y)) \cap V(x- d_1) \right),$$
        which is a finite set and hence $f_{ad^2_1 + c}$ is a post-critically finite polynomial. The same argument will also imply that $f_{a(1-d_1)^2 + c}$ is post-critically finite since $1-d_1$ is also a fixed point under $f_d$. Moreover, if $F(x,y)$ is post-critically finite, then $f_d$ and $f_a$ are also post-critically finite polynomials. 
        
        Hence the assumption that there exists a Zariski dense set of points in $(a,c,d) \in W$ such that $F(x,y) = (x^2 + d, y^2 + ax^2 + c)$ is post-critically finite implies that $\overline{\sigma(W')}$ contains a Zariski dense set of special points. 

        Suppose $\dim(W) = 3$. Then we also have $\dim\left(\overline{\sigma(W')}\right) = \dim(W') = 3 $. Since $\overline{\sigma(W')}$ contains a Zariski dense set of special points, \cite[Theorem 1.2]{DM25} implies that 
        $$ \overline{\sigma(W')} \subseteq  \bigcup_{D_1: f_{D_1} \text{ is PCF}}\left( \bigcup^4_{i = 1} V(x_i- D_1)\right) \cup \left(\bigcup_{1 \leq i\neq j\leq 4}V(x_i -x_j)\right),$$
        where $x_1, x_2,x_3,x_4$ are the coordinates of $\C^* \times \A^3$.
        
        This implies that there exists a polynomial $p$ in 
        \begin{align}
           \mathcal{L}\coloneq \{ d_1 -d^2_1 -D_1, ad^2_1 +c -D_1, a(1-d_1)^2 + c -D_1, d_1 -(a+1)d^2_1 -c , \nonumber\\ d_1 - (1-a)d^2_1 - a- c+ 2ad_1, a(2d_1 -1), a- D_1, d-d^2_1 -a, ad^2_1 + c-a, \nonumber\\a(1-d_1)^2 + c-a: D_1 \in \C, \text{ } f_{D_1} \text{ is post-critically finite}\}
        \end{align}
        such that $W' \subseteq V(p)$. This implies that $\dim(W') < 3$, which gives the contradiction.\\

        Suppose $\dim(W) = 2$. Then similarly, we have $\dim(\overline{\sigma(W')}) \leq  2$. Again, by \cite[Theorem 1.2]{DM25}, there exist two distinct polynomials $p_1, p_2 \in \mathcal{L}$ such that 
        $$ W' \subseteq V(p_1) \cap V(p_2).$$
        Note that if one of $p_1$ and $p_2$ is $a(2d_1-1) $, then $W' \subseteq V(a) \cup V(d_1 - 1/2)$. Since $a\neq 0$ by assumption and $d_1 -1/2$ implies $d = d_1 - d^2_1 = 1/4$, which won't satisfy that $f_d$ is a post-critically finite polynomial, we have that $W'$ cannot be a subvareity in $\C^* \times \A^2$ contains a Zariski dense set of special points. This is a contradiction. Thus, neither $p_1$ nor $p_2$ can be $a(2d_1-1)$. So $p_1, p_2 \in \mathcal{L}' \coloneq \mathcal{L} \setminus \{a(2d_1 -1)\}$. 
        
        Now, except $d_1 -d_1^2 - D_1$, for some $D_1$ such that $f_{D_1}$ is post-critically finite, polynomials in $\mathcal{L}'$ are all irreducible and monic in either $a$ or $c$. Hence, if $p_1, p_2$ are not in the above exceptions, we have that $p_1$ and $p_2$ do not have a non-trivial greatest common divisor as polynomials in $\C[a,c,d_1]$. Thus, $\dim(W') \leq 1$, which gives the contradiction. Now, suppose $p_1$ is given by $d_1 - d^2_1 - D_1$ for a $D_1 \in \C$ such that $f_{D_1}$ is post-critically finite. Since $p_2 \in \mathcal{L}'\setminus\{p_1\}$, we have that $V(p_1) \cap V(p_2)$ will obviously be codimensional $2$ in $\C^* \times \A^2$ as $p_2$ is monic in either $a$ or $c$. This implies that $\dim(W') \leq 1$, which is a contradiction.\\

        Now, suppose $\dim(W') = 1$. We first assume that the projection to the fourth coordinate of $\overline{\sigma(W')}$ is dominant. Then we look at $\pi_{1,4}(\overline{\sigma(W)})$, where $\pi_{i,j}$ for some $i\neq j\in\{1,2,3,4\}$ denotes the projection to the $i,j$-th coordinates. Then since $\dim(\pi_{1,4}(\overline{\sigma(W')}))=1$, \cite[Theorem 1.2]{DM25} implies that either 
        $$ a-d_1 +d^2_1 = 0$$
        or
        $$ a = D_1$$
        for some $D_1 \in \C$ such that $f_{D_1}$ is post-critically finite. 

        {\bf Case (1)}: Suppose \begin{equation}\label{eq: b=0-1-case-1}
            a-d_1 +d^2_1 = 0.
        \end{equation} In particular, the projection of $\overline{\sigma(W')}$ to the first coordinate is also dominant. Then $\dim(\pi_{1,2}(\overline{\sigma(W')})) = 1$ and \cite[Theorem 1.2]{DM25} implies that one of the following holds: 
        \begin{enumerate}
            \item $ad_1^2  + c - D_2 = 0$; 
            \item $a - D_2 = 0$;
            \item $a-ad^2_1 - c =0$;
        \end{enumerate}
        where $D_2 \in \C$ such that $f_{D_2}$ is post-critically finite. Note that the second case will contradict that $\overline{\sigma(W')}$ projects dominantly to the first coordinate. 

        {\bf Subcase (i):} Suppose 
        \begin{equation}\label{eq: b=0-1-case-1-1}
            ad^2_1 +c -D_2 = 0,
        \end{equation}
        
        for some $D_2 \in \C$. Then $\dim(\pi_{1,3}(\overline{\sigma(W')})) = 1$ and \cite[Theorem 1.2]{DM25} will again implies that one of 
        $$ a(1 - d_1)^2 + c -D_3 = 0,$$
        and 
        $$ a - a(1-d_1)^2 - c =0$$
        will hold, where $D_3 \in \C$ satisfies that $f_{D_3}$ is post-critically finite. If $a(1-d_1)^2 + c - D_3 = 0$, then plug this back to Equation (\ref{eq: b=0-1-case-1-1}), we have 
        $$ a(1-2d_1)+ D_1 -D_2 = 0.$$
        Then, together with Equation (\ref{eq: b=0-1-case-1}), we obtain that $d_1$ can take only finitely many values. This is a contradiction. 

        If $a - a(1-d_1)^2 - c =0$, then combined with Equation (\ref{eq: b=0-1-case-1-1}), we obtain that 
        $$ 2ad_1 - D_2= 0.$$
        Again, with Equation (\ref{eq: b=0-1-case-1}), we have that $d_1$ can only take finitely many values. 

        {\bf Subcase (ii):} Suppose 
        \begin{equation}\label{eq: b=0-1-case-1-2}
            a-ad^2_1 - c = 0.
        \end{equation}
        Then $\dim(\pi_{1,3}(\overline{\sigma(W')})) = 1$ and \cite[Theorem 1.2]{DM25} again implies that one of 
        $$ a(1 - d_1)^2 + c -D_3 = 0,$$
        and 
        $$ a - a(1-d_1)^2 - c =0$$
        will hold, where $D_3 \in \C$ satisfies that $f_{D_3}$ is post-critically finite. 
        If $a(1-d_1)^2 + c - D_3 = 0$ holds, then plugging this back to Equation (\ref{eq: b=0-1-case-1-2}) implies that 
        $$ 2a - 2ad_1 - D_3 = 0.$$
        Since $a \in \C^*$, this together with Equation (\ref{eq: b=0-1-case-1}) implies that $d_1$ can only take finitely many values. 

        If $a - a(1-d_1)^2 - c =0$, then together with Equation (\ref{eq: b=0-1-case-1-2}), we have $a(1-2d_1)= 0$, which gives that $d_1$ can only take finitely many values since $a \in \C^*$.\\

    {\bf Case (2):} Now, suppose
    \begin{equation}\label{eq: b=0-1-case-2}
        a - D_1 = 0.
    \end{equation}
    Then, similarly, $\dim(\pi_{2,4}(\overline{\sigma(W')})) = 1$ and \cite[Theorem 1.2]{DM25} implies that one of 
    \begin{equation}\label{eq: b=0-1-case-2-1}
        ad^2_1 + c-D_2 = 0,
    \end{equation}
    and 
    \begin{equation}\label{eq: b=0-1-case-2-2}
        ad^2_1 + c -d_1 + d^2_1 = 0,
    \end{equation}
    holds, where $D_2 \in \C$.

    {\bf Subcase (i):} Suppose Equation (\ref{eq: b=0-1-case-2-1}) holds. Then, $\dim(\pi_{2,4}(\overline{\sigma(W')})) = 1$ and \cite[Theorem 1.2]{DM25} implies that 
    one of $$a(1-d_1)^2 + c - D_3=0,$$
    $$ a(1-d_1)^2 + c -d_1 + d^2_1 = 0$$
    holds, where $D_3 \in \C$. 
    If $a(1-d_1)^2 + c - D_3$ for some $D_3 \in \C$, then together with Equation (\ref{eq: b=0-1-case-2-1}) we have 
    $$ a(1-2d_1) + D_2 -D_3 = 0.$$
    This, together with Equation (\ref{eq: b=0-1-case-2}) implies that $d_1$ can only take finitely many values, which is a contradiction.

    If $a(1-d_1)^2 + c- d_1 + d^2_1= 0$, then plugging it back to Equation (\ref{eq: b=0-1-case-2-1}) gives that 
    $$ a(1-2d_1) + ad^2_1 + D_2 -d_1 = 0.$$
    Again, with Equation (\ref{eq: b=0-1-case-2}), we have that $d_1$ can only take finitely many values.

    {\bf Subcase (ii):} Suppose 
    \begin{equation}\label{eq: b=0-1-case-2-2}
        ad^2_1 + c -d_1 + d^2_1 = 0.
    \end{equation}
    Then, again, with $\dim(\pi_{3,4}(\overline{\sigma(W')})) = 1$, we have 
    one of $$a(1-d_1)^2 + c - D_3=0,$$
    $$ a(1-d_1)^2 + c -d_1 + d^2_1 = 0$$
    holds, where $D_3 \in \C$. 
    If $a(1-d_1)^2 + c - D_3$ for some $D_3 \in \C$, then together with Equation (\ref{eq: b=0-1-case-2-2}) we have
    $$ a(1-2d_1) -d^2_1 + d_1 -D_3 = 0,$$
    which together with Equation (\ref{eq: b=0-1-case-2}) implies that $d_1$ can only take finitely many values. 

    If $a(1-d_1)^2 +c -d_1 + d^2_1 = 0$ holds, then similarly together with Equation (\ref{eq: b=0-1-case-2-2}) we have
    $$ a(1-d_1)^2 -ad^2_1 = a(1-2d_1) = 0.$$
    Since $a = D_1 \in \C^*$, we have 
    that $d_1$ can only take finitely many values. Both of the cases contradict our assumption that the projection of $\overline{\sigma(W')}$ to the forth coordinate is dominant.\\

    Now, we are left with the case that $d_1$ takes only finitely many values. Since $\overline{\sigma(W')}$ is irreducible, we assume that $d_1$ takes a fixed value for all points in $\overline{\sigma(W')}$. Since $\dim(\overline{\sigma(W')}) =1$, we have 
    $$ \dim (\pi_{1,2}(\overline{\sigma(W')})) = 1. $$
    Then, \cite[Theorem 1.2]{DM25} implies that 
    one of the following holds: 
    \begin{enumerate}
        \item $a - D_1 = 0$;
        \item $ad^2_1 + c -D_1 = 0$;
        \item $ad^2_1 + c - a= 0$;
    \end{enumerate}
    where $D_1 \in \C$ such that $f_{D_1}$ is post-critically finite. 
    
    {\bf Case (1):} If $a = D_1$, then the assumption that $\dim(W') = 1$ will imply that 
    $$ \dim(\pi_{2,3}(\overline{\sigma(W')})) = 1,$$ and applying \cite[Theorem 1.2]{DM25} again, we will get that one of the following holds: 
    \begin{enumerate}
        \item $ad^2_1 + c -D_2 = 0$;
        \item $a(1-d_1)^2 + c - D_2 = 0$;
        \item $a(1-2d_1)= 0$;
    \end{enumerate}
    where $D_2 \in \C$ such that $f_{D_2}$ is post-critically finite. 
    Since $a \in \C^*$ and $d_1 \neq 1/2$, as then $f_{d_1 - d^2} = f_{1/4}$ is not post-critically finite, we have $a(1 - 2d_1) \neq 0$. If one of the first two equations hold, then obviously that $c$ is also determined by $a= D_1$ and $d_1$, which contradicts that $\dim(W') = 1$. \\

    {\bf Case (2):} Suppose 
    \begin{equation}\label{eq: b=0-1-case-3}
        ad^2_1 + c -D_1 = 0.
    \end{equation}
    If $d_1 = 0$, then $c - D_1$. In this case, $\dim(\pi_{1,3}(\overline{\sigma(W')})) = 1$ and we have one of $a = D_2$, $a + c = D_2$ and $c = 0$ must hold by \cite[Theorem 1.2]{DM25}. Thus, either $a$ also takes only finitely many values, contradicting the dimension assumption, or $c = 0$ as well. Note that the second case that $c =0$, $d_1 = 0$ satisfies our conclusion.
    
     On the other hand, if $d_1 \neq 0$, we have $\dim(\pi_{1,3}(\overline{\sigma(W')})) = 1$, since if $a$ takes only finitely many values, then so is $c$, which will contradict the assumption that $\dim(W') = 1$. 
     Now, applying \cite[Theorem 1.2]{DM25} to $\pi_{1,3}(\overline{\sigma(W')})$, we have one of the following holds:
    \begin{enumerate}
        \item $a - D_2 = 0$;
        \item $a(1 -d_1)^2 + c - D_2 = 0$;
        \item $c + a(1-d_1)^2 - a=0$;
    \end{enumerate}
    where $D_2 \in \C$ such that $f_{D_2}$ is post-critically finite. 

    If $a = D_2$, then we combine with Equation (\ref{eq: b=0-1-case-3}) to obtain that $c$ is determined by $a$ and $d_1$ and can only take finitely many values. This contradicts that $\dim(W') = 1$. 

    If $a(1-d_1)^2 + c -D_2 = 0$, then 
    together with Equation (\ref{eq: b=0-1-case-3}), we obtain that 
    $$ D_2 - D_1 - a(1-2d_1) = 0.$$
    Note that $d_1 \neq 1/2$ as $f_{1/4}$ is not post-critically finite. Thus, $a = (D_2 -D_1)/(1 -2d_1)$ and again $c$ is determined by $a$ and $d_1$ which can only take finitely many values. This again contradicts the dimension assumption. 

    If $c + a(1-d_1)^2 -a = 0$ then, combined with Equation (\ref{eq: b=0-1-case-3}), we obtain that 
    $$ 2ad_1 - D_1 = 0.$$
    Since we assumed $d_1 \neq 0$ in this case, we have $a = D_2/(2d_1)$ and hence $c$ also takes only finitely many values, contradicting that $\dim(W') = 1$. \\

    {\bf Case (3):} Lastly, suppose
    \begin{equation}\label{eq: b=0-1-case-3-3}
        ad^2 + c -a = 0.
    \end{equation}
    Note that if $d_1 \in \{1,-1\}$, then we have $c = 0$ and $d = d_1 - d_1^2 = 0 $ or $2$. In the case that $d_1 = -1$, $\dim(\pi_{1,3}(\overline{\sigma(W')})) = 1$ and thus, applying \cite[Theorem 1.2]{DM25}, one of the following holds:
    \begin{enumerate}
        \item $a - D_2 =0$;
        \item $a(1-d_1)^2 + c - D_2 = 0$;
        \item $c + a(1 -d_1)^2 - a = 0$;
    \end{enumerate}
    where $D_2 \in \C$ such that $f_{D_2}$ is post-critically finite. In all these cases, we see that $a$ can only take finitely many values, contradicting the dimension assumption.

    On the other hand, suppose $d_1 \not\in \{1,-1\}$. Again, we apply \cite[Theorem 1.2]{DM25} to $\pi_{1,3}(\overline{\sigma(W')})$ and obtain the three cases above. If $a - D_2 = 0$ holds, then again $c = a - ad^2_1$ is determined by $a$ and $d_1$. This gives a contradiction to $\dim(W') = 1$. 

    If $a(1-d_1)^2 + c - D_2 = 0$ holds, then Equation (\ref{eq: b=0-1-case-3-3}) implies that 
    $$ a(1-2d_1) + a - D_2 = 0.$$
    Since $d_1 \neq 1$, we have $a = D_2/(2 - 2d_1)$ and again $c$ is determined by $a$ and $d_1$, which gives a contradiction.

    Finally, if $c + a(1 - d_1)^2 -a = 0$, then together with Equation (\ref{eq: b=0-1-case-3-3}), we have $a(1 -2d_1) = 0$. Since $a \in \C^*$ and $d_1 \neq 1/2$ we again conclude that this case is impossible. 
     \end{proof}

    \begin{prop}\label{prop: pcf-a=0}
Let $\mathcal{M}$ denote a subspace of the moduli space of conjugacy classes of degree-$2$ 
polynomial skew products, where each class admits a representative of the form
\[
F(x,y) = (x^2 + d,\; y^2 + bx + c),
\qquad d,b,c \in \C,
\]
so that $\mathcal{M}$ is naturally identified with $\A^3$ via the parameters $(d,b,c)$.

Let $W \subseteq \mathcal{M}$ be an irreducible Zariski closed subset of 
dimension at least $1$. If $W$ contains a Zariski dense set of PCF points, then $W$ is contained in the exceptional locus
\[
V(b)\ \cup\ \bigl(V(d)\cap V(c)\bigr).
\]

\end{prop}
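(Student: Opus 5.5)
Following the proof of Proposition~\ref{prop: b=0=special-subvariety}, I will encode PCF-ness of $F=(x^2+d,\,y^2+bx+c)$ by finitely many quadratic polynomials $f_k$ that must all be PCF, and then apply \cite[Theorem 1.2]{DM25} to the resulting image variety.

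First pass to a cover on which the fixed points of $f_d$ are coordinates. Let $d_1$ satisfy $d_1^2+d=d_1$, so $d=d_1-d_1^2$, and the other fixed point is $1-d_1$. On a fibre $x=e$ with $e$ a fixed point of $f_d$, the map restricts to $y\mapsto y^2+be+c$. If the critical line $V(y)$ is preperiodic, then the orbit of $0$ under $f_{be+c}$ is contained in the finite set $\pi_y\bigl(\bigcup_n F^n(V(y))\cap V(x-e)\bigr)$, exactly as in Proposition~\ref{prop: b=0=special-subvariety}. Also $f_d$ is PCF whenever $F$ is. Hence, with
\[
\sigma(b,c,d_1)=\bigl(bd_1+c,\; b(1-d_1)+c,\; d_1-d_1^2\bigr),
\]
the closure $\overline{\sigma(W')}\subseteq\A^3$ contains a Zariski dense set of special points, where $W'=\mu^{-1}(W)$. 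Note that $\sigma$ is generically finite. Its differential degenerates only where $b(1-2d_1)=0$, and $d_1=1/2$ gives $d=1/4$, which is not PCF.

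Next apply \cite[Theorem 1.2]{DM25}: $\overline{\sigma(W')}$ lies in a union of hyperplanes of the form $x_i=D$ with $f_D$ PCF, or $x_i=x_j$. This produces the candidate equations
\[
\begin{gathered}
bd_1+c=D,\quad b(1-d_1)+c=D,\quad d_1-d_1^2=D,\\
b(1-2d_1)=0,\quad bd_1+c=d_1-d_1^2,\quad b(1-d_1)+c=d_1-d_1^2 .
\end{gathered}
\]
The equation $x_1=x_2$ reads $b(1-2d_1)=0$, which forces $b=0$ (since $d_1\neq 1/2$). That is precisely the allowed locus $V(b)$, so I may assume $b\not\equiv 0$ on $W'$ and discard it.

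Then argue by dimension, as before.
\begin{itemize}
\item If $\dim W'=3$, a single equation cuts the dimension, which is a contradiction.
\item If $\dim W'=2$, two independent equations from the list are linear in $c$ (or fix $d_1$), so they cut to dimension $\le 1$.
\item If $\dim W'=1$, case-split on whether $d_1$ is constant.
\end{itemize}

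When $d_1$ is nonconstant, project to the pairs $(x_1,x_3)$ and $(x_2,x_3)$ and apply \cite[Theorem 1.2]{DM25} to each. The first gives $bd_1+c$ constant or $bd_1+c=d_1-d_1^2$; the second gives $b(1-d_1)+c$ constant or $b(1-d_1)+c=d_1-d_1^2$. Subtracting any such pair yields $b(1-2d_1)$ equal to a constant, or to $\pm(d_1-d_1^2)$ plus a constant, or to $0$. In every combination this expresses $b$, and then $c$, as functions of $d_1$. I then have to check that the resulting curve is not entirely PCF-dense. This is the delicate point: for example, $bd_1+c=b(1-d_1)+c=d_1-d_1^2$ forces $b=0$, which is fine. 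The mixed cases give genuine curves such as $b=(d_1-d_1^2-D)/(1-2d_1)$, and these must be ruled out. For that I would use the fibre at $x=e$ for preperiodic (not just fixed) points $e$ of $f_d$, for instance $-d_1$, whose image is $d_1$. This gives further polynomial constraints, namely PCF-ness of a composite one-dimensional dynamics, and reapplying DM25 over a larger $\sigma$ should force finiteness. I expect this to be the main obstacle.

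When $d_1$ is constant, both $b$ and $c$ vary. The equations become linear in $(b,c)$ with constant right-hand sides, except for the relations $x_1=x_3$ and $x_2=x_3$, which now have constant right-hand side as well. Two independent such linear relations force a point unless $1-2d_1=0$ (excluded) or $b=0$. The exception is when $d_1\in\{0,1\}$, i.e.\ $d=0$, where one relation is $c=0$ and $b$ may vary freely. This gives exactly $V(d)\cap V(c)$, which matches the conclusion.
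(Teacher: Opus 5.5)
There is a genuine gap, and it is structural rather than confined to the case you flag. Your three conditions ($f_d$, $f_{bd_1+c}$, $f_{b(1-d_1)+c}$ PCF) give a map $\sigma$ from the three-dimensional $(b,c,d_1)$-space to $\A^3$. As you note, $\sigma$ is generically finite, hence dominant. So for $W=\A^3$ the image closure is all of $\A^3$, which trivially contains a Zariski dense set of special points. In that case \cite[Theorem 1.2]{DM25} imposes no equation at all, and your step ``a single equation cuts the dimension'' has nothing behind it.

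The count is off by one at every stage. A special surface in $\A^3$ satisfies only one of the listed equations, not two, and a special curve only two. Concretely, each of the following has special image but is not in $V(b)\cup\bigl(V(d)\cap V(c)\bigr)$, so your conditions cannot exclude it:
\begin{itemize}
\item $V(d-D)$ with $f_D$ PCF maps onto the special plane $\{x_3=D\}$.
\item The line $\{d_1=e,\ bd_1+c=D'\}$, with $f_{e-e^2}$ and $f_{D'}$ PCF, maps onto the special line $\{x_1=D',\ x_3=e-e^2\}$. So in your $d_1$-constant case only one linear relation on $(b,c)$ is available, not two, and only $e=D'=0$ lands in the exceptional locus.
\item The mixed curves you worry about map onto special lines such as $\{x_1=D,\ x_2=x_3\}$.
\end{itemize}

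The missing idea is a fourth condition: $F$ PCF forces $f_b$ PCF. Because the $x^2$-coefficient in the second coordinate vanishes, one has
\[
F^n(x,0)=\bigl(x^{2^n}+\cdots,\ f_b^n(0)\,x^{2^{n-1}}+\cdots\bigr).
\]
Suppose $F^{nk+l}(V(y))=V(P)$ for infinitely many $n$, and let $P_m$ be the top part of $P$ for the weights $\deg x=2$, $\deg y=1$. Then the coefficient of $x^{m2^{nk+l-1}}$ in $P(F^{nk+l}(x,0))$ is $P_m\bigl(1,f_b^{nk+l}(0)\bigr)$, which must vanish. Since $P_m(1,\cdot)\not\equiv 0$, the critical orbit of $f_b$ is finite.

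Adding $x_4=b$, the map $(d_1,b,c)\mapsto(d_1-d_1^2,\ d_1b+c,\ (1-d_1)b+c,\ b)$ lands in $\A^4$. Its image is the irreducible, non-special hypersurface $(x_2-x_3)^2=(1-4x_1)x_4^2$. Now a $k$-dimensional $W$ has image of codimension $4-k$, and \cite[Theorem 1.2]{DM25} supplies enough equations.

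This is exactly what removes your obstacles. For $d_1$ nonconstant, the projection to $(x_1,x_4)$ forces $b$ constant or $b=d_1-d_1^2$, which kills curves like $b=(d_1-d_1^2-D)/(1-2d_1)$. You then need no appeal to preperiodic fibres over $-d_1$, which would only yield marked-point conditions like ``$c-bd_1$ is preperiodic for $f_{bd_1+c}$'' that \cite[Theorem 1.2]{DM25} does not cover. For $d_1$ constant with $bd_1+c=D'$, the relation $x_3=(1-2d_1)x_4+D'$ is special only if $d_1=0$ and $D'=0$, which is precisely $V(d)\cap V(c)$.
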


\begin{proof}
We suppose $W$ is not in the exceptional locus and show that $W$ doesn't contain a Zariski dense set of PCF points.

Suppose $F(x,y) \in \mathcal{M}$ is PCF, then $V(y)$ is preperiodic under $F(x,y)$.
Therefore, there exists a polynomial $P(x,y)$ such that $$F^{nk + l}(V(y)) = V(P(x,y))$$ for some positive integer $k,l$ and any positive integer $n$. Let $P_m(x,y)$ be the largest weighted degree terms in $P(x,y)$ with degree of $x$ weighted by $2$ and degree of $y$ weighted $1$, where $m$ is the weighted degree of $P(x,y)$. 
Note that 
$$ F^n(x,0) = \left(x^{2^n} + o(x^{2^n}), f^n_b(0)x^{2^{n-1}}+o(x^{2^{n-1}})\right).$$
Then, we have 
$$ P\left(F^{nk+l}_1(x,0),F^{nk+l}_2(x,0)\right) = 0,$$
where $F^l_1$ and $F^l_2$ denote the two coordinates of $F^l$ respectively for each $l \in \N$,
implies 
$$ P_m\left(F^{nk+l}_1(x,0),F^{nk+l}_2(x,0)\right) = o\left(x^{m2^{nk+l-1}}\right).$$
Note that, for any constant $\tau \in \C$ and $n\in \N$, the leading term of $P_m\left(x^{2^n},\tau x^{2^{n-1}}\right)$ is 
$$ a_m (\tau)x^{m2^{n-1}}$$
where $a_m$ is a non-trivial polynomial. Therefore $V(a_{m})$ is a finite set and $f^{nk+l}_b(0)$ is in $V(a_m)$ for every $n \in \N$. Thus, $f_b$ is PCF.\\

Now, we suppose $W = \A^3$. We want to show that the set of PCF points in $W$ is not Zariski dense. Let $d_1$ and $d_2$ be two fixed point of $x^2+d$ and notice that $F(x,y)$ is PCF implies $x^2 +d$ is PCF and in particular $d_1 \neq d_2$ as $x^2 +1/4$ is not PCF. We have $F^n(d_1,0)$ is contained in $F^n(V(y))$ which is inside the set of non-vertical subvarieties as $x^2 +d $ is a finite map. Since $V(y)$ is preperiodic if $F$ is PCF, we have $(d_1,0)$ and $(d_2,0)$ has finite forward orbit under $F$ if $F$ is PCF and thus,
since $$F(d_i, y) = (d_i, y^2 + bd_i + c),$$
where $i \in \{1,2\}$,
$f_{d_1b + c}$ and $f_{d_2b +c}$ are PCF maps. 

Since we have the relation $d = d_1 - d^2_1$ and $d_2 = 1 -d_1$. We can consider a lift of $W$ by the finite map
$$ \pi (d_1, b, c) = (d_1 - d^2_1, b ,c).$$
We call a point in $\pi^{-1}(W)$ a PCF point if its image in $W$ is a PCF point. Note that $W$ contains a Zariski dense set of PCF points if and only if $\pi^{-1}(W)$ contains a Zariski dense set of PCF points. Therefore, it is enough to show that if $\pi^{-1}(W)$ is not in $V(b)\ \cup\ \bigl(V(d_1 - d^2_1)\cap V(c)\bigr)$ then $\pi^{-1}(W)$ doesn't contain a Zariski dense set of PCF points. To ease the notation, we denote $\pi^{-1}(W)$ as $W$ from now on.\\

Now, we consider the embedding of $\A^3 \to \A^4$ by the following 
$$ \sigma : (d_1,b,c) \to (d_1 - d^2_1 , d_1b + c, (1 -d_1) b + c,b).$$
We have by the above argument that if $F$ is PCF then $\sigma((d,b,c))$ is a special point in the sense that $f_{\sigma(d,b,c)_i}$'s are PCF for each $i \in \{1,2,3,4\}$. Now, $W$ contains a Zariski dense set of PCF points implies that there is a Zariski dense set of special points in $\overline{\sigma(W)}$. Since $\overline{\sigma(W)}$ is of dimension $3$, we have by \cite[Theorem 1.2]{DM25}, $\overline{\sigma(W)}$ is given by the vanishing set of a polynomial in $$\mathcal{C} = \{ x_i - D, x_i - x_j : i \neq j \in \{1,2,3,4\}, D \in \C \text{ s.t. } f_D \text{ is PCF} \}.$$  
This implies that there exists a polynomial $p$ in 
        \begin{align}
           \mathcal{L} \coloneq \{ d_1-d_1^2 -D, b-D,d_1b+c -D, (1-d_1)b + c - D, d_1-d_1^2 -b, \nonumber\\ d_1(1-b)-d^2_1 -c,  d_1(1-b) -d_1^2 -b -c, (1 -d_1)b -c, d_1b -c, (2d_1-1)b : \nonumber\\ D \in \C \text{ such that } f_D \text{ is PCF}\}.
        \end{align}
        such that $\sigma^{-1}(\overline{\sigma(W)}) \subseteq V(p)$.

Then we see that $\sigma^{-1}(\overline{\sigma(W)})$ is not Zariski dense in $W = \A^3$. This is a contradiction. Thus, $W = \A^3$ cannot contain a Zariski dense set of PCF maps.\\

Now, let's suppose $\dim(W) = 2$. Suppose that $W$ contains a Zariski dense set of PCF maps and $W$ is not a subvariety of $V(b)$. The same argument as above implies that $\overline{\sigma(W)}$ contains a Zariski dense set of special points. Then, \cite[Theorem 1.2]{DM25} implies that $\overline{\sigma(W)}$ is defined by two polynomials in $\mathcal{C}$.

Note that equivalently there exists a pair of polynomials $g_1, g_2 \in \mathcal{C}$ such that \begin{align}
    \{\sigma^{-1}(g_1), \sigma^{-1}(g_2)\} \subseteq \mathcal{L},
\end{align}
and $\sigma^{-1}(\overline{\sigma(W)}) \subseteq V(\sigma^{-1}(g_1), \sigma^{-1}(g_2))$.
If $$\sigma^{-1}(g_1), \sigma^{-1}(g_2) \in \mathcal{L}\setminus \{ d_1 - d^2_1 - D, (2d_1 - 1)b : D \in \C \text{ such that } f_D \text{ is PCF}\},$$ then $\sigma^{-1}(g_1), \sigma^{-1}(g_2)$ are both irreducible, as $b \neq 0$, and, since they are distinct, they do not share a non-trivial greatest common divisor and hence $$\dim(V(\sigma^{-1}(g_1), \sigma^{-1}(g_2))) \leq 1 .$$

Now, suppose at least one of $\sigma^{-1}(g_1), \sigma^{-1}(g_2)$ is in $$\mathcal{L}_2 \coloneq\{ d_1 - d^2_1 - D, (2d_1 - 1)b : D \in \C \text{ such that } f_D \text{ is PCF}\}.$$ Without loss of generality, suppose it is $\sigma^{-1}(g_1)$. Then if $\sigma^{-1}(g_2) \notin \mathcal{L}_2$, then again, we have $\sigma^{-1}(g_2)$ is irreducible, as $b \neq 0$, and $$\dim(V(\sigma^{-1}(g_1), \sigma^{-1}(g_2))) \leq 1 .$$ 
If both of them live in $\mathcal{L}_2$, then 
$$ V(\sigma^{-1}(g_1), \sigma^{-1}(g_2)) = V(d_1 - d^2_1 - D, (2d_1 - 1)b).$$
Since $b \neq 0$, we have $d_1 = 1/2$. But $D = 1/2 - 1/4 = 1/4$ does not satisfy that $f_D$ is PCF. Hence, in this case $V(\sigma^{-1}(g_1), \sigma^{-1}(g_2)) = \emptyset$ and it contradicts the assumption that $\dim(W) = 2$.\\

Lastly, suppose $\dim(W) = 1$, then $\dim(\overline{\sigma(W)}) = 1$. We denote $\pi_{i}$ denote the projection from $\A^4$ to the $i$-th coordinate, where $i \in \{1,2,3,4\}$ and similarly for any subset $S \subseteq \{1,2,3,4\}$, we denote $\pi_S$ the projection, $\A^4 \to \A^{\# S}$, to the coordinates in $S$. 

Let's first suppose that $$\pi_1 : \overline{\sigma(W)} \to \A^1$$ is dominant. Then the projection $\pi_{1,2}(\overline{\sigma(W)})  $ is a curve that contains a Zariski dense set of special points, which implies, by \cite[Theorem 1.2]{DM25} that either 
\begin{equation}
    d_1 b + c -  D_0 = 0
\end{equation}
with some $D_0 \in \C$ such that $f_{D_0}$ is a PCF polynomial, or 
\begin{equation}
    d_1b + c + d^2_1 - d_1 = 0.
\end{equation}

\textbf{Case (1):} Suppose $d_1b +c - D_0 = 0$ then the $\dim\left(\pi_{1,3}(\overline{\sigma(W)})\right) = 1$ gives that, by \cite[Theorem 1.2]{DM25}, either 
\begin{equation}
    (1-d_1)b +c - D_1 =0,
\end{equation}
with some $D_1 \in \C$ such that $f_{D_1}$ is a PCF polynomial, or 
\begin{equation}
    -d^2_1 +d_1- (1-d_1)b - c = 0.
\end{equation}
\textbf{Subcase (i):} If $(1-d_1)b + c - D_1 = 0$ holds, then plug in $c = D_0 - d_1b$, we have 
\begin{equation}\label{eq: dimW=1-d-1}
    (2d_1 - 1)b = D_1 - D_0.
\end{equation}
Now, since the curve $\pi_{1,4}(\overline{\sigma(W)})$ also contains a Zariski dense set of special points, by \cite[Theorem 1.2]{DM25}, we have 
either $b$ is a non-zero constant or $$-d^2_1 +d_1 - b = 0.$$ Note that both case together with Equation (\ref{eq: dimW=1-d-1}) will imply that $d_1$ can only take finitely many values. This contradicts our assumption.

\textbf{Subcase (ii):} If $-d^2_1 + d_1 - (1-d_1)b - c = 0$, then again plug in $c = D_0 - d_1b$, we have 
\begin{equation}\label{eq: dimW=1-d-2}
    -d^2_1 +d_1 -b + 2d_1b - D_0 = 0.
\end{equation}
Again the fact that $\pi_{1,4}(\overline{\sigma(W)})$ contains a Zariski dense set of special points implies that either $b$ is a non-zero constant or $-d^2_1 +d_1 -b = 0$. In both cases, together with Equation (\ref{eq: dimW=1-d-2}), we have $d_1$ can only take finitely many values. This contradicts our assumption.\\

\textbf{Case (2):} Now, we suppose that $d_1b + c + d^2_1 - d_1 = 0$. Then again, the fact that $\pi_{1,3}(\overline{\sigma(W)})$ contains a Zariski dense set of special points implies that either 
\begin{equation}\label{eq: dimW=1-d-3}
    (1 - d_1)b + c = D_0
\end{equation}
with some $D_0 \in \C$ such that $f_{D_0}$ is a PCF polynomial, or 
\begin{equation}\label{eq: dimW=1-d-4}
    -d^2_1 +d_1 - (1-d_1)b - c = 0.
\end{equation}

\textbf{ Subcase (i):} Suppose Equation (\ref{eq: dimW=1-d-3}) holds for some $D_0 \in \C$ as above. Then, plug this back into $d_1b + c+d^2_1 - d_1 =0$, we get 
\begin{equation}\label{eq: dimW=1-d-5}
    d_1b + D_0 - b(1-d_1) - d^2_1 +d_1 = 0.
\end{equation}
Now the fact that $\pi_{1,4}(\overline{\sigma(W)}))$ is a curve containing a Zariski dense set of special points implies that either $b$ is a non-zero constant or $-d^2_1 +d_1 -b = 0$. Note that if $b$ is a constant then Equation (\ref{eq: dimW=1-d-5}) implies that there are only finitely many values that $d_1$ can take. This contradicts our assumption. 

On the other hand, if $-d^2_1 +d_1 - b = 0$, then plug in Equation (\ref{eq: dimW=1-d-5}), we have 

$$ d^3_1 -d^2_1 + D_0/2 =0,$$
which implies that $d_1$ can only take finitely many values. Again, this is a contradiction.\\

Now, we suppose that $\pi_1(W)$ is a finite set. Then there exists a $D_0$ such that $f_{D_0}$ is a PCF polynomial such that $\pi_1(\overline{\sigma(W)}) \subseteq V(d_1-d^2_1 -D_0)$ since $W$ is irreducible. Also, $$\pi_{2,3,4}(\overline{\sigma(W)}) \subseteq \A^2$$ 
is one-dimensional by assumption. Then $\pi_{2,4}(\overline{\sigma(W)}) \subseteq \A^2$ must also be a one-dimensional subvariety since otherwise it will imply that $b$ and $c$ can only take finitely many values in $\overline{\sigma(W)}$ contradicting the dimension assumption. Hence \cite[Theorem 1.2]{DM25} implies that one of the following holds 
\begin{enumerate}
    \item $b = D_1$;
    \item $d_1 b + c = D_1$;
    \item $(d_1 - 1)b + c = 0 $;
\end{enumerate}
where $D_1 \in \C$ satisfying that $f_{D_1}$ is a PCF polynomial. 

Suppose $(1)$ happens. Then note that $\pi_{2,3}(\overline{\sigma(W)})$ must also be one-dimensional. Again, \cite[Theorem 1.2]{DM25} implies that either one of $d_1b + c$ and $(1-d_1)b +c$ is a constant or $b(2d_1 -1) = 0$. Since $x^2 + 1/4$ is not a PCF polynomial, we have $d_1 \neq 1/2$ and also $b \neq 0$, we have $b(2d_1 - 1) \neq 0$. Thus, the condition implies that $c$ is also a constant, which contradicting that $\pi_{2,3}(\overline{\sigma(W)})$ is one-dimensional. 

Suppose $(2)$ holds. Then note that $\pi_{3,4}(\overline{\sigma(W)})$ is one-dimensional, which implies that either one of $b$ and $(1-d_2)b + c$ is a constant or $c =bd_1$. If $b$ is a constant, then by the assumption that $d_1b + c = D_1$, we have $c$ is also a constant and hence $\dim(W) = 0$, which is a contradiction. If $(1-d_1)b + c = D_2$ for some $D_2 \in \C$, then we have 
$$ b(1-2d_1) = D_2 -D_1.$$
Again, since $d_1 \neq 1/2$, we have $b$ is a constant and hence $c$ is a constant contradicting $\dim(W) = 1$. Lastly, if $c = d_1b$, then we have 
$$ 2d_1b = D_1 .$$
If $d_1 \neq 0$, then $b = D_1/(2d_1)$ which again implies $\dim(W) = 0$. If $d_1 = 0$, then $c = 0$ and we are in the exceptional locus where
$$F(x,y) = (x^2, y^2 + bx).$$

Finally, suppose $(3)$ holds. Note that $\pi_{2,3}(\overline{\sigma(W)})$ is one-dimensional and hence  either one of $(1-d_1)b + c$ and $d_1b + c$ is a constant or $c = d_1b$. If $(1-d_1)b + c = D_2$ for a constant $D_2 \in \C$, then $c = D_2/2$. If $d_1 \neq 1$, then $b$ is again a constant and this contradicts $\dim(W) = 1$. If $d_1 = 1$, then $d = d_1 - d^2_1 = 0$ and $c = 0$. In this case, $W$ in the exceptional locus and we obtain that 
$$ F(x,y) = (x^2, y^2 + bx).$$
Now, suppose $d_1b + c = D_2$ for some constant $D_2 \in \C$. Then, together with $(d_1 - 1)b + c = 0$, we have $b = -D_2$ and also $c = (d_1 -1)D_2$. This contradicts that $\dim(W) = 1$. Lastly, suppose $c =d_1b$. Then, again with (3), we have $(2d_1 -1)b = 0$. Since $d_1 \neq 1/2$, we have $b = 0$ and $W$ is in the exceptional locus where 
$$ F(x,y) = (x^2 + d, y^2 +c)$$
is a split morphism.

\end{proof} 

\subsection{Proof of Theorem \ref{thm: pcf-subvariety}}

The following proposition solves the last case left:

\begin{prop}\label{prop: pcf-aneq0}
 Let $\mathcal{M}$ denote a subspace of the moduli space of conjugacy classes of degree-$2$ 
polynomial skew products, where each class admits a representative of the form
\[
F(x,y) = (x^2 + d,\; y^2 + ax^2 + bx + c),
\qquad d,c \in \C,~ a,b \in \C^*,
\]
so that $\mathcal{M}$ is naturally identified with $(\C^*)^2 \times \A^2$ via the parameters $(a,b,c,d)$.
   Suppose $W \subseteq \mathcal{M}$ is an irreducible subvariety of positive dimension. Then $W$ does not contain a Zariski dense set of PCF points. 
\end{prop}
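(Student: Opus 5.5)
We follow the same framework as in Propositions \ref{prop: b=0=special-subvariety} and \ref{prop: pcf-a=0}: we extract four (or more) one-variable PCF conditions from a PCF map $F$ and then apply \cite[Theorem 1.2]{DM25}.

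\textbf{Step 1: extracting the PCF conditions.} Write $d_1$ and $d_2=1-d_1$ for the two fixed points of $f_d(x)=x^2+d$, and lift $W$ along $\pi(a,b,c,d_1)=(a,b,c,d_1-d_1^2)$. If $F$ is PCF, then:
\begin{itemize}
\item $f_d$ is PCF;
\item $V(y)$ is preperiodic, so the fixed vertical fibres $x=d_i$ give that $f_{ad_i^2+bd_i+c}$ is PCF for $i=1,2$;
\item as in the proof of Proposition \ref{prop: pcf-a=0}, comparing highest weighted-degree terms of $F^n(x,0)$ (now $F_2^n(x,0)\sim \tau_n x^{2^{n}}$ with $\tau_{n+1}=\tau_n^2+a$) shows that $f_a$ is PCF.
\end{itemize}
This gives a finite map
\[
\sigma(a,b,c,d_1)=\bigl(d_1-d_1^2,\; a,\; ad_1^2+bd_1+c,\; a(1-d_1)^2+b(1-d_1)+c\bigr)
\]
into $\A^4$. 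Under $\sigma$, PCF points go to special points, so $\overline{\sigma(W)}$ contains a Zariski dense set of special points. By \cite[Theorem 1.2]{DM25}, $\overline{\sigma(W)}$ is therefore cut out by equations of the form $x_i=D$ (with $f_D$ PCF) or $x_i=x_j$.

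\textbf{Step 2: eliminating the fibres of $\sigma$.} The map $\sigma$ is not injective, since it forgets one direction in $(b,c)$. We therefore add one more invariant that sees $b$ and $c$ separately. Fix $d_1$ and consider the periodic cycles of $f_d$: for a $2$-cycle $\{e_1,e_2\}$ of $f_d$, PCF-ness of $F$ forces the fibre map $g_{e_2}\circ g_{e_1}$ (with $g_x(y)=y^2+ax^2+bx+c$) to have a finite critical orbit. This composition is not of the form $y^2+k$, however. A cleaner option is to use that $F^n(V(y))$ is a finite set of curves and to intersect with $V(x-e)$ for $e$ a preimage of $d_1$. That intersection gives finiteness of the orbit of $g_e(0)$ under $f_{ad_1^2+bd_1+c}$, so $ae^2+be+c$ lies in the finite set of preperiodic points of the PCF polynomial $f_{ad_1^2+bd_1+c}$. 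Taking $e=-d_1$ yields $ad_1^2-bd_1+c\in \Prep(f_{ad_1^2+bd_1+c})$. The resulting relation $2bd_1\in$ (a set of algebraic numbers of bounded height) together with $b\neq0$ then pins things down once $d_1$ is bounded.

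I would instead organise the argument by dimension, exactly as in the two previous propositions.
\begin{itemize}
\item $\dim W\ge 2$: the four functions are algebraically independent enough that $\overline{\sigma(W)}$ has the same dimension as $W$. Each relation pulled back from the list $\{x_i-D,\ x_i-x_j\}$ is either irreducible and monic in $c$ or $a$, or equals $d_1-d_1^2-D$. Counting gcds gives $\dim \le 1$, a contradiction.
\item $\dim W=1$: run through the projections $\pi_{i,j}$ case by case. Each case forces either $d_1$, $a$ or $2d_1-1$ to be constant, or forces $b=0$ or $a=0$. The last two are excluded by $a,b\in\C^*$, and $d_1=1/2$ is excluded since $f_{1/4}$ is not PCF.
\end{itemize}

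\textbf{Main obstacle.} The hardest case is the curve case where $d_1$ and $a$ are both constant. There $\sigma$ only sees $bd_1+c$ and $b(1-d_1)+c$, whose difference is $b(2d_1-1)$, and the relation $x_3=x_4$ would force $b=0$. So the only surviving option is that $x_3$ and $x_4$ are both constant, and then $b$ and $c$ are constant, contradicting $\dim W=1$. I expect the real difficulty to be checking that no residual family such as $d=0,\ c=0$ (the analogue of $(x^2,y^2+bx)$) survives when $a\neq0$. This is where the $f_a$ condition, via the $x_2$ coordinate, must be used to kill it.
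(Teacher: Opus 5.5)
Your proposal has a genuine gap. The four one-variable conditions you extract (that $f_d$, $f_a$, $f_{x_3}$, $f_{x_4}$ are PCF) cannot, together with \cite[Theorem 1.2]{DM25}, prove the proposition, and the steps where your sketch says otherwise rest on a miscomputation. In fact
\[
x_3-x_4=\bigl(ad_1^2+bd_1+c\bigr)-\bigl(a(1-d_1)^2+b(1-d_1)+c\bigr)=(a+b)(2d_1-1),
\]
not $b(2d_1-1)$. So, away from $d_1=1/2$, the special relation $x_3=x_4$ pulls back to $b=-a$. That is allowed, because $a\in\C^*$ forces $b=-a\in\C^*$. On $V(a+b)$ one has $x_3=x_4=c-ad$, so $\sigma(V(a+b))$ is dense in the special hypersurface $\{x_3=x_4\}\subseteq\A^4$. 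That hypersurface contains a Zariski dense set of special points.

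Hence several families in $\mathcal{M}$ pass every test your argument applies:
\begin{itemize}
\item the three-dimensional family $V(a+b)$;
\item the surfaces $\{d=D_0,\ b=-a\}$;
\item curves such as $\{d=0,\ a=-2,\ b=2\}$, where $\sigma(W)=\{(0,-2,c,c)\}$.
\end{itemize}
Your claim that every pulled-back relation is irreducible and monic in $a$ or $c$ fails exactly for $(a+b)(2d_1-1)$. Your ``main obstacle'' paragraph, where $x_3=x_4$ is said to force $b=0$, is wrong for the same reason. The $f_a$ condition cannot rescue this, since $x_2=a$ is a free coordinate on these loci. Relatedly, $\sigma$ is generically finite, because $b$ is recovered from $x_3-x_4$; non-injectivity is not the real problem.

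What is missing is an invariant that rigidifies the orbit of $V(y)$ beyond the invariant fibres $x=d_1,\,1-d_1$ and the line at infinity. Your Step~2 condition $ad_1^2-bd_1+c\in\Prep(f_{x_3})$ is valid, but it is not of the form covered by \cite[Theorem 1.2]{DM25}. Exploiting it would need a multi-parameter unlikely-intersection theorem for a critical point together with an extra marked point. The ``bounded height'' remark gives neither finiteness nor non-density, and the condition is empty at $d_1=0$.

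The paper gets the needed rigidity from the dynamics at $H_\infty$:
\begin{itemize}
\item Step~I shows, by an explicit leading-term computation of $F^{nk}(x,0)$ that uses $b\neq0$, that $0$ cannot be periodic for $f_a$.
\item Step~II then uses \cite[Lemma 5.11]{Xie23}: the periodic curves in the orbit of $V(y)$ meet $H_\infty$ transversally at a single non-superattracting point. Hence they are lines $y=kx+l$, and the image of such a line is a line only if $2kl+b=0$.
\item Periodicity then gives $f_a^n(k)=k$, and $d_1k+l$ and $(1-d_1)k+l$ periodic under $f_{x_3}$ and $f_{x_4}$. It also gives the conditions that $F^2$ and $F^3$ of the line are again lines.
\end{itemize}
These equations tie $b$ and $c$ to $(k,l)$. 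They are exactly what the paper uses to kill the $b=-a$ branches that recur throughout its case analysis. The case where $k$ is not preperiodic is handled separately, by forcing $d=0$ and invoking \cite{GNY19}.
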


\begin{proof}
    Note that $F(x,y)$ is post-critically finite only if $V(y)$ is preperiodic under the action of $F(x,y)$. Denote $H_\infty = \P^2 \setminus \A^2$. \\

    {\bf Step \RNum{1}:} We first rule out the case that $V(y) \cap H_\infty$ is a superattracting periodic point.
    For the purpose of contradiction, we suppose $V(y)$ intersects with $H_\infty$ at a point which is a super-attracting periodic point of $F|_{H_\infty}$. Note that 
    $$ F|_{H_\infty}([x:y:0]) = [x^2: y^2 + ax^2 : 0].$$
    We identify the map $F|_{H_\infty}$ with $f_a(z) \coloneq z^2 + a$ by viewing $H_\infty$ as a projective line. Then $V(y) \cap H_\infty$ is identified as the point $z = 0$. Let $k$ be the period of $0$ under $f_a$ and note that $k > 1$ as $a \neq 0$. Then we have 
    $$ f^k_a (0) = 0,$$
    $$ f^i_a(0) \neq 0,$$
   where $1 \leq i < k$.

    Note that we assumed $b \neq 0$. Along the line $V(y)$, the map $F$ satisfies
\[
F(x,0) = \bigl(x^{2}+d,\; ax^{2}+bx+c\bigr).
\]
Note that
\[
f_d^{n}(x)
   = x^{2^{n}} + 2^{n} d\, x^{2^{n}-2} + o\!\left(x^{2^{n}-2}\right),
\]
for all $n \in \N$, where $o(x^\ell)$ denotes a polynomial whose terms all have degree $<\ell$.

A direct induction shows that
\[
F^{k}(x,0)
   =\left(
        f_d^{k}(x),\;
        b\,2^{k-1}\!\left(\prod_{i=1}^{k-1} f_a^{i}(0)\right) x^{2^{k}-1}
        + o\!\left(x^{2^{k}-1}\right)
     \right),
\]
since $f^k_a(0) = 0$.

Applying $F$ once more yields
\[
F^{k+1}(x,0)
   =\left(
        f_d^{k+1}(x),\;
        a\bigl(f_d^{k}(x)\bigr)^{2}
        + b^{2}2^{2k-2}
          \left(\prod_{i=1}^{k-1} f_a^{i}(0)\right)^{2}
          x^{2^{k+1}-2}
        + o\!\left(x^{2^{k+1}-2}\right)\right). \]

We next expand $f_d^{k+2}(x)$:
\[
f_d^{k+2}(x)
   = (f_d^{k+1}(x))^{2} + d
   = \bigl(f_d^{k}(x)\bigr)^4 + O\!\left(x^{2^{k+1}}\right)
   = \bigl(f_d^{k}(x)\bigr)^4 + o\!\left(x^{2^{k+2}-2}\right).
\]
Thus
\[
F^{k+2}(x,0)
   =\left(
        f_d^{k+2}(x),\;
        f_a^{2}(0)\bigl(f_d^{k}(x)\bigr)^4
        + b^{2}2^{2k-1} f_a(0)
           \left(\prod_{i=1}^{k-1} f_a^{i}(0)\right)^{2}
           x^{2^{k+2}-2}
        + o\!\left(x^{2^{k+2}-2}\right)
     \right).
\]
Then, it is not hard to see that 
\[
\begin{aligned}
F^{2k-1}(x,0)
   = \Bigg(
        f_d^{2k-1}(x),\;
        & f_a^{k-1}(0)\bigl(f_d^{k}(x)\bigr)^{2^{k-1}} \\
        &+\, b^{2} 2^{3k-4}
           \left(\prod_{j=1}^{k-2} f_a^{j}(0)\right)
           \left(\prod_{i=1}^{k-1} f_a^{i}(0)\right)^{2}
           x^{2^{2k-1}-2} \\
        &+\, o\!\left(x^{2^{2k-1}-2}\right)
     \Bigg).
\end{aligned}
\]

More generally, for integers $n\ge 1$ and $0\le m<k$,
\[
f_d^{nk+m}(x)
   = \bigl(f_d^{nk}(x)\bigr)^{2^m}
     + O\!\left(x^{2^{nk+m}-2^{nk}}\right)
   = \bigl(f_d^{nk}(x)\bigr)^{2^m}
     + o\!\left(x^{2^{nk+m}-2^{n-1}}\right).
\]

We proceed on by induction on $n$. Assuming that 

\[
F^{nk}(x,0)
   =\left(
        f_d^{nk}(x),\;
        b^{2^n} 2^{g^{n}(0)\,k - g^n(0)}
        \left(\prod_{i=1}^{k-1} f_a^{i}(0)\right)^{g^{n}(0)}
        x^{2^{nk}-2^{\,n-1}}
        + o\!\left(x^{2^{nk}-2^{\,n-1}}\right)
     \right),
\]
where $g(x) = 2x + 1$, a similar computation gives us
\[
\begin{aligned}
F^{nk+1}(x,0)
   = \Bigg(
        f_d^{nk+1}(x),\;
        & f_a(0)\bigl(f_d^{nk}(x)\bigr)^2 \\
        &+\, b^{2^{n+1}} 2^{2g^n(0)k - 2g^n(0)}
           \left(\prod_{i=1}^{k-1} f_a^{i}(0)\right)^{2g^n(0)}
           x^{2^{nk+1}-2^{n}} \\
        &+\, o\!\left(x^{2^{nk+1}-2^{n}}\right)
     \Bigg).
\end{aligned}
\]

and

\[
\begin{aligned}
F^{(n+1)k-1}(x,0)
   = \Bigg(
       & f_d^{(n+1)k-1}(x),\;
         f_a^{k-1}(0)\bigl(f_d^{nk}(x)\bigr)^{2^{k-1}} \\
        &+\, b^{2^{n+1}} 2^{g^{n+1}(0)k- 2g^{n}(0)-2   }
           \left(\prod_{j=1}^{k-2} f_a^{j}(0)\right)
           \left(\prod_{i=1}^{k-1} f_a^{i}(0)\right)^{2g^n(0)}
           x^{2^{(n+1)k-1}-2^{n}} \\
        &+\, o\!\left(x^{2^{(n+1)k-1}-2^{n}}\right)
     \Bigg).
\end{aligned}
\]

From here, applying $F$ one more time, we obtain that 
\[
\begin{aligned}
F^{(n+1)k}(x,0)
   = \Bigg(
        & f_d^{(n+1)k}(x), \\
        & b^{2^{\,n+1}}
          2^{\,g^{n+1}(0)\,k - g^{n+1}(0)}
          \left(\prod_{i=1}^{k-1} f_a^{i}(0)\right)^{g^{n+1}(0)}
          x^{2^{(n+1)k}-2^{\,n}} \\
        &\qquad +\, o\!\left(x^{2^{(n+1)k}-2^{\,n}}\right)
     \Bigg).
\end{aligned}
\]
This completes the induction argument and we conclude that the general formula is

\[
F^{nk}(x,0)
   =\left(
        f_d^{nk}(x),\;
        b^{2^n} 2^{g^{n}(0)\,k - g^{n}(0)}
        \left(\prod_{i=1}^{k-1} f_a^{i}(0)\right)^{g^{n}(0)}
        x^{2^{nk}-2^{\,n-1}}
        + o\!\left(x^{2^{nk}-2^{\,n-1}}\right)
     \right),
\]
for every $n \in \N^+$.

    If $V(y)$ is preperiodic, then let $V(P(x,y))$ be a periodic subvariety under $F^k$ living in the forward orbit of $V(y)$. There are infinitely many $n_0 \in \N$ such that $$F^{n_0k}(V(y)) = V(P(x,y)).$$ Let $m = \deg(P)$ and $l= \deg_x(P_m(x,y)) \geq 0$, where $P_m(x,y)$ is the homogenuous degree $m$ term in $P(x,y)$. 
    Let $n$ be a positive integer large enough such that $F^{nk}(V(y)) = V(P(x,y))$ and $2^{nk} > m2^{n-1}$.
    However, we will then have that 
    \begin{align}
        P(F^{nk}(x,0)) \text{ has non-vanishing leading term of degree at least } \nonumber \\l2^{nk}+ (m-l)(2^{nk}-2^{n-1}) > (m-1)2^{nk},
    \end{align} 
    which is strictly greater than the degree of the rest of terms. This contradicts the fact that 
    $$P(F^{nk}(x,0)) \text{ is constantly zero} $$
    from the assumption that $F^{nk}(V(y)) = V(P(x,y))$. Thus, we ruled out this case, i.e., if $F(x,y)$ is PCF, then $V(y)$ cannot pass through the super-attracting periodic point of $F|_{H_\infty}$.\\

    {\bf Step \RNum{2}:}
    Now, suppose $V(y)$ intersecting $H_\infty$ at the point $[1:0:0]$ which is not periodic, then the periodic point in the forward image of $[1:0:0]$ under $F|_{H_\infty}$ is non-superattracting. Then if $V(y)$ is preperiodic under $F$, then the periodic curves in the forward orbit of $V(y)$ are lines since they intersecting $H_{\infty}$ at a single non-superattracting periodic point under $F|_{H_\infty}$ and, by \cite[Lemma 5.11]{Xie23}, such an intersection is transverse. 

    Since, we know that the curve in the periodic cycle in the forward images of $V(y)$ are all lines, and notice that these cannot be vertical lines, we can assume that there exists a line $C = V(y - kx -l)$ for some $k \in \C^*$ and $l \in \C$ such that the forward images of this line under $F$ are all lines.

    Now, 
    \begin{equation}\label{eq: generic-translation-map}
        F(C) = \overline{\{(x^2 + d, (k^2 + a)x^2 + (2kl + b)x + l^2 + c) : x \in \C\}}
    \end{equation} is a line implies 
    \begin{equation}\label{eq: aneq0-1}
        2kl + b = 0.
    \end{equation} 

    We consider the space $\C^* \times \A^1 \times (\C^*)^2 \times \A^2$ parametrized by $(k,l, a,b,c,d)$ and denote $$\pi_0: \C^* \times \A^1 \times (\C^*)^2 \times \A^2 \to \C^*$$ and $$\pi_1: \C^* \times \A^1 \times (\C^*)^2 \times \A^2 \to \A^1$$ the projection to the first two coordinates respectively and $\pi_2$ the projection to the rest of the coordinates. 

    Let $$\tilde{W} \subseteq \C^* \times \A^1 \times (\C^*)^2 \times \A^2$$ denote the subvariety consisting of the points $t_0 \in \C^* \times \A^1 \times (\C^*)^2 \times \A^2$ such that 
    $$ \deg\left(F^n_{\pi_2(t_0)}\left(V(y - \pi_0(t_0)x - \pi_1(t_0))\right)\right) = 1$$
    for all $n \in \N$, where $F_{\pi_2(t_0)}$ is the quadratic regular polynomial skew products parametrized by the last four coordinates of $t_0$. Note that, by the discussion above, if $W$ contains a Zariski dense set of PCF points, then $W \subseteq \pi_2(\tilde{W})$.
    
    If $\pi_2(\tilde{W})$ is of dimension $0$, then we are done as this implies that there doesn't exist a positive dimension subvariety in $(\C^*)^2 \times \A^2$ that contains a Zariski dense set of PCF points. 

    Now, we assumed that $\dim (\pi_2(\tilde{W})) \geq 1$. Let $F_{\pi_2(t)}$ denote the generic fiber of $\Phi$ over $\pi_2(\tilde{W})$,
    \begin{equation}
        \Phi: \pi_2(\tilde{W}) \times \A^2 \to \A^2, \quad\Phi(\pi_2(t_0),x_0,y_0) = F_{\pi_2(t_0)}(x_0,y_0),
    \end{equation}
    where $t_0 \in \tilde{W}$ and $(x_0,y_0) \in \A^2$. Let $k(t)$, $l(t)$ denote the generic point of $\pi_0(\tilde{W})$ and $\pi_1(\tilde{W})$ respectively. \\
    
    {\bf Case \RNum{1}:} Suppose $k(t)$ is not preperiodic under $F_{\pi_2(t)}|_{H_\infty}$. By the definition of $\tilde{W}$ and Equation (\ref{eq: generic-translation-map}), we have that 
    $$ F_{\pi_2(t)}(V(y - k(t)x - l(t))) = V(y - (k^2(t) + a(t))x - (l(t)^2 + c(t) - (k^2(t) + a(t))d(t)) ),$$
    where $(a(t), b(t), c(t), d(t))$ denotes the generic points in $\pi_2(\tilde{W})$.  Moreover, $$G^n(k(t), l(t)) \in V(2kl + b(t))$$  for all $n \in \N$, where 
    $$ G(k,l) = (k^2 + a(t), l^2 - d(t)k^2 + c(t) -a(t)d(t)),$$
    since $F^n_{\pi_2(t)}$ maps $V(y - k(t)x - l(t))$ to lines for any $n \in \N$ and from above we know that this requires $2k(t)l(t) +b(t) = 0$.
    Since we assumed that $k(t)$ is not preperiodic under $F_{\pi_2(t)}|_{H_\infty} (k)  = k^2 + a(t)$, we have 
    $$ \Orb_{G}(k(t),l(t)) \subseteq V(2kl+ b(t))$$
    and, hence,
    \begin{equation}
        G^n(V(2kl + b(t))) \subseteq V(2kl + b(t)).
    \end{equation}
    Since $V(2kl + b(t)) \cap H_\infty = \{[1 : 0: 0], [0:1:0]\}$ and 
    $$ G([1:0:0]) =[1 : -d(t): 0] ,$$
    we have this implies that $d = 0$. Hence 
    \begin{equation}
        G(k,l) = (k^2 + a(t), l^2 + c(t)).
    \end{equation}\\

    To simplify the notation, we write $a,b$ and $c$ to denote the coordinates $a(t), b(t)$ and $ c(t)$ of the generic point $t$ in $\tilde{W}$ respectively. Note that $b\neq 0$. Then by \cite[Theorem 1.4]{GNY19} if $a, c \notin \{0,-2\}$, then $V(2kl + b)$ is invariant under $G(k,l)$ implies that $a = c$.  Suppose first that $a, c \notin \{0,-2\}$. We have that $V(2kl + b)$ is invariant under $G$ is equivalent to 
    \begin{equation}
        \frac{b^2}{4l^2} + a = \frac{b}{2(l^2 + a)}
    \end{equation}
    for any $l \notin \{ 0\} \cup V(l^2 + a)$. Clearing the denominator and comparing the coefficients of the $l^4$ terms, we have $a = 0$, which contradicts our assumption. 

    On the other hand, if one of $a, c$ is in $  \{0,-2\}$. This is equivalent to say that one of $k^2 + a$ and $l^2 + c$ is a power map or Chebyshev polynomial. Then the proof of \cite[Theorem 1.3]{GNY19} implies that both of them are simultaneously power maps or Chebyshev polynomials. Since we have the assumption that $a \neq 0$, hence $a = c = -2$. Then again, $V(2kl + b)$ is invariant under 
    $$ G(k,l) = (k^2 -2, l^2 -2)$$
    gives the condition that 
    \begin{equation}
        \frac{b^2}{4l^2} -2 = \frac{b}{2(l^2 -2)}
    \end{equation}
    should hold for any $l \in \C \setminus \{0, \pm \sqrt{2}\}$. This implies that there are only finitely many possible values of $b$ that can potentially make $F(x^2, y^2-2x^2 + bx-2)$ post-critically finite. This contradicts the assumption that $\tilde{W}$ is positive dimension.\\

    {\bf Case \RNum{2}:} Now, suppose that $k(t)$ is preperiodic under $F_{\pi_2(t)}$. Then, we may change the variable to let $k(t)$ denote a point in its forward orbit under $F_{\pi_2(t)}$ on the generic fiber so that $k(t)$ is periodic.
   
Then there exists a $n \in \N^+$ such that 
    $$ (k(t), a(t)) \in V(f_{a(t)}^{ n}(k(t)) - k(t) ) .$$

    Note that in this case, we further restrict $\tilde{W}$ to a subvariety of $ \tilde{W} \subseteq \C^* \times \A^1 \times (\C^*)^2 \times \A^2$ such that for a Zariski dense set of $t_0 \in \tilde{W}$, we have $V(y - \pi_0(t_0)x - \pi_1(t_0))$ is periodic under $F_{\pi_2(t_0)}$ of period dividing $n$. We abuse notation to still denote this subvariety as $\tilde{W}$. This will imply that for a generic $t \in \tilde{W}$, we also have $V(y- \pi_0(t)x - \pi_1(t))$ is of period dividing $n$ under $F_{\pi_2(t)}$. Note that since $W$ is assumed to contain a Zariski dense set of PCF points and the periodic cycles in the forward orbits of $V(y) \cap H_\infty$ does not contain $[1:0:0]$, we still have $W \subseteq \pi_2(\tilde{W})$ by \cite[Lemma 5.11]{Xie23}.

Let $\sigma' : \A^4 \to \A^4$ be defined as 
$$ \sigma'(d_1, a ,b,c) = (d_1(1-d_1), a, b,c). $$
Then we still have $\sigma'^{-1}(W) \subseteq \sigma'^{-1}(\pi_2(\tilde{W}))$ and without making any confusion, we will abuse notation to let $W$ denote $\sigma'^{-1}(W)$ and similarly $\tilde{W}$, $\pi_2$ are with respect to the coordinates $(k,l,d_1,a, b, c)$. 

Since, $V(y - kx - l)$ is a periodic curve of period dividing $n$ under $F_{\pi_2(t_0)}$ for a Zariski dense set of $t_0 \in \tilde{W}$, let $d_1$ be a variable such that $d= d_1(1-d_1)$, we have that every $(k,l,a,b,c,d_1) \in \tilde{W}$ satisfies
\begin{equation}\label{eq: a-d_1-b-c-k-l}
    f^n_{ad^2_1 + bd_1 + c}(d_1 k + l) = d_1 k + l,
\end{equation}
\begin{equation}\label{eq: a-(1-d_1)-b-c-k-l}
    f^n_{a(1-d_1)^2 + b(1-d_1) + c}((1-d_1) k + l) = (1 -d_1) k + l.
\end{equation}
Since both of the equations are monic in $c$, together with 
\begin{equation}\label{eq: a-k}
    f^n_a(k) = k
\end{equation}
and
\begin{equation}\label{eq: b-l-k}
    2kl + b = 0,
\end{equation}
we have that $\dim(\tilde{W}) \leq 3$.\\

 Then Equations (\ref{eq: a-k}) and (\ref{eq: b-l-k}) imply that $\dim(\pi_2(\tilde{W})) = 3$ and in particular $(a,b,d_1)$ are free variables. Let $\tau_{S} $, where $S \subseteq \{a,b,c,d_1\}$, denote the projection to the set of coordinates specified by $S$. This implies that $\dim(\tau_{a,b,d_1}(\pi_2(\tilde{W}))) = 3 $. By \cite[Theorem 1.2]{DM25}, we know that one of the following expression must be $0$: 
\[
\mathcal{L} \coloneq 
\left\{
\begin{aligned}
& a - D, \\
& d_1(1 - d_1) - D, \\
& a d_1^2 + b d_1 + c - D, \\
& a(1 - d_1)^2 + b(1 - d_1) + c - D, \\
& d_1(1 - d_1) - a, \\
& d_1(1 - d_1) - (a d_1^2 + b d_1 + c), \\
& d_1(1 - d_1) - \bigl(a(1 - d_1)^2 + b(1 - d_1) + c\bigr), \\
& a(1 - d_1)^2 + b(1 - d_1) + c - a, \\
& a d_1^2 + b d_1 + c - a, \\
& a - 2 a d_1 - 2 b d_1 + b : D \in \C \text{ such that $f_D$ is PCF}
\end{aligned}
\right\}.
\]
If any $p \in \mathcal{L}$ that doesn't involve $c$ holds, then $\dim(\tau_{a,b,d_1}(\pi_2(\tilde{W}))) \leq 2 $, which is a contradiction.

Suppose $a d_1^2 + b d_1 + c = D$, for some $D$ such that $f_D$ is PCF. We obtain that, by Equation (\ref{eq: a-d_1-b-c-k-l}), $f^n_D(d_1k + l) = d_1k + l$ and hence $d_1$ is determined by $a$ and $b$ as $k$ and $l$ are determined by them. This again implies that $\dim(\tau_{a,b,d_1}(\pi_2(\tilde{W}))) \leq 2 $. The exact same argument also works for the case that one of 
$$ \{d_1(1 - d_1) = (a d_1^2 + b d_1 + c),  a d_1^2 + b d_1 + c = a\}$$
holds. Now, similarly, applying the same argument with Equation (\ref{eq: a-(1-d_1)-b-c-k-l}), we  can obtain contradictions when one of the following equations holds
\[
\left\{
\begin{aligned}
& a(1-d_1)^2 + b(1-d_1) + c = D, \\
& d_1(1-d_1) = a(1-d_1)^2 + b(1-d_1) + c, \\
& a(a-d_1)^2 + b(1-d_1) + c = a \; ; \\
& D \text{ is a constant such that } f_D \text{ is PCF}
\end{aligned}
\right\}.
\]
Thus, we conclude that $\dim(\tilde{W}) \leq 2$.\\

\textbf{The case when $\dim(W) \leq 2$:}

Since Equation (\ref{eq: a-d_1-b-c-k-l}) implies that $c$ is determined by $a, b,d_1$, we have 
$\dim( \tau_{a,b,d_1}(\pi_2(\tilde{W}))) \leq 2$. Thus, there are two expressions in $\mathcal{L}$ holds simultaneously by \cite[Theorem 1.2]{DM25}. \\

\textbf{Subcase (1):} If two expressions in 
$$ \mathcal{L}' \coloneq  \{a - D,d_1(1 - d_1) - D,d_1(1 - d_1) - a: D \in \C \text{ such that $f_D$ is PCF}\}$$
hold, then there are only finitely many values that $a$ and $d_1$ can take. Hence, Equations (\ref{eq: a-d_1-b-c-k-l}) and (\ref{eq: a-(1-d_1)-b-c-k-l}) imply that $b$ can only take finitely many values as well. This contradicts that $\tilde{W}$ is of positive dimension.\\

\textbf{Subcase (2):} Now, suppose there exists a pair of $p_1 \in \mathcal{L}'$ and $p_2 \in \mathcal{L} \setminus \mathcal{L}'$ that equal to $0$. If $p_2$ is not $a-2ad_1 - 2bd_1 + b$, then one of $ad_1^2 + bd_1 + c$ and $a(1-d_1)^2 + b(1-d_1) + c$ is completely determined by $a$ or $d_1$ (or just a constant). Since the argument will be exactly the same, we demonstrate one case here. Without loss of generality, assume $p_2$ is $ad^2_1 + bd_1 + c -a$. Then Equation (\ref{eq: a-d_1-b-c-k-l}) becomes
$$ f^n_a(d_1k + l) = d_1k + l.$$
Also, $p_1$ gives that either $a$ (or $d_1$) takes only finitely possible values or $a$ is determined by $d_1$. Now, these together with Equations (\ref{eq: a-k}) and (\ref{eq: b-l-k}) imply that there is only finitely many values that $b$ can take given a $d_1$ (or $a$). Hence, we have $\tilde{W}$ is at most dimension $1$ in this case.

If $p_2 $ is $a-2ad_1 - 2bd_1 + b$, then $p_2$ and $p_1$ together imply that $$\dim(\tau_{a,b,d_1}(\pi_2(\tilde{W})) )\leq 1, $$
since $f_{1/4}$ is not a PCF map and $W $ cannot live in $V(d_1 -1/2)$. Hence $\dim(\pi_2(\tilde{W})) \leq 1$ and $\dim(\tilde{W}) \leq 1$.\\

\textbf{Subcase (3):} Now, suppose both $p_1, p_2 \in \mathcal{L} \setminus \mathcal{L}'$. If 
\[
p_1, p_2 \in \mathcal{L}_1 \coloneq 
\left\{
\begin{aligned}
& a d_1^2 + b d_1 + c - D, \\
& a d_1^2 + b d_1 + c - a, \\
& d_1(1-d_1) - (a d_1^2 + b d_1 + c)
\;:\;
D \in \C \text{ such that } f_D \text{ is PCF}
\end{aligned}
\right\}.
\]
or 
\[
p_1, p_2 \in \mathcal{L}_2 \coloneq
\left\{
\begin{aligned}
& a(1-d_1)^2 + b(1-d_1) + c - D, \\
& d_1(1-d_1) - \bigl(a(1-d_1)^2 + b(1-d_1) + c\bigr), \\
& a(1-d_1)^2 + b(1-d_1) + c - a \\
&\quad :\; D \in \C \text{ such that } f_D \text{ is PCF}
\end{aligned}
\right\}.
\]
then we have that either $a$ or $d_1$ takes a fixed value or $a$ is determined by $d_1$. Moreover, without loss of generality suppose $p_1,p_2 \in \mathcal{L}_1$. Then Equation (\ref{eq: a-d_1-b-c-k-l}) becomes 
$$ f_q^n(d_1k - b/2k) = d_1k - b/2k,$$
where $q$ is a constant or $q \in \{a,d_1(1-d_1)\}$. This gives a non-trivial relation between $a$ and $b$ as the left hand side has greater than $1$ $b$-degree and $k$ is determined by $a$. Hence, 
$\tau_{a,b,d_1}(\pi_2(\tilde{W}))$ is of dimension less than or equal to $1$ and so is $\tilde{W}$. A similar argument also works with Equation (\ref{eq: a-(1-d_1)-b-c-k-l}) if $p_1,p_2 \in \mathcal{L}_2$.

Now, suppose $p_1 \in \mathcal{L}_1$ and $p_2 \in \mathcal{L}_2 \cup \{a - 2 a d_1 - 2 b d_1 + b\}$. The other case that $p_1 \in \mathcal{L}_1 \cup \{a - 2 a d_1 - 2 b d_1 + b\}$ and $p_2 \in \mathcal{L}_2$ can be handled in exactly the same way and so we omit it here. We first suppose that $d_1 \neq 1/2$. Then $p_1$ and $p_2$ together imply that 
$$ b =P(a,d_1)/(1- 2d_1),$$
where $P$ is a polynomial and $\deg_{d_1} (P) \leq 2$ and its $d_1$-degree $2$ term has coefficient  either $0$ or $1$. Thus,
$$ kd_1 +l = kd_1 -b/2k = kd_1 - P(a,d_1)/(2k(1-2d_1)) = h(k)d_1 + o(d_1),$$
where $o(d^m_1)$ denotes the term of $d_1$-degree smaller than $m$ for any $m \in \N^+$ and $h(k)$ is a non-constant rational function in $k$. Then, Equation (\ref{eq: a-d_1-b-c-k-l}) gives that 
$$ f^n_q(kd_1 +l) =f^{n-1}_{q}(h(k)^2d^2_1 + q + o(d^2_1))= q_0(k)d^{2^n}_1 + o(d^{2^n}_1) $$
$$= d_1k + l = h(k)d_1 + o(d_1), $$
where $q_0$ is a non-constant rational function, $q$ is either a constant or $q \in \{a,d_1(1-d_1)\}$.
Thus, this is a non-trivial relation between $d_1$ and $a$ unless $\pi_0(\tilde{W}) \subseteq  V(q_0)$. But, note that if $\pi_0(\tilde{W}) \subseteq   V(q_0)$, then $a$ can also only take finitely many values by Equation (\ref{eq: a-k}). Therefore, $p_1$ and $p_2$ will again imply that $b$ is determined by $d_1$. Thus $\dim(\tau_{a,b,d_1}(\pi_2(\tilde{W}))) \leq 1$ and hence $\tilde{W}$ is of dimension less or equal to $1$.

Now, suppose $d_1 = 1/2$. Then Equation (\ref{eq: a-d_1-b-c-k-l}) implies that 
$$ f^n_q(k/2 - b/2k) = k/2- b/2k,$$
where $q $ is either a constant or $q=a$. In either case, we have that $b$ can only take finitely many values given a value of $a$. Thus $\dim(\tau_{a,b,d_1}(\pi_2(\tilde{W}))) \leq 1$.\\

\textbf{Case when $\dim(W) \leq 1$:}

Now, we have $\dim(\tau_{a,b,d_1}(\pi_2(\tilde{W}))) \leq 1$ by Equation (\ref{eq: a-d_1-b-c-k-l}). Note that $\tilde{W}$ cannot live in $V(d_1 - 1/2)$ as $f_{1/4}$ is not a PCF map. \\

\textbf{Subcase (1):} We first look at the case that $d_1 \in V(d_1) \cup V(d_1 -1)$. Since these two cases can be handled in the same way, we assume, without loss of generality, that $d_1 = 1$.  

Let $\sigma : \A^4 \to \A^4$ be a morphism defined by
$$ \sigma(d_1,a,b,c) = (d_1(1-d_1), a, ad^2_1 + bd_1 + c, a(1-d_1)^2 + b(1-d_1) + c).$$
The same argument as in Propositions \ref{prop: b=0=special-subvariety} and \ref{prop: pcf-a=0} gives that for a point $(d_1,a,b,c) \in W$, if $$F_{d_1(1-d_1),a,b,c} \coloneq (x^2 + d_1(1-d_1), y^2 + ax^2 + bx +c)$$ is PCF then $\sigma(d_1,a,b,c)$ is a special point.
Let $\mu_S : \A^4 \to \A^1$ be the projection from $\A^4$ to the coordinates marked by $S$, where $S \subseteq \{1,2,3,4\}$. If $\tau_a(W)$ is a finite set. Then, simply apply \cite[Theorem 1.2]{DM25} to $\mu_{2,3} \circ \sigma (W)$ and $\mu_{2,4}\circ \sigma(W)$. We will get that $b$ and $c$ are also determined by $a$ and $d_1$ and thus $\dim(W) = 0$. Now, we assume $\tau_a(W)$ is an infinite set.

Then $\dim(\mu_{2,3}\circ \sigma(W)) = 1$ implies, by \cite[Theorem 1.2]{DM25}, that 
either $a + b + c = a$ or $a +b + c = D$ for a constant $D$ such that $f_D$ is PCF.
On the other hand, $\dim(\mu_{2,4} \circ \sigma(W))=1$ implies that either $c$ is a constant or $a = c$. Suppose $c = D$ for some constant $D$. Then Equation (\ref{eq: a-(1-d_1)-b-c-k-l}) implies that $-b/2k$ can only take finitely many values. Now, since $a + b + c$ is either $a$ or a constant, we have $b$ is linear in $a$. Thus, $a$ is linear in $k$. Now, Equation (\ref{eq: a-k}) implies that $k$ can take only finitely many values and so is $a$. This is a contradiction.

On the other hand, if $a + b + c = D$ for a constant and $c=a$. Then, $b = D$ and Equation (\ref{eq: a-d_1-b-c-k-l}) directly give that $k$ can only take finitely many values and again a contradiction.

 For the case that $a = c$ and $a + b + c = a$, we have $a = c = -b$. Now, the periodic cycle of $V(y -kx -l)$ consisting of only lines implies that $F^2(V(y-kx -l))$ is a line, which is equivalent to 
$$ (k^2+a)(l^2 + a) + b= 0.$$
Together with Equation (\ref{eq: b-l-k}), we obtain that 
\begin{equation}\label{eq: F^2-a-k}
    a\left((k^2 + a)(a/(2k)^2 + 1) -1\right) = a\left(\frac{a^2}{4k^2} + \frac{5}{4}a + k^2 -1\right) = 0.
\end{equation}
Similarly, $F^3(V(y- kx -l))$ is a line implies that 
\begin{equation}\label{eq: F^3-a-k}
\left((k^2+a)^2+a\right) \left(((a/2k)^2+a)^2+a\right)-a = 0.
\end{equation}
Since $$\frac{a^2}{4k^2} + \frac{5}{4}a + k^2 -1$$ does not divide the LHS of Equation (\ref{eq: F^3-a-k}) and it is irreducible, we have Equations (\ref{eq: F^2-a-k}) and (\ref{eq: F^3-a-k}) together imply that $a$ can only take finitely many values.\\

\textbf{Subcase (2):} Now, we suppose that $\tilde{W}$ lives in $V(d_1 - D_0)$ for some constant $D_0 \in \C$ such that $f_{D_0}$ is PCF but $D_0 \neq 0,1$. Note that if $a$ is also a constant, similarly as in subcase $(1)$, applying \cite[Theorem 1.2]{DM25} to $\mu_{2,3}(\sigma(W))$ and $\mu_{2,4}(\sigma(W))$ will easily give that $b$ and $c$ are also constant. Hence, $W$ is not positive dimensional. Suppose $a$ is not constant. Then, applying \cite[Theorem 1.2]{DM25} to $\mu_{2,3}(\sigma(W))$ and $\mu_{2,4}(\sigma(W))$ will give us the following three cases up to the symmetry of interchanging the role of $d_1$ and $1-d_1$: 
\begin{enumerate}
    \item $ ad^2_1 + bd_1 +c = D_1, \quad a(1-d_1)^2 + b(1-d_1) + c = D_2;$
    \item $ad^2_1 + bd_1 + c  = a, \quad a(1-d_1)^2 + b(1-d_1) + c = D_2;$
    \item $ ad^2_1 + bd_1+ c = a = a(1-d_1)^2+ b(1-d_1) + c,$
\end{enumerate}
where $D_1$ and $D_2$ are constants such that $f_{D_1}$ and $f_{D_2}$ are PCF maps. 
In the first two cases, Equation (\ref{eq: a-(1-d_1)-b-c-k-l}) implies that 
$$ f^n_{D_2}\left(k(1-d_1) - \frac{b}{2k} \right) = k(1-d_1) - \frac{b}{2k}.$$
Hence,
\begin{equation}\label{eq: b-k}
    k(1-d_1) - \frac{b}{2k}
\end{equation}

can only take finitely many values. Note that we also have in case $(1)$ that 
$$ b = -a + C_1 $$
and in case $(2)$ that 
$$ b = \frac{2 -2d_1}{2d_1 -1}a +C_2$$
where $C_1$ and $C_2$ are two constants. Note that $d_1 \neq 1/2$ implies that the leading coefficients are non-zero. Hence, plugging in these to Expression (\ref{eq: b-k}), we have 
$$a = C_3k^2 + C_4$$
for both cases where $C_3$ is some non-zero constant and $C_4$ is a constant.
Now, Equation (\ref{eq: a-k}) gives that 
$$ f^n_{C_3k^2 + C_4}(k) = k,$$
which is not an equation that constantly hold as it has a non-vanishing degree $1$ term and others are even degree. Thus, there are only finitely many $k$ can make it hold and hence $a,b,c$ can only take finitely many values, contradicting that $W$ is positive dimension. 

Now, we look at case $(3)$. Similarly, we have $b = -a$ and $c = (1 -d^2_1 + d_1)a$ in this case. Since we have the assumption that $F^2(V(y-kx+a/2k))$ is again a line as $W \subseteq \pi_2(\tilde{W})$, we have 
\begin{equation}
    (k^2+a)\left(-d_1(1-d_1)(k^2+a) + \frac{a^2}{4k^2} + (1-d^2_1 + d_1)a\right) =a 
\end{equation}
which expands to 
$$4d_1(1-d_1)k^6 + 4a (d^2_1 - d_1 +1)k^4 + 5a^2k^2 + a(a^2-1) = 0 .$$
The LHS is an irreducible polynomial in $\C[a][k]$ (from the Eisenstein's criterion). Again, it doesn't divide $f^n_a(k) - k$ as the later contains a $k$ term with coefficient $1$. Thus, there are only finitely many values that $a$ and $k$ can take. Therefore, $W$ is not positive dimension.\\

\textbf{Subcase (3):} Now we suppose $a$ is a constant that makes $f_a$ a PCF map. If $d_1$ is also a constant, then we arevback to a case handled above and conclude that $W$ must be dimension $0$. Now, suppose $d_1$ is not a constant. Then, again applying \cite[Theorem 1.2]{DM25}, we have the following three cases up to the symmetry of interchanging the roles of $d_1$ and $1-d_1$: 
\begin{enumerate}
    \item $ ad^2_1 + bd_1 + c = D_1, \quad a(1-d_1)^2 + b(1-d_1) + c = D_2;$
    \item $ ad^2_1 + bd_1 + c = d_1(1-d_1), \quad a(1-d_1)^2 + b(1-d_1) + c = D_2;$
    \item $ ad^2_1 + bd_1 + c =d_1(1-d_1) = a(1-d_1)^2 + b(1-d_1) + c,$
\end{enumerate}
where $D_1$ and $D_2$ are constant such that $f_{D_1}$ and $f_{D_2}$ are PCF maps. 

Suppose $(1)$ holds. Then we can solve that $b = -a + C_1/(2d_1-1)$ for some constant $C_1$. Then Equation (\ref{eq: a-d_1-b-c-k-l}) gives that 
$$ f^n_{D_1}(kd_1 - (-a + C_1/(2d_1-1))/(2k)) =kd_1 - (-a + C_1/(2d_1-1))/(2k), $$
which implies that 
$$ kd_1 - (-a + C_1/(2d_1-1))/(2k)$$
takes only finitely many possible values. Since $k \neq 0$, we have $d_1$ can only take finitely many values. This implies that $W$ is a finite set.

Suppose $(2)$ holds. Then we can solve that 
$$ b = \frac{d^2_1 - d_1 + D_2}{1-2d_1} -a.$$
Plugging this into Equation (\ref{eq: a-(1-d_1)-b-c-k-l}) and clear the denominators, we obtain that 
$$ (4k^2-1)d^2_1 + (1 - 6k^2 - 2a)d_1 + o(d_1) = 0,$$
where $o(d_1)$ is a polynomial in $\C[k]$. Other than the case that $k^2 = 1/4$ and $a = 1/4$, we have that $d_1$ can only take finitely many values. While, since $f_{1/4}$ is not a PCF map, we have that $a \neq 1/4$. Thus, $d_1$ can only take finitely many values, which implies that $W$ is a finite set.

Suppose $(3)$ holds. Then, we can solve that $b = -a$ and $$c = (-1-a)d^2_1 + (a+1)d_1.$$ Since $F^2(V(y-kx-l))$ is again a line, we have that, similarly as the previous computations,
\begin{equation}\label{eq: (3)-F^2-lines}
    4k^2(k^2+a)(k^2-1)d^2_1 - 4k^2(k^2+a)(k^2-1)d_1 + a[a(k^2+a) + 4k^2] = 0.
\end{equation} 
Since $k \neq 0$ and also the periodic cycle of $k$ under $f_a$ doesn't contain $0$, we have that if $k \neq \pm 1$, then this implies that $d_1$ can only take finitely many values. Now, suppose $k = 1$. If $n = 1$, then Equation (\ref{eq: a-d_1-b-c-k-l}) implies that 
$$ a(4d_1 + a -2) = 0.$$
Since $a \neq 0$, we have $d_1$ can only take finitely many values. Now, if $n > 1$, then we have 
$$ f^2_{d_1(1-d_1)}(kd_1 + a/2k)  = 16a(a+2)d^2_1 + o(d_1^2),$$
where $o(d^2_1)$ is a polynomial in $\C[k][d_1]$ of $d_1$-degree smaller than $2$. Note that if $a \neq -2$, then for any $n \geq 2$, we have 
$$\deg_{d_1} (f^n_{d_1(1-d_1)}(kd_1 + a/2k))  = 2^{n-1}.$$
Thus, Equation (\ref{eq: a-d_1-b-c-k-l}) implies that $d_1$ can only take finitely many values. If $a = -2$, then note that the periodic cycle in the forward orbit of $0$ under $f_{-2}$ is $2$ which is a fixed point. Then the set of values that $d_1$ can take making $F$ PCF will in particular satisfy that 
$ F^2(V(y - 2x -1/2))$ is again a line by Equation (\ref{eq: b-l-k}) and \cite[Lemma 5.11]{Xie23}. Again, this implies that, by plugging in values of $a= -2$, $b =2 $ and $k = 2$ into Equation (\ref{eq: (3)-F^2-lines}),
$$ 96d^2_1 + o(d^2_1) = 0,$$
which gives that $d_1$ can only take finitely many values. 

Now, if $k=  -1$, then we compute that 
$$ f_{d_1(1-d_1)}(kd_1 + a/2k)- kd_1 - a/2k = (a+2)(4d_1 + a)$$
and 
$$f^2_{d_1(1-d_1)}(kd_1 + a/2k) =16a(a+2)d^2_1 + o(d^2_1) .$$
Then using exactly the same argument as above, we have that if $a \neq -2$ then $d_1$ can only take finitely many values. If $a = -2$, the above argument again tells us that there are only finitely many $d_1$ that can make $F$ a PCF map. Then $W$ is a finite set. \\

\textbf{Subcase (4):} Now, we are left with the case that $\dim(\mu_{1,2}(\sigma(W)))=1$ and none of $a, d_1$ are constant. In this case, \cite[Theorem 1.2]{DM25} implies that $a = d_1(1-d_1)$. Then we apply \cite[Theorem 1.2]{DM25} to $\mu_{1,3}(\sigma(W))$ and $\mu_{1,4}(\sigma(W))$ and we have that one of the three following cases hold up to the symmetry of interchanging the role of $d_1$ and $1- d_1$: 
\begin{enumerate}
    \item $ ad_1 + bd_1 + c = D_1, \quad a(1-d_1)^2 + b(1-d_1) + c = D_2;$
    \item $ ad^2_1 + bd_1 + c = d_1(1-d_1), \quad a(1-d_1)^2 + b(1-d_1) + c = D_2;$
    \item $ ad^2_1 + bd_1 + c =d_1(1-d_1) = a(1-d_1)^2 + b(1-d_1) + c,$
\end{enumerate}
Suppose $(1)$ or $(2)$ holds. We can solve that $$b = (-2d_1^3 + o(d^3))/(1-2d_1).$$
Now, Equation (\ref{eq: a-k}) becomes 
$$ f^n_{d_1(1-d_1)}(k) = k.$$
When $d_1$ approaches infinite, any branch of the curve $V(f^n_{d_1(1-d_1)}(k) -k)$, denoted as $k(d_1)$, will be a power series of $d_1$ such that $\deg_{d_1}(k(d_1)) = 1$. Now, Equation (\ref{eq: a-(1-d_1)-b-c-k-l}) also implies that 
\begin{equation}\label{eq: d_1-k-D_2}
    f^n_{D_2}(k(1-d_1) - (-2d^3_1 + o(d^3_1))/(2k(1-2d_1)) = k(1-d_1) - (-2d^3_1 + o(d^3_1))/(2k(1-2d_1)).
\end{equation}
Now, $\deg_{d_1}(k(d_1)) = 1$ implies that 
$$ \deg_{d_1}(k(1-d_1) - (-2d^3_1 + o(d^3_1))/(2k(1-2d_1)) = 2$$
as $d_1$ approaches infinity.
Thus, Equation (\ref{eq: d_1-k-D_2}) implies that $d_1$ can only take finitely many values. This implies that $W$ is a finite set.

Suppose $(3)$ holds. Then we can solve that 
$$ b = -a = -d_1(1-d_1).$$
Again, let $k(d_1)$ be an arbitrary branch of the curve 
$$V(f^n_{d_1(1-d_1)}(k) -k)$$
as $d_1$ approaches infinite and we have $\deg_{d_1}(k(d_1)) = 1$ as a power series in $d_1$. Now, Equation (\ref{eq: a-d_1-b-c-k-l}) becomes 
$$ f^n_{d_1(1-d_1)}(kd_1 + d_1(1-d_1)/(2k)) = kd_1 + d_1(1-d_1)/(2k).$$
Plug in $k(d_1)$, we obtain that 
$\deg_{d_1}(k(d_1)d_1 + d_1(1-d_1)/(2k(d_1))) = 2$ and hence Equation (\ref{eq: a-d_1-b-c-k-l}) implies that there are only finitely many values that $d_1$ can take. Hence, we conclude that $W$ is a finite set.

\end{proof}
We repeat here the statement of Theorem \ref{thm: pcf-subvariety} for the reader's convenience.
\begin{thm}[Theorem \ref{thm: pcf-subvariety}]
     Let $\mathcal{M}$ denote the moduli space of conjugacy classes of degree-$2$ polynomial skew products, where each class admits a representative of the form
\[
F(x,y) = (x^2 + d,\; y^2 +ax^2+ bx + c),
\qquad d, a, b,c \in \C,
\]
so that $\mathcal{M}$ is naturally identified with $\A^4$.
Let $W \subseteq \A^4$ be an irreducible Zariski closed subset of dimension at least $1$.
If $W$ contains a Zariski dense set of post-critically finite (PCF) points, then $W$ lives in the exceptional locus
\[
\bigl( V(b) \cap V(a) \bigr)\ \cup\ \bigl(V(a) \cap V(d)\cap V(c)\bigr)\ \cup \bigl(V(b) \cap V(c) \cap V(d) \bigr).
\]
\end{thm}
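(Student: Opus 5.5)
The plan is to deduce Theorem \ref{thm: pcf-subvariety} from the three propositions already proved, by splitting $\A^4$ according to whether $a$ and $b$ vanish. Let $W\subseteq\A^4$ be irreducible of positive dimension and containing a Zariski dense set of PCF points. Since $W$ is irreducible, exactly one of the following holds.
\begin{itemize}
\item $W\subseteq V(a)\cap V(b)$.
\item $W\subseteq V(a)$ but $W\not\subseteq V(b)$.
\item $W\subseteq V(b)$ but $W\not\subseteq V(a)$.
\item $W$ lies in neither $V(a)$ nor $V(b)$.
\end{itemize}
In the last three cases the subsets $W\setminus V(b)$, $W\setminus V(a)$, or $W\setminus(V(a)\cup V(b))$ respectively are nonempty open, hence dense in $W$. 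Intersecting a dense open subset with a Zariski dense set of PCF points still gives a Zariski dense set, so in each case the relevant open piece of $W$ carries a Zariski dense set of PCF points. That piece is an irreducible positive-dimensional subvariety of the corresponding subspace $(\C^*)^2\times\A^2$, $\C^*\times\A^2$, or of the $b\neq 0$ part of $V(a)\cong\A^3$.

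\textbf{Case analysis.}
\begin{itemize}
\item First case: $W$ already lies in $V(b)\cap V(a)$, which is part of the exceptional locus, so there is nothing to prove.
\item Fourth case: $W\cap\{a\neq0,\,b\neq0\}$ is an irreducible positive-dimensional subvariety of $(\C^*)^2\times\A^2$ with a Zariski dense set of PCF points. This contradicts Proposition \ref{prop: pcf-aneq0}, so this case cannot occur.
\item Third case ($a\not\equiv0$ on $W$, $W\subseteq V(b)$): Proposition \ref{prop: b=0=special-subvariety} applies to the open piece $W\cap\{a\neq0\}$ and forces it into $V(d)\cap V(c)$. Taking closures gives $W\subseteq V(b)\cap V(c)\cap V(d)$.
\item Second case ($W\subseteq V(a)$, $b\not\equiv 0$ on $W$): Proposition \ref{prop: pcf-a=0} applies directly to $W$ viewed in $\A^3$ with coordinates $(d,b,c)$. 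It gives $W\subseteq V(b)\cup(V(d)\cap V(c))$. Since $W$ is irreducible and not contained in $V(b)$, we get $W\subseteq V(a)\cap V(d)\cap V(c)$.
\end{itemize}

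\textbf{Remaining bookkeeping.} Two points need care. First, the propositions are stated for closed subvarieties of the open subspaces $\C^*\times\A^2$ or $(\C^*)^2\times\A^2$, so one must check that $W\cap U$ is closed irreducible in $U$ of the same dimension and still has dense PCF points. This is immediate from irreducibility. Second, in the second case one should make sure Proposition \ref{prop: pcf-a=0} is applied to a $W$ not contained in $V(b)$; its proof actually uses $b\neq0$ generically, which is exactly our assumption.

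The main obstacle is conceptual rather than computational: all the heavy lifting sits in the three propositions, especially Proposition \ref{prop: pcf-aneq0}. The proof here is just this stratification plus passing to closures. I expect no further difficulty beyond verifying that the density hypothesis transfers to open subsets.
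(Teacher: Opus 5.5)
Your proposal is correct and is essentially the paper's proof. The paper likewise uses Proposition \ref{prop: pcf-aneq0} to force $W\subseteq V(a)\cup V(b)$, then applies Proposition \ref{prop: pcf-a=0} when $W\subseteq V(a)$ and Proposition \ref{prop: b=0=special-subvariety} when $W\subseteq V(b)$ but $W\not\subseteq V(a)$; your passage to dense open pieces and back to closures only makes explicit the bookkeeping that the paper leaves implicit.
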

\begin{proof}
    First, Proposition \ref{prop: pcf-aneq0} concludes that 
    $$W \subseteq V(a) \cup V(b). $$

    If $W \subseteq V(a)$, then Proposition \ref{prop: pcf-a=0} concludes that 
    $$ W \subseteq V(a) \cap \bigl(V(b) \cup \bigl( V(d) \cap V(c) \bigr)\bigr).$$
    Similarly, if $W \subseteq V(b) \setminus V(a)$, then Proposition \ref{prop: b=0=special-subvariety} concludes that 
    $$ W \subseteq V(d) \cap V(b) \cap V(c).$$
    These concludes the proof.
\end{proof}
\begin{lem}\label{lem: a=0-bneq0-subvariety}
   There are infinitely many $b \in \C$ such that $$F_b(x,y) = (x^2, y^2 + bx)$$
   is a post-critically finite endomorphism on $\P^2$.
\end{lem}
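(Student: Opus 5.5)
The plan is to find an explicit one-parameter family of curves containing the forward orbit of the critical locus, on which $F_b$ acts through a one-variable quadratic polynomial. Homogenizing, $F_b([x:y:z])=[x^2:y^2+bxz:z^2]$. The Jacobian determinant is a constant multiple of $xyz$, so the critical locus is $V(x)\cup V(y)\cup V(z)$. PCF means that each of these three components has finite forward orbit.

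First I dispose of two components directly.
\begin{itemize}
\item The line at infinity $V(z)$ is totally invariant.
\item The line $V(x)$ is invariant, since $F_b(0,y)=(0,y^2)$.
\end{itemize}
So everything reduces to the orbit of $V(y)$.

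For $\lambda\in\C$ consider the conics (parabolas) $C_\lambda\coloneq V(y^2-\lambda x)$, and note that $V(y)$ is the reduced curve underlying $C_0$. If $y^2=\lambda x$, then
\[
F_b(x,y)=(X,Y)=\bigl(x^2,\; \lambda x+bx\bigr),
\]
so $Y^2=(\lambda+b)^2x^2=(\lambda+b)^2X$. Hence $F_b(C_\lambda)\subseteq C_{(\lambda+b)^2}$, with equality because both curves are irreducible when $\lambda+b\neq0$. In the degenerate case $\lambda+b=0$ the image is the line $V(y)$, which is still consistent with the parametrization. Directly, $F_b(V(y))=\{(x^2,bx)\}=C_{b^2}$.

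Thus the forward orbit of $V(y)$ is $\{C_{\lambda_n}\}$, where $\lambda_0=0$ and $\lambda_{n+1}=(\lambda_n+b)^2$. Since $\lambda\mapsto C_\lambda$ is injective (distinct $\lambda$ give distinct curves), $V(y)$ is preperiodic exactly when $0$ is preperiodic under $\lambda\mapsto(\lambda+b)^2$. The substitution $w=\lambda+b$ conjugates this map to $w\mapsto w^2+b=f_b(w)$ and sends $\lambda_0=0$ to $w_0=b=f_b(0)$. Consequently $F_b$ is PCF if and only if $f_b$ is PCF.

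It remains to recall that infinitely many $b\in\C$ make $f_b(z)=z^2+b$ PCF. For instance, for each $n$ the roots of $f_b^n(0)=0$, taken as a polynomial in $b$ of degree $2^{n-1}$, give PCF parameters. Parameters for which $0$ has exact period $n$ exist for every $n$: their number grows like $2^{n-1}$ minus the contributions from proper divisors of $n$, which is positive. Parameters with different exact periods are distinct, so there are infinitely many such $b$; alternatively one can use the Misiurewicz parameters.

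The only step needing care is the degenerate member: the image of the non-reduced $C_0$ as the line $V(y)$, and the case $\lambda+b=0$. Checking it directly as above settles it, so I expect no real obstacle.
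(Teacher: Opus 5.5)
Your proof is correct and is essentially the paper's argument. The paper computes $F_b^n(V(y))$ as the curve parametrized by $x\mapsto (x^{2^n}, f_b^n(0)x^{2^{n-1}})$, which is exactly your parabola $C_{\lambda_n}$ with $\lambda_n=f_b^n(0)^2$, so both proofs reduce preperiodicity of $V(y)$ to preperiodicity of $0$ under $f_b$. Your version additionally checks that $V(z)$ is invariant, and uses injectivity of $\lambda\mapsto C_\lambda$ to get the converse ($F_b$ is PCF if and only if $f_b$ is PCF); the lemma does not need that converse.
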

\begin{proof}
    It is enough to show that there are infinitely many $b \in \C$ such that $V(x) \cup V(y)$ are preperiodic under $F_b$. Note that it is obvious that $V(x)$ is invariant under $F_b$ for all $b \in \C$. Thus, we only need to show that the set of $b \in \C$ such that $V(y)$ is preperiodic under $F_b$ is infinite. 

    A direct computation shows that 
    $$ F^n_b(V(y)) = \overline{\left\{  \left(x^{2^n}, f_b^{n}(0)x^{2^{n-1}}\right) : x \in \C  \right\}},$$
    where $f_b(z) = z^2 + b$.
    Hence, if $0$ is preperiodic under $f_b$, then $V(y)$ is preperiodic under $F_b$. Since there are infinitely many $b \in \C$ such that $f_b$ is post-critically finite, we have that there exist infinitely many $b \in \C$ such that $0 $ is preperiodic under $f_b$ and so $V(y)$ is preperiodic under $F_b$.

    This concludes the proof.
\end{proof}

\begin{rmk}
Theorem~\ref{thm: pcf-subvariety}, together with
Lemma~\ref{lem: a=0-bneq0-subvariety}, implies that a non-isotrivial family
of quadratic polynomial skew products contains a Zariski dense set of
post-critically finite (PCF) endomorphisms if and only if one of the
following holds:
\begin{enumerate}
\item the family consists of homogeneous polynomial endomorphisms;
\item the family is a subfamily of split morphisms consisting of a Zariski dense set of PCF maps;
\item the family is conjugate to the one-parameter family of polynomial
endomorphisms
\[
F_b(x,y) = (x^2,\; y^2 + bx),
\]
parametrized by $b \in \C$.
\end{enumerate}
\end{rmk}
\subsection{Implication on Conjecture \ref{conj-const-1.2}}
We demonstrate here that our main theorem in this section proves a special case of Conjecture \ref{conj-const-1.2}, which is Corollary \ref{cor: pcf-subv-imply-conj}. Note that the set up of Conjecture \ref{conj-const-1.2} is closely connected to Conjecture \ref{conj: DM24-conj-1,1}. It is actually implied by Conjecture \ref{conj: DM24-conj-1,1} since the assumption that a family of subvarieties contains a Zariski dense set of preperiodic subvarieties implies the family contains a Zariski dense set of preperiodic points. 

\begin{proof}[Proof of Corollary \ref{cor: pcf-subv-imply-conj}]
    Note that after a conjugation, we can assume that for every $s \in S$, $F_s$ is given by 
   $$F([x:y:z]) = [x^2 + dz^2: y^2 + ax^2 + bxz + cz^2 : z^2],$$ 
 for some $a,b,c,d \in \C$. 

 The first case is that $\Phi$ is isotrivial in the sense that for every $s \in S$, $F_s $ is constantly equal to 
 $$ F ([x:y:z]) = [x^2 + dz^2: y^2 + ax^2 + bxz + cz^2 : z^2]$$
 for some fixed $a,b,c,d \in \C$. Then our assumption implies that $F_s$ is post-critically finite for all $s \in S$ and, in particular, $V(x) \cup V(y)$ is preperiodic under $F$. 
 
 Then for any $N \in \N$, we have $$\Phi^{\times N} (s, p_1, \dots, p_{2N}) = (s, F(p_1), \dots, F(p_{2N}))$$
 $$ \mathcal{C}^N = S \times \left( V(x) \times V(y) \right)^N.$$
 Also, $\mathcal{C}^N$ is $\Phi^{\times N}$-special since $(V(x) \times V(y))^N$ is preperiodic under $F^{\times 2N}$. Therefore, the relative special dimension of $\mathcal{C}^N$ over $S$ is $r_{\Phi^{\times N}, \mathcal{C}^N} = 2N$. 
 
 Hence, we need to verify that $\hat{T}^{2N}_{\Phi^{\times N}} \wedge [\mathcal{C}^N] \neq 0$.  
 Let $$\pi : S \times (\P^2)^{2N} \to (\P^2)^{2N}$$ denote the projection map. By the projection formula, we have 
 \begin{align}
     \pi_* (\hat{T}^{2N}_{\Phi^{\times N}} \wedge [\mathcal{C}^N])= \pi_*(\hat{T}^{2N}_{\Phi^{\times N}} \wedge \pi^*[(V(x) \times V(y))^N]) \\= \pi_* \hat{T}^{2N}_{\Phi^{\times N}} \wedge [(V(x) \times V(y))^N] = T^{2N}_{F^{\times 2N}} \wedge [(V(x) \times V(y))^N].  \nonumber
 \end{align} 
Let $m_1, m_2$ be a positive integers such that $$(F^{\times 2N})^{m_1}((V(x) \times V(y))^N)$$
is a periodic subvareitey under $F^{\times 2N}$ of period $m_2$.
Let $T = \left(\sum_{i=1}^{2N} \pi_i^*\omega\right)^{\wedge 2N}$ be a $2N$-current on $(\P^2)^{2N}$, where $\pi_i$ denotes projection onto the $i$-th factor and $\omega$ is the Fubini–Study form on $\P^2$. 
    Then
    \[
    \int_{(\P^2)^{2N}} T \wedge [\left(F^{\times 2N}\right)^{m_1}\left((V(x) \times V(y))^N\right)] > 0.
    \]
    Let $d = \deg(F^{\times 2N})$. 
    Observe that for every $n \in \N^+$
    \begin{align*}
       & \int_{(\P^2)^{2N}} d^{-2N(nm_2 + m_1)} \left((F^{\times 2N})^{nm_2 + m_1}\right)^*T \wedge [(V(x) \times V(y))^N] 
        \\&= \int_{(\P^2)^{2N}} d^{-2N(nm_2 + m_1)} T \wedge (F^{\times 2N})^{nm_2 + m_1}_*[(V(x) \times V(y))^N] \\
        &= \int_{(\P^2)^{2N}} d^{-2N(nm_2 + m_1)} T \wedge d^{2N(nm_2 + m_1)}[\left(F^{\times 2N}\right)^{m_1}\left((V(x) \times V(y))^N\right)] \\
        &= \int_{(\P^2)^{2N}} T \wedge [\left(F^{\times 2N}\right)^{m_1}\left((V(x) \times V(y))^N\right)].
    \end{align*}
    Moreover, by the local uniform convergence of the potentials of $d^{-2Nn}(F^{\times 2N})^{n*}T$ to those of $T_{F^{\times 2N}}^{\wedge 2N}$ (see \cite[Chapter~III, Corollary~3.6]{Dem}), we have
    \[
    d^{-2Nn}(F^{\times 2N})^{n*}T \wedge [(V(x) \times V(y))^N] \longrightarrow T_{F^{\times 2N}}^{\wedge 2N} \wedge [(V(x) \times V(y))^N]
    \quad \text{as } n \to \infty.
    \]
    Hence,
    \[
    \int_{(\P^2)^{2N}} T_{F^{\times N}}^{\wedge 2N} \wedge [(V(x) \times V(y))^N] > 0,
    \]
    and therefore,
    \[
    T_{F^{\times 2N}}^{\wedge 2N} \wedge [(V(x) \times V(y))^N] > 0.
    \]

  Therefore, $$\hat{T}^{r_{\Phi^{\times N}, \mathcal{C}^N}}_{\Phi^{\times N}} \wedge [\mathcal{C}^N] \neq 0.$$ \\
 Now, let's suppose $\Phi$ is not isotrivial. Then there exists a morphism $\tau : S \to \A^4_\C$ such that \begin{align}
     \Phi(s,[x_1:y_1:z_1], [x_2:y_2:z_2])\\ = (s, [x^2_1 + \tau_1(s)z^2_1: y^2_1 + \tau_2(s)x^2_1 + \tau_3(s)x_1z_1 + \tau_4(s)z^2_1 : z^2_1], \nonumber\\ [x^2_2 + \tau_1(s)z^2_2: y^2_2 + \tau_2(s)x^2_2 + \tau_3(s)x_2z_2 + \tau_4(s)z^2_2 : z^2_2]).\nonumber
 \end{align}
 Then, our assumption that there exists a Zariski dense set of $s \in S$ such that $F_s$ is post-critically finite implies that $\tau(S)$ contains a Zariski dense set of PCF points. Then Theorem \ref{thm: pcf-subvariety} implies that 
 $$ \tau(S) \subseteq \bigl( V(b) \cap V(a) \bigr)\ \cup\ \bigl(V(b) \cap V(d)\cap V(c)\bigr)\ \cup \bigl(V(a) \cap V(c) \cap V(d) \bigr).$$\\

 {\bf Case \RNum{1}:} Suppose $\tau(S) \subseteq V(a) \cap V(b)$. This implies that \begin{align}
     \Phi_s([x_1:y_1:z_1], [x_2:y_2:z_2])\\ = ( [x^2_1 + \tau_1(s)z^2_1: y^2_1 + \tau_4(s)z^2_1 : z^2_1], \nonumber\\ [x^2_2 + \tau_1(s)z^2_2: y^2_2 +  \tau_4(s)z^2_2 : z^2_2])\nonumber
 \end{align} is a split morphism for every $s \in S$.
Denote  
\[
\Delta \coloneq
\overline{
\left\{
\begin{aligned}
&(x_1,y_1,x_2,y_2,\dots,x_{2N},y_{2N}) \in \mathbb{A}^{4N} : \\
&x_1 = x_3 = \cdots = x_{2N-1}, \\
&y_2 = y_4 = \cdots = y_{2N}
\end{aligned}
\right\}
}
\subseteq (\mathbb{P}^2)^{2N},
\]

and $$\Gamma \coloneq \overline{\bigcup_{s_0 \in S} \Orb_{\Phi_{s_0}}(V(x) \times V(y))}.$$
Then $S \times (\Delta \cap (\Gamma \times (\P^2)^{2N-2}))$ is invariant under $\Phi^{\times N}$ and we claim that $$\dim_S(S \times (\Delta \cap (\Gamma \times (\P^2)^{2N-2}))) \leq 2N + \dim(\tau(S)),$$ and then we have $r_{\Phi^{\times N}, \mathcal{C}^N} \leq 2N+\dim(\tau(S))$.  Note that if $\dim(\tau(S)) = 2$, then this is trivially true as $\dim(\Gamma) \leq 4$ and $$\dim(\Delta \cap (\Gamma \times (\P^2)^{2N-2})) \leq 2N + 2.$$  If $\dim(\tau(S)) = 1$, then since our assumption implies that there exists a Zariski dense set of special points in $\tau(S)$, \cite[Theorem 1.2]{DM25} implies that one of the following holds: 
\begin{enumerate}
    \item $\tau(S) = V(d - D)$;
    \item $\tau(S) = V(c - D)$;
    \item $\tau(S) = V(d -c)$;
\end{enumerate}
where $D \in \C $ such that $f_D$ is post-critically finite. In all these cases, it is obvious that $\dim(\Gamma) = 3$. Hence, $\dim_S(S \times (\Delta \cap (\Gamma \times (\P^2)^{2N-2}))) \leq 2N + 1$ and we verified the claim. \\

 Suppose $\dim(\tau(S)) = 2$.
 For any $N \in \N^+$, let $U = S \times \A^{4N}$, where $\Phi^{\times N}|_U$ is a family of split polynomial endomorphisms. Let $$\pi : S \times (\A^2 \times \A^2)^{N} \to S \times (\A^1 \times \A^1)^N$$ denote the projection map that, on each $\A^2 \times \A^2$ factor, it maps $V(x) \times V(y)$ to $(0,0) \in \A^1 \times \A^1$; i.e., it contracts the second and third coordinates of $\A^2 \times \A^2$. 

 Now, it is sufficient to verify that $\pi_* (\hat{T}^{2N + 2}_{\Phi^{\times N}} \wedge [\mathcal{C}^N] |_{U}) \neq 0$. Note that 
 $$ \mathcal{C}^N|_{U} = \pi^*( S \times \{(0,0)\}^N).$$
Also, $$\hat{T}_1 \coloneq \bigwedge^{N-1}_{i = 0}(p_{4i + 2}^* \hat{T}_{f_{\tau_2(s)}} \wedge p^*_{4i + 3}\hat{T}_{f_{\tau_3(s)}} )\leq \hat{T}^{2N}_{\Phi^{\times N}}|_U,$$
where $p_j: S \times \A^{4N} \to S \times \A^1 $ is the projection to the $j$-th factor. 
Hence, it is sufficient to verify that 
$$ \pi_*(\hat{T}^2_{\Phi^{\times N}} \wedge \hat{T}_1 \wedge \pi^*[S \times \{(0,0)\}^N])|_U  \neq 0.$$

Let $$G(s,x_1, y_1, \dots, x_N, y_N) \coloneq \lim_{n \to \infty} 2^{-n}\log\max_{1 \leq i \leq N}\{1, |f^n_{\tau_1(s)}(x_i)|,|f^n_{\tau_4(s)}(y_i)|\}.$$

Note that, by the projection formula, 
\begin{align}\label{eq: ineq-current-1}
    \pi_*\bigl( \hat{T}_{\Phi^{\times N}}^{\wedge 2} \wedge \hat{T}_1 \bigr)\big|_U
    &= \pi_*\Biggl( \Bigl( \pi^* \hat{T}_{\Psi} 
        + \sum_{i=1}^N \bigl( p_{4i+2}^* \hat{T}_{f_{\tau_2(s)}} 
        + p_{4i+3}^* \hat{T}_{f_{\tau_3(s)}} \bigr) \Bigr)^{\!\wedge 2} 
        \wedge \hat{T}_1 \Biggr)\Big|_U \\[4pt]
    &\geq \pi_*\bigl( (\pi^* \hat{T}_{\Psi})^{\wedge 2} \wedge \hat{T}_1 \bigr)\big|_U 
      \nonumber  \\[4pt]
    &\geq \hat{T}_{\Psi}^{\wedge 2}\big|_U 
       \nonumber
\end{align}
where 
$$\Psi : S \times (\A^2)^N \to S \times (\A^2)^N$$
such that 
$$ \Psi(s, x_1, y_1, \dots, x_N, y_N) = (s, f_{\tau_1(s)}(x_1), f_{\tau_4(s)}(y_1), \dots, f_{\tau_1(s)}(x_N), f_{\tau_4(s)}(y_N))$$
and
$$ \hat{T}_{\Psi}|_U  = dd^c G(s,x_1, y_1, \dots, x_N, y_N). $$
Let $\mu : S \times \A^{2N} \to \A^2 \times \A^{2N}$ be the map given by 
$$ \mu(s,x_1, y_1, \dots, x_N, y_N) = (\tau(s), x_1, y_1, \dots, x_N, y_N).$$

Hence, with the inequality (\ref{eq: ineq-current-1}), it is sufficient to show that 
$$\mu_* \Bigl( \pi_* \bigl( \hat{T}_{\Phi^{\times N}}^{\wedge 2} \wedge \hat{T}_1 \wedge \pi^*[S \times \{(0,0)\}^N] \bigr) \Bigr)\big|_U
\;\geq\;
\mu_* \bigl( \hat{T}_{\Psi}^{\wedge 2} \wedge [S \times \{(0,0)\}^N] \bigr)\big|_U
\;\neq\; 0$$

Let $(s_1, s_2) $ denote the two coordinates of $\tau(S) = \A^2$. Then, we have 
\begin{align}
    \mu_* \left( \hat{T}^2_{\Psi}|_U \wedge [S \times \{(0,0)\}^N] \right)  \geq dd^cG(s_1,s_2,0,\dots,0) \wedge dd^c G(s_1,s_2, 0,\dots,0)
\end{align}
which is a measure on $\A^2$, where
$$ G(s_1,s_2,0, \dots, 0) \coloneq \lim_{n \to \infty} 2^{-n}\log\max \{1, |f^n_{s_1}(0)|,|f^n_{s_2}(0)|\}$$
$$ = \max\{u(s_1),v(s_2)\}$$
and
$$ u(s_1) \coloneq \lim_{n \to \infty} 2^{-n} \log \max\{1, |f^n_{s_1}(0)|\} ,$$
$$v(s_2) \coloneq \lim_{n \to \infty} 2^{-n} \log \max\{1, |f^n_{s_2}(0)|\} .$$

Then $$(dd^c G)^{\wedge2}|_{u(s_1) = v(s_2)} \geq dd^c u \wedge dd^c v |_{u(s_1) = v(s_2)},$$
where the RHS is equal to 
$$ (\mu_{bif} \wedge \mu_{bif}) |_{u(s_1) = v(s_2)},$$
and $\mu_{bif}$ is the bifurcation measure on the Moduli space of quadratic polynomials parametrized by $c \in \C$ of $f_c(x) = x^2 + c$.

Since $$\mathcal{M} \times \mathcal{M} \subseteq \{(s_1,s_2): u(s_1) = v(s_2)\},$$
where $\mathcal{M}$ is the Mandelbrot set of quadratic polynomials, as $u = v = 0$ when restricted on $\mathcal{M}$, and $\mu_{bif}(\mathcal{M}) = 1$, we have 
$$ \mu_{bif} \wedge \mu_{bif}(\mathcal{M} \times \mathcal{M}) = 1.$$
Thus, 
$$(\mu_{bif} \wedge \mu_{bif}) |_{u(s_1) = v(s_2)} \neq 0,$$
and, therefore, 
$$ (dd^c G)^2 \neq 0.$$\\

 Similarly, suppose $\dim(\tau(S)) =1$. Then, it is sufficient to show 
$$\pi_* (\hat{T}^{2N + 1}_{\Phi^{\times N}} \wedge [\mathcal{C}^N] |_{U}) \neq 0 .$$ The exact same argument shows that it is sufficient to verify that
$$ \pi_* \hat{T}_{\Phi^{\times N}}|_U \wedge [S \times \{(0,0)\}^N] \neq 0.$$
Note that 
$$\pi_* \hat{T}_{\Phi^{\times N}}|_U \wedge [S \times \{(0,0)\}^N] = dd^c G(s,0,\dots, 0) \neq 0 ,$$
since $s \to G(s,0,\dots,0)$ is again subharmonic, non-constant and bounded from below.\\

{\bf Case \RNum{2}:} Now, suppose $\tau(S) \subseteq \left( V(b) \cap V(c) \cap V(d)\right) $. 
Let \[
\Delta_y \coloneq
\overline{
\left\{
\begin{aligned}
&(x_1,y_1,\dots,x_{2N},y_{2N}) \in \mathbb{A}^{4N} : \\
&y_{2i}x_{2j} = x_{2i}y_{2j}, \\
&x_{2i-1} = 0, \\
&\forall\, i,j \in \mathbb{N}^+, \; 1 \le i,j \le N
\end{aligned}
\right\}
}
\subseteq (\mathbb{P}^2)^{2N}.
\]

In this case, for every $N \in \N^+$, the subvariety $S \times \Delta_y$ is invariant under $\Phi$. Hence $r_{\Phi^{\times N}, \mathcal{C}^N} \leq 2N + 1$. Thus, it is sufficient to show that 
$$ \hat{T}^{2N + 1}_{\Phi^{\times N}} \wedge [\mathcal{C}^N] \neq 0.$$
Take $U \coloneq S \times (\A^2 \times \P^2 \setminus \{[0:0:1]\})^{N}$. It is sufficient to show that 
$$ \hat{T}^{2N + 1}_{\Phi^{\times N}} \wedge [\mathcal{C}^N]|_{U} \neq 0.$$
Let $$\pi_{H_\infty} : U \to S \times (V(y) \times H_\infty)^{N},$$
and note that, similar to the argument in the above case, 
$$ (\pi_{H_\infty})_* \hat{T}^{2N + 1}_{\Phi^{\times N}} \geq \hat{T}_{\Psi},$$
where $\Psi : S \times (\A^2)^N \to S \times (\A^2)^N$ is given by 
$$ \Psi(s,x_1,y_1, \dots, x_N, y_N) = (s, f_0(x_1), f_{\tau_2(s)}(y_1), \dots, f_0(x_N), f_{\tau_2(s)}(y_N)).$$
Then, by the projection formula,
\begin{align*}
  (\pi_{H_\infty})_* \left (  \hat{T}^{2N + 1}_{\Phi^{\times N}} \wedge [\mathcal{C}^N]|_{U} \right) =  (\pi_{H_\infty})_* \left (  \hat{T}^{2N + 1}_{\Phi^{\times N}} \wedge \pi^*_{H_\infty}[S \times \{(0,0)\}^N]|_{U} \right)\\\geq \hat{T}_{\Psi}\wedge [S \times \{(0,0)\}^{2N}] .  
\end{align*}

Note that 
$$\hat{T}_{\Psi} \wedge [S \times \{(0,0)\}^{2N}] = dd^c G(s,0,\dots 0), $$
where $$G(s,x_1,x_2, \dots, x_{2N}) \coloneq \lim_{n \to \infty } 2^{-n} \log \max_{1 \leq i \leq N} \{1, |f^n_{0}(x_{2i-1})|, |f^n_{\tau_2(s)}(x_{2i})|\}$$ and $x_i$ is in the $i$-th factor of $\A^{2N}$. 

Since $f_{\tau_2(s)}$ is not isotrivial, we have 
 $$s \to G(s,0, \dots, 0)$$ is subharmonic, nonconstant, and bounded from below and hence 
 $$ dd^c G(s,0,0,\dots)\neq 0.$$\\

 {\bf Case \RNum{3}:} Lastly, suppose $\tau(S) \subseteq V(d) \cap V(a) \cap V(c)$. Let 
\[
\Delta_b \coloneq 
\overline{
\left\{
\begin{aligned}
&(x_1,y_1,\dots,x_{2N},y_{2N}) \in \mathbb{A}^{4N} : \\
&y_{2i}x_{2j}^2 = x_{2i}^2 y_{2j}, \\
&x_{2i-1} = 0, \\
&\forall\, i,j \in \mathbb{N}^+, \; 1 \le i,j \le N
\end{aligned}
\right\}
}
\subseteq (\mathbb{P}^2)^{2N}.
\]

 Then $\Phi (S \times \Delta_b) \subseteq S \times  \Delta_b$ and note that each $F_b(x,y) \coloneq (x^2, y^2 + bx) $ maps $V(y^2 - kx)$ to $V(y^2 - f_b(k)x)$ for each $k \in \C$. Hence $$r_{\Phi^{\times N}, \mathcal{C}^N} \leq \dim_S(S \times \Delta_b) = 2N + 1.$$ Thus, it is sufficient to show that 
 $$ \hat{T}^{2N+1}_{\Phi^{\times N}} \wedge [\mathcal{C}^N] \neq 0. $$
 
 Note also that $\Phi^{\times N}_*([\mathcal{C}^N]) = c_1 [\mathcal{D}^N]$,
 where $c_1$ is a non-zero constant and $\mathcal{D}^N \coloneq \overline{\{{s} \times (V(x_1) \times V(y^2_2 - \tau_3(s)x_2))^{N} : s \in S\}} \subseteq S \times (\P^{2})^{2N}$.
 Let $S' \coloneq S \setminus \tau^{-1}_3(0)$ and $U \coloneq S' \times (\A^2 \times (\C^*)^2)^N \subseteq S \times (\P^2)^{2N}$. It is sufficient to show that 
 $$\Phi^{\times N}_* \left( \hat{T}^{2N+1}_{\Phi^{\times N}} \wedge [\mathcal{C}^N]\right) |_{U} = c_2 \left( \hat{T}^{2N+1}_{\Phi^{\times N}} \wedge [\mathcal{D}^N]\right) |_{U} \neq 0,$$
 for some non-zero constant $c_2$.

 Let $\pi : U \to S' \times \A^{2N} $ be the projection map such that on each $\A^2 \times (\C^*)^2$ factor in $U$, we have 
 $$ \pi|_{S' \times \A^2 \times (\C^*)^2 } (s, x_1,y_1, x_2,y_2) = (s, x_1, y^2_2 /x_2). $$
 Then, it is sufficient to show that 
 $$ \pi_*\left(\left(\hat{T}^{2N+1}_{\Phi^{\times N}} \wedge [\mathcal{D}^N]\right) |_{U}\right) \neq 0.$$
 Again, a similar computation as above cases shows that 
 $$ \pi_*\hat{T}_{\Phi^{\times N}}|_U = \sum^{2N}_{i = 1}(p_{2i-1}^*\hat{T}_{f_{\tau_1(s)}}  + p^*_{2i}\hat{T}_{f_{\tau_3(s)}} ),$$
 where $p_i :S' \times \A^{2N} \to S' \times \A^1$ denote the projection to the $i$-th $\A^1$ factor for each $i \in \{1,2, \dots, 2N\}$.
 Thus, by the projection formula, it is sufficient to show that 
$$ \left(\pi_*\hat{T}^{2N+1}_{\Phi^{\times N}}|_U\right) \wedge [\{(s,0,\tau_3(s),0,\tau_3(s), \dots ,0,\tau_3(s): s \in S'\}] \neq 0 .$$
Again, similarly, $\pi_* \hat{T}^{2N+1}_{\Phi^{\times N}} \geq \hat{T}_\Psi$, where $$\Psi : S' \times (\A^2)^N \to S'  \times (\A^2)^N$$ is given by 
$$ \Psi(s, x_1, y_1, \dots, x_N ,y_N) = (s , f_0(x_1), f_{\tau_3(s)}(y_1), \dots, f_0(x_N), f_{\tau_3(s)}(y_N)).$$
Thus, it is sufficient to show that 
$$ \hat{T}_\Psi \wedge [\{(s,0,\tau_3(s),0,\tau_3(s), \dots ,0,\tau_3(s): s \in S'\}] \neq 0$$
Let $f_{i,s} = f_{\tau_1(s)}$ if $i$ is odd, and $f_{i,s} = f_{\tau_3(s)}$ if $i$ is even for $i \in \{1,2, \dots, 2N\}$.
 Since $f_{\tau_3(s)}$ is not isotrivial, we have 
$$ s \to G(s, 0,\tau_3(s), \dots, 0, \tau_3(s)) $$
is non-constant, subharmonic and bounded from below, where 
$$ G(s, x_1, \dots, x_{2N}) \coloneq \max_{1 \leq i \leq 2N}\{G_{i,s}(x_i)\},$$
$$G_{i,s}(x) \coloneq \lim_{n \to \infty} 2^{-n} \log \max\{1 , |f^n_{i,s}(x)|\}.$$

Hence 
\begin{align}
    \hat{T}_\Psi \wedge [\{(s,0,\tau_3(s),0,\tau_3(s), \dots ,0,\tau_3(s): s \in S'\}] \nonumber \\= dd^cG(s, 0, \tau_3(s), \dots, 0, \tau_3(s)) \neq 0.
\end{align}

\end{proof}

\section{Height and  preperiodic points for  families of polynomial skew products}\label{sec:unlikelypolyskew}
\subsection{Absolute values on number fields}\label{subsec:absolutevalues}
	Let $K$ be a number field and $\overline{K}$ be a fixed algebraic closure of $K$. Let $M_K$ be the set of places of $K$, that is, an equivalence class of nontrivial absolute values on $K$. For each place $v\in M_K$, $K_v$ denotes the corresponding completion of $K$ with respect to the absolute value $|\cdot|_v$ (determined up to equivalence). The absolute value $|\cdot|_v$ on $K$ is either standard archimedean absolute value or the $p$-adic absolute value satisfying $|p|_p=p^{-1}$ when restricted to $\Q$. For any non-zero element $x\in K$, the product formula 
	$$\prod_{v\in M_K}|x|^{N_v}_v=1$$ holds.  Here $N_v:=[K_v:\Q_v]$. Denote by $\C_v$ the completion of the algebraic closure $\overline{K}$ of the number field $K$ with respect to the absolute value $|\cdot|_v$. By abuse notation, we still denote by $|\cdot|_v$ the unique extension to $\C_v$ of the absolute value on $K$. Note that $\C_v$ is both complete and algebraically closed. For $(x_1,x_2,...,x_n)\in \C_v^n$, we write $$\|(x_1,x_2,...,x_n)\|_v:=\max\{|x_1|_v,|x_2|_v,...,|x_n|_v\}.$$
	\subsection{Activity and bifurcation} \label{sec:setupactivitybifurcation}
		Let $M=\mathbb{A}^1_{\mathbb{C}}$. 
		We denote by $F_t:=F(t,\cdot):M\times\mathbb{A}^2\rightarrow\mathbb{A}^2$ a family of  regular polynomial skew product of degree $d\geq 2$ parametrized by $t$ given by $$F_t(x,y)=(f_t(x),g_t(x,y))$$ where $f_t(x)=x^d+O(x^{d-1})\in \mathbb{C}[x,t]$ and $g_t(x,y)=y^d+O(y^{d-1})\in\mathbb{C}[x,y,t]$. Let $P_t:M\rightarrow \mathbb{A}^2(\mathbb{C}[t])$ be a marked point defined by $$P_t(t):=(a(t),b(t)).$$ 
		We impose a technical condition on a marked point $P_t$ to ensure that the bifurcation measure associated to $(F_t,P_t)$ is non-trivial and it satisfies:
		\begin{assump}\label{assump:initialmarkedpoint} Suppose that $P_t=(a(t),b(t))$ is a marked point so that the sequence $\{f_t^n(a(t))\}_n$ is not normal for $t\in M$.
		\end{assump}
	 In terminology \cite[section 2.1]{BD13}, we call $a(t)$ active. We observe that the assumption \ref{assump:initialmarkedpoint} also implies that the sequence $\{F^n_t(P_t)\}_n$ is not normal for $t\in M$.  Following \cite{BD13},  the assumption \ref{assump:initialmarkedpoint} implies that the bifurcation measure 
	 $$\mu_{a(t)}:=dd^c_t G(a(t))$$ is nonzero. Here $$G(a(t))=\lim_{n\rightarrow\infty}d^{-n}\log\max\{1,|f^n_t(a(t))|\}.$$ Analogously, we obtain that the measure $$\mu_{F_t}:=dd^c_t G(P_t)$$ is nonzero where $$G(P_t):=\lim_{n\rightarrow\infty}d^{-n}\log\max\{1,\|F^n_t(P_t)\|\}.$$  This  is due to the fact that the support of $\mu_{F_t}$ is the set of $t\in M$ so that $\{t\mapsto F^n_t(P_t)\}_n$ fails to be normal (cf. \cite[Proposition-Definition 3.1, Theorem 3.2]{DF08} and \cite[Lemma 1.3]{Ga23}).
     \begin{rmk} 
     The failure of the sequence  $\{F^n_t(P_t)\}_{n\geq0}$ to be normal does not necessarily imply that sequence of holomorphic family $\{f_t^n(a(t))\}_{n\geq0}$  is not normal. For instance, let us consider a holomorphic family of polynomial skew product $F_t:\mathbb{A}^2_{\mathbb{C}}\rightarrow\mathbb{A}^2_{\mathbb{C}}$ parametrized by $t\in \mathbb{C}$ and defined by $$F_t(x,y)=(x^2,y^2+x+t).$$ Now, given an initial marked point $P_t(t)=(0,0).$ Let $\pi_2$ be the projection map onto the second factor of $\mathbb{A}^2$. It is known that the sequence $$\{\pi_2(F^n_t(P_t))\}_n=\{t,t^2+t,...\}$$ fails to be normal in neighborhood of $t$ intersecting the boundary of the  Mandelbrot set. In contrast, the sequence $$\{f_t^n(a(t))\}_n=\{0,0,...\}$$ forms a normal family. 
     \end{rmk}
  
\subsection{Height function associated to adelic line bundle}
Let $K$ be a number field and let $X$ be a quasi projective variety over $K$.  For each place $v\in M_K$, we denote by $X^{\text{an}}_v$ the Berkovich analytic space at the place $v$. A metrized line bundle on $X^{\text{an}}_v$ is a pair $\overline{L}=(L,\|\cdot\|_v)_{v\in M_K}$ consisting of a line bundle $L$ on $X$ and a continuous metric $\|\cdot\|_v$ on  $X^{\text{an}}_v$. Let $\overline{L}$ be a nef adelic line bundle on $X$. Define a height function $$h_{\overline{L}}: X(\overline{K})\rightarrow\mathbb{R}$$ by $$h_{\overline{L}}(x)=-\frac{1}{[K(x):K]}\sum_{v\in M_K}\sum_{y\in \text{Gal}(\overline{K}/K)\cdot x \times_K K_v}\log\|s(y)\|_v^{\deg_{K_v}(y)}$$ for any nonzero rational section $s$ of $L$ on $X$ with $x\not\in|\text{div}(s)|$ (i.e., $s$ does not vanish at $x$) and $\text{Gal}(\overline{K}/K)\cdot x \times_K K_v$ is the Galois orbit of $x$ in $K_v$.

\begin{rmk} \label{rem:exampleofsetprep}
     It is worth pointing out that the set $\text{Prep}(P_t)$ could be either finite or infinite. Let $K$ be a number field and let us consider the family of polynomial skew product $F_t: \mathbb{P}^2(K[t])\rightarrow\mathbb{P}^2(K[t])$ $$F_t(x,y)=(x^2,y^2+txy).$$ 
Given any nonzero integer $0\neq a\in\mathbb{Z}$, it is straightforward to check that $$\#\text{Prep}((a,0))=\varnothing$$ because the $F_t$-forward orbit of $(a,0)$ is $\{(a^{2^n},0)\}_{n\geq 1}$ which is infinite for all $t\in \overline{K}$. On the other hand, one can check that  $$\#\text{Prep}((0,1))=\infty$$ because $(0,1)$ is fixed by $F_t$ for all $t\in \overline{K}.$
\end{rmk}
In section \ref{sec:prependo}, we provide a necessary condition for which the set $\text{Prep}(P_t)$ is infinite for a given family of polynomial skew product. In fact, we prove a statement for regular endomorphism on $\mathbb{P}^2$. It will be crucial for stating an unlikely intersection result for a family of polynomial skew product in section \ref{sec:unlikelyinter}.
\subsection{Preperiodic points of regular endomorphisms on $\P^2$ in families}\label{sec:prependo}
\begin{prop}\label{sec:necessarycondforinfiniteprep}
    Let $F_t : \P^2_{\overline{\C(t)}} \to \P^2_{\overline{\C(t)}} $ be a surjective endormophism defined over $\overline{\C(t)}$. Let $p_t \in \P^2_{\overline{\C(t)}}$. Suppose $\overline{\Orb_{F_t}(p_t)}$ is a proper Zariski closed subset of $\P^2_{\overline{\C(t)}}$. Denote $Z_t =\overline{\Orb_{F_t}(p_t)} $. Suppose $(F_t|_{Z_t}, p_t, Z_t)$ is non-isotrivial. Then the set 
    $$ \{s \in \C : p_{s} \in \Prep(F_s)\}$$
    is an infinite set, where $p_s$ and $F_s$ denote the point and endomorphism on $\P^1_\C$ obtained by plugging in $t  =s$ to $F_t$ and $p_t$.
\end{prop}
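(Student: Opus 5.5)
We reduce to the structure of the orbit closure $Z_t$. Since $Z_t=\overline{\Orb_{F_t}(p_t)}$ is a proper Zariski closed subset of $\P^2_{\overline{\C(t)}}$ containing an infinite orbit (or a finite one), it is either finite or of dimension one. If $Z_t$ is finite, then $p_t$ is preperiodic over $\overline{\C(t)}$: there are $m<n$ with $F_t^m(p_t)=F_t^n(p_t)$ as points over the function field. Specializing, this identity holds for all but finitely many $s$ (those where the coefficients or the spread-out model of $p_t,F_t$ fail to be defined or $F_s$ degenerates), so the set of $s$ with $p_s\in\Prep(F_s)$ is cofinite in $\C$, in particular infinite. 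So the real case is $\dim Z_t=1$.

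When $\dim Z_t=1$, we write $Z_t$ as a finite union of irreducible curves, which $F_t$ permutes after discarding finitely many initial iterates. Replacing $p_t$ by $F_t^{m}(p_t)$ and $F_t$ by an iterate $F_t^{k}$ (neither affects whether $p_s$ is preperiodic for $F_s$, outside finitely many $s$), we may assume $Z_t=C_t$ is an irreducible curve with $F_t(C_t)=C_t$ and $p_t\in C_t$ having Zariski dense orbit. Next we pass to the normalization $\nu:\tilde C_t\to C_t$; $F_t|_{C_t}$ lifts to $\tilde f_t:\tilde C_t\to\tilde C_t$ of degree $>1$ (a degree computation: $F_t^*\mathcal O(1)=\mathcal O(d)$ restricts to give degree $d>1$ on $C_t$). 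By Riemann--Hurwitz a curve with a self-map of degree $>1$ has genus $\le1$; the genus-one case forces $F_t|_{C_t}$ to be essentially an isogeny-type (Lattès-like) map, and the genus-zero case gives a rational map $\tilde f_t$ on $\P^1_{\overline{\C(t)}}$ with a marked point $\tilde p_t$. After a base change to a curve $B\to\A^1$, everything is defined over $\C(B)$, and preperiodicity of $p_s$ under $F_s$ is equivalent (off finitely many $s$, avoiding singular points of $C_s$) to preperiodicity of $\tilde p_s$ under $\tilde f_s$.

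Now we invoke the one-dimensional theory: for a non-isotrivial pair $(\tilde f_t,\tilde p_t)$ on $\P^1$ (or on an elliptic curve), with $\tilde p_t$ not preperiodic over the function field, the canonical height $\hat h_{\tilde f_t}(\tilde p_t)>0$ by Baker's/Benedetto's theorem, and the bifurcation current $dd^c_s G_s(\tilde p_s)$ is nonzero on $B$; hence there are infinitely many $s$ where the family is active and, by the standard Montel argument (as in \cite{BD13}, \cite{DF08}), infinitely many $s$ where $\tilde p_s$ lands on a periodic cycle. Concretely, near an active parameter $s_0$, the maps $s\mapsto \tilde f_s^n(\tilde p_s)$ cannot avoid three holomorphically moving preperiodic points, giving infinitely many parameters $s$ with $\tilde p_s$ preperiodic. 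Pushing back through $\nu$ and the iterate reductions gives infinitely many $s$ with $p_s\in\Prep(F_s)$. The elliptic case is handled the same way using Silverman's specialization/Néron--Tate height and density of torsion parameters (Masser--Zannier style), or more simply by the same Montel argument with torsion points moving holomorphically.

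The main obstacle I expect is the step passing from ``non-isotrivial triple $(F_t|_{Z_t},p_t,Z_t)$'' to ``non-isotrivial and active one-dimensional pair $(\tilde f_t,\tilde p_t)$'': one must show non-isotriviality survives normalization and iteration, and that isotriviality of $\tilde f_t$ with $\tilde p_t$ varying still yields activity (here non-isotriviality of the triple should mean the marked point is not constant after a conjugacy, so $\hat h>0$ unless $\tilde p_t$ is preperiodic, and DeMarco's theorem on stability then gives activity). I would handle this by appealing to DeMarco's characterization that a marked point is stable on all of $B$ iff the pair is isotrivial or $\tilde p_t$ is preperiodic.
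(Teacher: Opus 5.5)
Your proposal is correct and follows essentially the same route as the paper: split by the genus of the irreducible orbit-closure curve, then in the genus $0$ and genus $1$ cases pass to a non-isotrivial one-dimensional marked pair and invoke DeMarco. The paper cites \cite[Theorems 1.5 and 1.6]{De16} directly, whereas you unpack them as ``stable $\Leftrightarrow$ isotrivial or preperiodic'' plus a Montel argument; your normalization avoids the paper's indeterminacy bookkeeping, and your computation $\deg(F_t|_{C_t})=d>1$ shows the paper's genus $>1$ case cannot actually occur.
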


\begin{proof}
     Since we can work with the irreducible components of $Z_t$ separately, we can assume that $Z_t$ is irreducible without loss of generality. If the genus of $Z_t$ as a curve in $\P^2_{\overline{\C(t)}}$ is greater than $1$, then $F_t|_{Z_t}$ is of finite order and so for every $s \in \C$, we would have $p_s \in \Prep(F_s)$. 

Suppose the genus of $Z_t$ is $1$, then $F_t$ is birationally conjugated by a map $\pi_t$ defined over $\overline{\C(t)}$ to a one-parameter family of endomorphisms on an elliptic curve. Suppose there are infinitely many $m \in \N$ such that $F^m_t(p_t) \in I(\pi_t)$. Then $p_t$ is preperiodic under $F_t$ as $I(\pi_t) \cap Z_t$ is a finite set. Suppose otherwise that there exists a $m \in \N$ such that $\Orb_{F_t}(F^m_t(p_t)) \cap I(\pi_t) = \emptyset$, we have $\pi_t(F^m_t(p_t))$ is a point in the Elliptic curve. Then, since $(F_t|_{Z_t}, F^m_t(p_t), Z_t)$ is also non-isotrivial, \cite[Theorem 1.5]{De16} implies that there are infinitely many $s \in \C$ such that $\pi_s(F^m_s(p_s))$ is preperiodic under $\pi_s \circ F_s \circ \pi^{-1}_s$, where $\pi_s$, $F_s$ and $p_s$ denote the morphisms and point defined over $\C$ obtained by plugging $t = s$. 

Now, suppose $Z_t$ is of genus $0$, then there exists a birational map $\pi_t$ defined over $\overline{\C(t)}$ such that $\pi_t(Z_t) = \P^1$. Similarly, if there are infinitely many $m \in \N$ such that $F^m_t(p_t) \in I(\pi_t)$, we have $p_t$ is preperiodic under $F_t$. Suppose on the other side that there exists a $m \in \N$ such that $\Orb_{F_t}(F^m_t(p_t)) \cap I(\pi_t) = \emptyset$. The non-isotrivial assumption implies that $(F_t|_{Z_t}, F^m_t(p_t))$ is conjugated by $\pi_t$ to a non-isotrivial dynamical pairs on $\P^1$. Then \cite[Theorem 1.6]{De16} implies that there are infinitely many $s$ such that $p_s$ is preperiodic under $F_s$.
\end{proof}
\begin{example}
    Consider $F_t(x,y) = (x^2 +2tx + t^2 + 1 -t, y^2)$ and $p_t = (1 + t, 0)$. Then we have $Z_t =\overline{\Orb(F_t(p_t))} = V(y)$. Also, $F_t|_{V(y)}(x) = (x+ t)^2 + 1 - t$ and $p_t$ restricted to $V(y)$ is just $1 + t$. Conjugating by $\pi_t (x) = 1 -t$, we have the dynamical pair $(F_t|_{Z_t}, p_t)$ is conjugated to $(x^2 + 1, 1)$ and so it is isotrivial. We see that for any $s \in \C$, we would have $p_s$ is not a preperiodic point of $F_s$ as $1 $ is not preperiodic under $x^2 + 1$.
\end{example}
\begin{example}
    Consider $F_t(x,y) =(x^2 + t, y^2) $ and $p_t = (t^2, 0)$. Since $(x^2 + t, t^2)$ is non-isotrivial, we have that there are infinitely many $s \in \C$ such that $p_s \in \Prep(F_s)$ by Proposition \ref{sec:necessarycondforinfiniteprep} 
\end{example}

\begin{example}
    Consider $F_t(x,y) = (x^2, y^2 + tx + \pi)$ and $p_t = (t,0)$. Notice that for a $s \in \C$, $p_s \in \Prep(F_s)$ only if $s$ is a root of unity or $s = 0$. However, we have the norm of the second coordinate of $F^n_s(p_s)$ for every $s$ that is a root of unity or zero is strictly increasing as $n $ increases. Therefore, none of $s \in \C$ will make $p_s \in \Prep(F_s)$.

    One can also argue that $\Orb_{F_t}(p_t)$ is not contained in a proper Zariski closed subset of $\P^2_{\overline{\C(t)}}$. Suppose for the purpose of contradiction that it is contained in a Zariski closed subset. Since $t$ is not a preperiodic point for $x^2$, we have that there is a polynomial $P_t(x,y)$ over $\overline{\C(t)}$ with $\deg_y(P_t(x,y)) > 0$ such that for any $n \in \N^+$, we have 
    $$ P_t(F^n_t(p_t)) = 0.$$
    Let's write 
    $P_t(x,y) = \sum_{ i =0}^d a_i(t,x)y^i$, where $a_d(t,x)$ is not constantly zero. Then there exists an infinite subset $S \subseteq \N^+$ such that $a_d(t,x_n(t)) \neq 0$ for any $n \in S$, where we denote $F^n_t(p_t) \coloneq (x_n(t), y_n(t))$.

    Now, for any $n \in S$, we have 
    $$ \deg_\pi(P_t(F^n_t(p_t))) = d2^{n -1},$$
    here $\deg_\pi$ denotes the maximum order of $\pi$.
    Hence, $P_t(F^n_t(p_t)) \neq 0$ for any $n \in S$, which is a contradiction.

\end{example}
\subsection{An unlikely intersection result for a one-parameter families of polynomial skew product}\label{sec:unlikelyinter} 
In this subsection, we first provide a brief discussion of the arithmetic equidistribution for quasi projective varieties developed by Yuan-Zhang \cite{YZ26}. It is a pivotal tool to assert an unlikely intersection for our one-parameter family of polynomial skew product.
\begin{thm} \label{thm:YuanZhangequidistribution} \cite[Theorem 5.4.3]{YZ26} Let $X$ be a quasi projective variety over a number field $K$. Let $\overline{L}$ be a nef adelic line bundle on $X$ (whose generic fiber denoted by $\tilde{L}$) such that $\deg_{\tilde{L}}(X)>0$. Let $\{x_n\}_n$ be a generic sequence in $X(\overline{K})$ such that $h_{\overline{L}}(x_n)$ converges to $h_{\overline{L}}(X)$. Then the Galois orbit of $x_n$ is equidistributed to $$d\mu_{\overline{L},v}=\frac{1}{\deg_{\tilde{L}}(X)}c_1(\overline{L})^{\dim(X)}_v$$ in $X^{\text{an}}_v$ for each place $v\in M_K$.
\end{thm}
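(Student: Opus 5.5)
This is \cite[Theorem 5.4.3]{YZ26}, so within the paper we simply invoke it. If I were to reprove it, I would follow the Szpiro--Ullmo--Zhang variational method, adapted to the quasi-projective setting of adelic line bundles in \cite{YZ26}.

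\textbf{Step 1 (the criterion).} Fix a place $v\in M_K$ and a test function $\phi$ on $X^{\text{an}}_v$. I would first reduce to $\phi$ smooth (archimedean $v$) or model-type (non-archimedean $v$) with compact support. Such functions are dense in the continuous compactly supported functions, so it suffices to treat them. The goal is then
\[
\lim_{n\to\infty}\frac{1}{\#\text{Gal}(\overline{K}/K)\cdot x_n}\sum_{y}\phi(y)=\int\phi\, d\mu_{\overline{L},v}.
\]
Here the Galois orbits are the finite sets $\text{Gal}(\overline{K}/K)\cdot x_n\times_K K_v$ of Subsection~\ref{subsec:absolutevalues}.

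\textbf{Step 2 (twisting and the lower bound).} Set $\overline{L}(\epsilon\phi)$ to be $\overline{L}$ with its $v$-metric multiplied by $e^{-\epsilon\phi}$. Since $\phi$ is compactly supported, this is again an adelic line bundle on $X$ in the sense of \cite{YZ26}. It is nef up to an error of order $O(\epsilon^2)$.

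Twisting shifts heights linearly:
\[
h_{\overline{L}(\epsilon\phi)}(x_n)=h_{\overline{L}}(x_n)+\epsilon\int\phi\,d\delta_{x_n}.
\]

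The key input is the quasi-projective arithmetic Hilbert--Samuel formula, or Siu's inequality, applied to the volume $\widehat{\text{vol}}$ of \cite{YZ26}. It gives
\[
\frac{\widehat{\text{vol}}(\overline{L}(\epsilon\phi))}{(\dim X+1)\deg_{\tilde L}(X)}\ \ge\ h_{\overline{L}}(X)+\epsilon\int\phi\,d\mu_{\overline{L},v}-O(\epsilon^2).
\]

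\textbf{Step 3 (the successive minimum).} Positive volume yields small sections of large tensor powers, hence a proper Zariski-closed subset $Z_\epsilon$ off which the height is bounded below by the normalized volume. This is the essential-minimum bound. Because $\{x_n\}$ is generic, it eventually avoids each $Z_\epsilon$. Combining with $h_{\overline{L}}(x_n)\to h_{\overline{L}}(X)$, this gives
\[
\liminf_n \epsilon\int\phi\,d\delta_{x_n}\ \ge\ \epsilon\int\phi\,d\mu_{\overline{L},v}-O(\epsilon^2).
\]
Dividing by $\epsilon>0$ and by $\epsilon<0$ and letting $\epsilon\to0$ gives the equality.

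\textbf{Main obstacle.} The hard step is Step 2 on a quasi-projective $X$. There the metrics are only limits of model metrics on compactifications, with control given by a boundary divisor. Arithmetic volume and Hilbert--Samuel must be made sense of through these limits, and the $O(\epsilon^2)$ error must stay uniform. I would handle this by:
\begin{enumerate}
\item approximating $\overline{L}$ by nef models $\overline{L}_j$ on projective compactifications $X_j$, with $\overline{L}_j\to\overline{L}$ in the boundary topology;
\item applying the projective Yuan/Chen inequality on each $X_j$;
\item passing to the limit using continuity of $\widehat{\text{vol}}$ and of intersection numbers, which is where the hypothesis $\deg_{\tilde L}(X)>0$ is needed.
\end{enumerate}
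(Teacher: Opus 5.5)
Your proposal matches the paper, which gives no proof of this statement and simply invokes \cite[Theorem 5.4.3]{YZ26}; your optional sketch is the standard Szpiro--Ullmo--Zhang variational route on which that theorem rests: twist by $\epsilon\phi$, bound the volume from below by the quasi-projective Hilbert--Samuel/Siu inequality, then use the fundamental inequality off a proper closed subset, which a generic sequence eventually avoids. One imprecision: the $O(\epsilon^2)$ error does not come from the twisted bundle being ``nef up to $O(\epsilon^2)$'', but from writing $\overline{\mathcal{O}}(\phi)$ as a difference of two nef adelic line bundles and applying Siu's inequality $\widehat{\mathrm{vol}}(\overline{A}-\overline{B})\ge \overline{A}^{\,n+1}-(n+1)\,\overline{A}^{\,n}\cdot\overline{B}$.
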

Return to our setting. Given a family of regular polynomial skew product $F_t$ parametrized by $t\in \mathbb{A}^1$ of degree $d>1$ and initial point $P_t=(a(t),b(t))$ satisfying assumption (\ref{assump:initialmarkedpoint}). Let us consider $F:\mathbb{A}^1\times \mathbb{P}^2\rightarrow \mathbb{A}^1\times \mathbb{P}^2$ given by $$F(t,[x:y])=(t,[f_t(x):g_t(x,y)]).$$ Define $\overline{L}_F$ as $$\overline{L}_F:=\lim_{i\rightarrow\infty}\frac{1}{d^i}(F^i)^*\overline{\mathcal{O}_{\mathbb{P}^2}(1)}.$$ It follows that $\overline{L}_F$ is well-defined and a nef adelic line bundle by \cite[Theorem 6.1.1]{YZ26}. Note also that $\overline{L}_F$ is the $F$-invariant extension of $\mathcal{O}_{\mathbb{P}^2}(1)$ in the sense of \cite[Theorem 6.1.1]{YZ26}. Let $i:\mathbb{A}^1\rightarrow\mathbb{A}^1\times\mathbb{P}^2$ be a section defined by $$i(t)=(t,[a(t):b(t):1]).$$ Notice that $\overline{L}_{\text{par}}:=i^*\overline{L}_F$ is again nef (as it is a pullback of nef line bundle under morphism). In order to apply Theorem \ref{thm:YuanZhangequidistribution}, the equidistribution of small points to $\overline{L}_{\text{par}}$, it is sufficient to check the following item:\\
(i) (non-degeneracy condition) This can be checked from computing $$\deg_{\widetilde{L}_{\text{par}}}(\mathbb{A}^1)=\widetilde{L}_{\text{par}}=\int_{\mathbb{C}} c_1(\overline{L}_{\text{par}})>0$$ where the positivity follows from our assumption that the sequence of holomorphic endomorphism $\{t\mapsto F^n_t(P_t)\}_n$ is not normal for $t\in \mathbb{C}$ and hence the bifurcation measure is nonzero.\\
(ii) (small height) Suppose that $\{t_n\}_n\subset \overline{K}$ is an infinite distinct sequence of algebraic parameters such that $P_t$ is $F_t$-preperiodic.  It is true that $h_{\overline{L}_{\text{par}}}(\mathbb{A}^1)=0.$ This can be achieved by using the fundamental inequality. From our assumption and the definition of height, we see that$$\lim_{n\rightarrow\infty}h_{\overline{L}_{\text{par}}}(t_n)=0.$$ Invoking the number field case of \cite[Theorem 5.3.3]{YZ26}, we conclude $$h_{\overline{L}_{\text{par}}}(\mathbb{A}^1)\leq \sup_{U\subseteq \mathbb{A}^1}\inf_{t\in U(\overline{\mathbb{Q}})}h_{\overline{L}_{\text{par}}}(t)=0$$  where $U$ runs over all open Zariski open subsets of $\mathbb{A}^1$. Note that  we have employed the fact that $\overline{L}_{\text{par}}$ is nef and degenerate 
as well as the generic sequence $\{t_n\}_n$ of small points.  The nefness of $\overline{L}_{\text{par}}$ also yields $$h_{\overline{L}_{\text{par}}}(\mathbb{A}^1)\geq0$$ by \cite[Theorem 4.1.1]{YZ26}. Therefore, $$h_{\overline{L}_{\text{par}}}(\mathbb{A}^1)=0$$ as desired.\\

\noindent Denote by $$h_{P_t}(t):=h_{\overline{L}_{\text{par}}}(t)$$ our good height on the parameter space $\mathbb{A}^1$.	It follows from  the specialization result of \cite[Lemma 6.2.1]{YZ26} that  $$h_{P_t}(t)=0\Longleftrightarrow F^n_t(P_t)=F^m_t(P_t),\,\,\,0\leq m<n.$$ 
		Thus, we write $$\text{Prep}(P_t):=\{t\in \overline{K}: F_t^m(P_t)=F_t^n(P_t)\,\,\,0\leq m<n\}=\{t\in \overline{K}:h_{P_t}(t)=0\}.$$
		We are ready to prove our main result:
\begin{thm}\label{thm:unlikelypolyskewtwoinitialpoints} Let $K$ be a number field. Suppose that $P_t, Q_t\in \A^2(K[t])$ are satisfied by the assumption \ref{assump:initialmarkedpoint} and the sets $\text{Prep}(P_t)$ and $\text{Prep}(Q_t)$ are infinite. The following are equivalent:
			\begin{enumerate}
				\item[(a)] $\#\text{Prep}(P_t)\cap \text{Prep}(Q_t)=\infty$;
				\item[(b)] $\text{Prep}(P_t)= \text{Prep}(Q_t)$;
				\item[(c)] $h_{P_t}=h_{Q_t}$.
			\end{enumerate}
		\end{thm}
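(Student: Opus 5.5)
The plan is to prove (a)$\Rightarrow$(c)$\Rightarrow$(b)$\Rightarrow$(a). The implications (b)$\Rightarrow$(a) and (c)$\Rightarrow$(b) are formal. By hypothesis $\text{Prep}(P_t)$ is infinite, so equality of the sets gives an infinite intersection. By the specialization statement (\cite[Lemma 6.2.1]{YZ26}) recorded before the theorem, $\text{Prep}(P_t)=\{t: h_{P_t}(t)=0\}$, and similarly for $Q_t$. Hence $h_{P_t}=h_{Q_t}$ forces $\text{Prep}(P_t)=\text{Prep}(Q_t)$. All the work is in (a)$\Rightarrow$(c).

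\emph{Step 1 (equidistribution).} Assume $\text{Prep}(P_t)\cap\text{Prep}(Q_t)$ is infinite and choose a sequence $\{t_n\}$ of distinct common parameters. It is generic in $\mathbb{A}^1$, and $h_{P_t}(t_n)=h_{Q_t}(t_n)=0$. By items (i) and (ii) above, $\overline{L}_{\text{par},P}:=i_P^*\overline{L}_F$ and $\overline{L}_{\text{par},Q}:=i_Q^*\overline{L}_F$ are nef, have positive degree (Assumption \ref{assump:initialmarkedpoint}), and have $h(\mathbb{A}^1)=0$. So Theorem \ref{thm:YuanZhangequidistribution} applies to both. The Galois orbits of $t_n$ equidistribute at every place $v$ to both normalized measures, so
\[
\frac{c_1(\overline{L}_{\text{par},P})_v}{\deg \widetilde L_{\text{par},P}}=\frac{c_1(\overline{L}_{\text{par},Q})_v}{\deg \widetilde L_{\text{par},Q}}\qquad\text{for all } v\in M_K .
\]

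\emph{Step 2 (from measures to local potentials).} Write $g_{P,v}(t)=G_v(P_t)=\lim d^{-n}\log\max\{1,\|F^n_t(P_t)\|_v\}$, and similarly $g_{Q,v}$. These are continuous subharmonic potentials on $\mathbb{A}^{1,\mathrm{an}}_v$ with $dd^c g_{P,v}=c_1(\overline{L}_{\text{par},P})_v$, and $g_{P,v}=0$ off a finite set of places. Put $\lambda=\deg\widetilde L_{\text{par},P}/\deg\widetilde L_{\text{par},Q}$. Then $g_{P,v}-\lambda g_{Q,v}$ is harmonic on all of $\mathbb{A}^{1,\mathrm{an}}_v$ and grows like $O(\log|t|_v)$, so it equals a constant $c_v$. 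This is Liouville on $\C$ and on Berkovich $\mathbb{A}^1$. A common preperiodic parameter $t_1$ has bounded orbits for both points at every place, so $g_{P,v}(t_1)=g_{Q,v}(t_1)=0$ and therefore $c_v=0$. Thus $g_{P,v}=\lambda g_{Q,v}$ for every $v$. Summing over the places and the Galois orbit in the height formula gives $h_{P_t}=\lambda h_{Q_t}$.

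\emph{Step 3 (showing $\lambda=1$; main obstacle).} Equidistribution alone only gives proportionality, so this is where the real input is needed. The total masses are the growth rates at infinity:
\[
\deg\widetilde L_{\text{par},P}=\lim_n d^{-n}\deg_t F^n_t(P_t),
\]
where $\deg_t$ is the maximum $t$-degree of the coordinates, and equivalently $g_{P,v}(t)=\deg\widetilde L_{\text{par},P}\log|t|_v+O(1)$ as $|t|_v\to\infty$. I would first try to pin down the leading asymptotics of $g_{P,v}$ and $g_{Q,v}$ at $t=\infty$ at an archimedean place, using the skew-product structure. The horizontal part $f_t$ is a one-variable family, and the vertical Green function splits as $\max$ of the horizontal Green function at $a(t)$ and a vertical Böttcher-type term in $b(t)$. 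Then I would compare the $O(1)$ terms: the identity $g_{P,v}=\lambda g_{Q,v}$ must hold exactly, including constants at infinity, and it must persist along the infinite common zero set, which accumulates on the bifurcation locus where both functions vanish to first order. If that asymptotic comparison does not immediately force $\lambda=1$, the fallback is to evaluate the identity at a non-archimedean place of bad reduction for $P_t$. There $g_{P,v}$ is given by an explicit piecewise-linear formula in $\log|t|_v$ with rational slopes determined by the iterates. I expect matching the break points and slopes of these two piecewise-linear functions to show that the two slopes at infinity coincide. That yields $\lambda=1$ and hence $h_{P_t}=h_{Q_t}$.
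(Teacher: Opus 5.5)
Your Step~3 is where the proof breaks, and it cannot be repaired: $\lambda=1$ is false in general, so (a)$\Rightarrow$(c) fails as literally stated. Take $F_t(x,y)=(x^2+t,\,y^2)$, $P_t=(0,0)$ and $Q_t=F_t(P_t)=(t,0)$.

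\begin{itemize}
\item Both first coordinates lie on the critical orbit of $x^2+t$, so both are active.
\item $Q_{t_0}$ is preperiodic exactly when $P_{t_0}$ is. Hence $\text{Prep}(P_t)=\text{Prep}(Q_t)$ is infinite, and (a) and (b) hold.
\item The invariance $F^*\overline{L}_F\simeq d\,\overline{L}_F$ together with functoriality gives $h_{Q_t}(t_0)=h_{\overline{L}_F}\bigl(F(t_0,P_{t_0})\bigr)=2\,h_{P_t}(t_0)$ for every $t_0\in\overline{K}$.
\item $h_{P_t}(1)>0$, since $0$ is not preperiodic under $x^2+1$. So $h_{P_t}\neq h_{Q_t}$ and (c) fails.
\end{itemize}

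In your notation $\lambda=\deg\widetilde L_{\text{par},P}/\deg\widetilde L_{\text{par},Q}=1/2$, because $\deg_t f_t^n(0)=2^{n-1}$ and $\deg_t f_t^n(t)=2^{n}$. Neither proposed tool can avoid this. The growth rate of $g_{P,v}$ at $t=\infty$ is its total Laplacian mass $\deg\widetilde L_{\text{par},P}$, at archimedean and non-archimedean places alike. So matching slopes at infinity is literally the statement $\lambda=1$. The identity $g_{P,v}=\lambda g_{Q,v}$ is already compatible with all break points and constants.

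What Steps~1--2 actually prove is the correct version of the result: $h_{P_t}=\lambda h_{Q_t}$ with $\lambda>0$. Equivalently, the normalized heights $h_{P_t}/\deg\widetilde L_{\text{par},P}$ and $h_{Q_t}/\deg\widetilde L_{\text{par},Q}$ coincide. This still gives (a)$\Leftrightarrow$(b), since proportional heights have the same zero set. So (c) should be restated, not proved via $\lambda=1$.

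There is one harmless slip. $g_{P,v}$ is not identically $0$ at almost all places, since $dd^c g_{P,v}$ has mass $\deg\widetilde L_{\text{par},P}>0$ at every place. You never use this, because you kill $c_v$ place by place at a common preperiodic parameter.

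Your route also differs from the paper's. The paper perturbs by $\epsilon\,\mathcal{O}(1)$ to reach the ample case and applies the Yuan--Zhang Calabi theorem to get proportional metrics. It then removes a single global additive constant using the common small sequence. Your Liouville argument on $\mathbb{C}$ and Berkovich $\mathbb{A}^1$ is more elementary and fixes the constants locally.

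Note that the paper reaches equality only by passing from the two equidistribution statements directly to $c_1(\overline{L}_{\text{par},P_t})_v=c_1(\overline{L}_{\text{par},Q_t})_v$. That step tacitly assumes the two degrees agree, which is exactly the normalization you correctly flagged and which the example above shows can fail.
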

\begin{proof} 		(a)$\Longrightarrow$(c) Suppose that $\{t_n\}_n$ is an infinite distinct sequence in $\overline{K}$ so that $P_t$ and $Q_t$ are both preperiodic points of $F_t$. Then we know that $$\lim_{n\rightarrow\infty} h_{P_{t_n}}(t_n)=0\quad\text{and}\quad\lim_{n\rightarrow\infty} h_{Q_{t_n}}(t_n)=0.$$
		From our discussion above, we have the sequence $\{t_n\}$ is equidistributed with respect to both
		$$d\mu_{\overline{L}_{\text{par},P_t},v}=\frac{c_1(\overline{L}_{\text{par},P_t})_v}{\deg_{\widetilde{L}_{\text{par},P_t}}(\mathbb{A}^1)}\quad\text{and}\quad d\mu_{\overline{L}_{\text{par},Q_t},v}=\frac{c_1(\overline{L}_{\text{par},Q_t})_v}{\deg_{\widetilde{L}_{\text{par},Q_t}}(\mathbb{A}^1)}$$ for each place $v\in M_K$, by applying  Theorem \ref{thm:YuanZhangequidistribution}. Thus it yields the equality of measures \begin{equation}
		    c_1(\overline{L}_{\text{par},P_t})_v=c_1(\overline{L}_{\text{par},Q_t})_v\label{eq:equalmeasures}
		\end{equation} for all places $v\in M_K$. 
        In order to apply the Calabi theorem (cf. \cite[Corollary 2.2]{YZ17} and \cite[Corollary A.6.2]{YZ26}), we fix an embedding of $\mathbb{A}^N$ into $\mathbb{P}^N$ ($N\geq1$) and we work with a perturbed line bundle $$L_{F,\epsilon}:=L_F+\epsilon \mathcal{O}_{\mathbb{P}^2}(1)$$ where $\epsilon>0$. Notice that $L_{F,\epsilon}$ is ample while $L_F$ is  nef. Furthermore, $\overline{L}_{\text{par},\epsilon}=i^*\overline{L}_{F,\epsilon}$ is again ample as it is a pullback of an ample bundle under finite morphism $i$. Using equality (\ref{eq:equalmeasures}), the  measures of the perturbed metrized line bundle $\overline{L}_{\text{par},\epsilon}$ become
        $$c_1(\overline{L}_{\text{par},P_t})_v+\epsilon  \rho_v=c_1(\overline{L}_{\text{par},Q_t})_v+\epsilon  \rho_v$$ where $$\rho_v:=\begin{cases} \omega_{\text{FS}}\, \, \text{the Fubini-study form on}\,\, \mathbb{P}^1,&\mbox{if}\,\,v\,\,\text{is Archimedean} \\ \delta_{\text{Gauss}}\,\, \text{the Dirac measure at Gauss point},&\mbox{if}\,\,v\,\,\text{is non-Archimedean}  \end{cases}$$ 
        see \cite[\S 1.2.2 and Proposition 2.1.1]{CL11}. That is, $$c_1(\overline{L}_{\text{par},\epsilon,P_t})_v=c_1(\overline{L}_{\text{par},\epsilon,Q_t})_v$$
Hence the metrics $\|\cdot\|_{v,P_t,\epsilon}$ and $\|\cdot\|_{v,Q_t,\epsilon}$ are proportional by applying \cite[Corollary 2.2]{YZ17} and \cite[Corollary A.6.2]{YZ26}.  Then there exist constants $c_{v,\epsilon}>0$ so that $$\|\cdot\|_{v,P_t,\epsilon}=c_{v,\epsilon}\|\cdot\|_{v,Q_t,\epsilon}.$$ It is nothing that $$\|\cdot\|_{v,P_t}h_v^{\epsilon}=c_{v,\epsilon}\|\cdot\|_{v,Q_t}h_v^{\epsilon}$$ where $$h_v([t_0:t_1]):=\begin{cases}\frac{|a_0t_0+a_1t_1|_v}{\sqrt{|t_0|_v^2+|t_1|_v^2}}\, \, \text{ Fubini-study metric },&\mbox{if}\,\,v\,\,\text{is Archimedean} \\ \frac{|a_0t_0+a_1t_1|_v}{\max\{|t_0|_v,|t_1|_v\}}\,\, \text{ model metric},&\mbox{if}\,\,v\,\,\text{is non-Archimedean} \end{cases}$$ and $a_0, a_1$ are scalars. Since $h_v^{\epsilon}>0$, we can deduce that $c_{v,\epsilon}$ must be  constants independent of $\epsilon$ and thus $$\|\cdot\|_{v,P_t}=c'_v\|\cdot\|_{v,Q_t}$$ for each $v\in M_K.$ Recall that $h_{\overline{L}_{\text{par},P_t}}$ and  $h_{\overline{L}_{\text{par},Q_t}}$ are calculated by evaluating $-\log\| s\|_{v,P_t}$ and $-\log\|s\|_{v,Q_t}$ at $t$ for any nonzero rational section $s$ of $\mathcal{O}_{\mathbb{P}^1}(1)$ that does not vanish at $t$.  Hence $h_{\overline{L}_{\text{par},P_t}}$ and  $h_{\overline{L}_{\text{par},Q_t}}$ differ by a constant $c_v$ with $c_v=0$ for all but finitely many $v\in M_K$. Since there is a sequence in which both heights converge to the same value and so the constant $c=\sum_{v\in M_K}c_v$ must be $0$. Therefore $h_{\overline{L}_{\text{par},P_t}}=h_{\overline{L}_{\text{par},Q_t}}$ and it is clear that $$h_{P_t}=h_{Q_t}.$$
		(b)$\Longrightarrow$(a) is clear. (c)$\Longrightarrow$(b) follows from a property of $h_{P_t}$ which detects parameters $t$ so that $P_t$ is $F_t$-preperiodic. The proof is completed.
\end{proof}
	We end this subsection with a non-trivial yet straightforward computation to demonstrate an instance of unlikely intersection phenomenon predicted by Theorem \ref{thm:unlikelypolyskewtwoinitialpoints}.
	\begin{example}\label{ex:exampleofunlikelyinter}  Consider a family of polynomial skew product $F_t$ defined over a number field $K$ parametrized by $t\in\overline{K}$ given by 
		$$F_t(x,y)=(x^2,y^2+txy).$$
	Given two initial points $P_t=(t,0)$ and $Q_t=(t+1,0)$ in $K[t]\times K[t]$. Notice that $P_t$ and $Q_t$ satisfy the hypotheses of Theorem \ref{thm:unlikelypolyskewtwoinitialpoints} as \begin{align*}\text{Prep}(P_t)&=\{t: t=0\,\,\text{or}\,\,t\in \mu_{\infty}\}\\  \text{Prep}(Q_t)&=\{t: t=-1\,\,\text{or}\,\,(t+1)\in \mu_{\infty}\}\end{align*}where $\mu_{\infty}$ is the set of of all roots of unity.  Notice that $-2\in \text{Prep}(Q_t)$ but $-2\not\in \text{Prep}(P_t)$.  Hence, the sets $\text{Prep}(P_t)$ and $\text{Prep}(Q_t)$ are both infinite and distinct. However, we can easily check that the intersection $$\text{Prep}(P_t)\cap\text{Prep}(Q_t)$$ is finite and non-empty.   In fact, we require both $t$ and $t+1$ to be in the set $\{0\}\cup \mu_{\infty}$. By a routine case-by-case calculation, we have the following\\ \textbf{Case (i)} : $t=0$ which yields $t+1=1$ and both are preperiodic under $x^2$.\\ \textbf{Case (ii)} : $t+1=0$ and this implies $t=-1$. It is obvious that both $0$ and $-1$ are preperiodic under $x^2$.\\ \textbf{Case (iii)} : $t$ and $t+1$ are roots of unity. They must lie on the unit circle. It is elementary to find complex numbers $t$ such that $|t|=1$ and $|t+1|=1$. We solve algebraically to obtain  $t=e^{i2\pi/3}, e^{i4\pi/3}.$ Thus, we conclude $$\text{Prep}(P_t)\cap\text{Prep}(Q_t)=\left\{-1,0,e^{\frac{i2\pi}{3}},e^{\frac{i4\pi}{3}}\right\}.$$ \\
\indent 	Furthermore, let $P'_t=(-t,0)$ be another initial point which differs from $P_t$ when $t\neq0$. Thus, we clearly see that $$\text{Prep}(P_t)=\text{Prep}(P'_t)=\mu_{\infty}$$ and their orbits collide $$F^n_t(P_t)=F^n_t(P'_t)$$ for all $n\geq1$. This exhibits an orbit relation of $P_t$ and $P'_t$.
	\end{example}

\begin{rmk}	Motivated by the above example, it would be of great interest to describe explicit dynamical relation between $P_t$ and $Q_t$ in light of \cite[Theorem 1.3]{BD13}. Keeping the same notation as in Example \ref{ex:exampleofunlikelyinter} for $F_t$, now we set $P_t=(t+a,0)$ and $Q_t=(t+b,0)$ where $a,b\in\mathbb{Z}$. It would  be intriguing to obtain an effective bound or a uniform bound for the set $\text{Prep}(P_t)\cap \text{Prep}(Q_t)$ provided $\text{Prep}(P_t)\neq \text{Prep}(Q_t)$. This question should be valid for other families of polynomial skew products $F_t$ and different families of initial points  $P_t$ and $ Q_t$.

\end{rmk}

\section{A case of Conjecture \ref{conj: DM24-conj-1,1}}\label{sec:proper-sub}
In this section, our aim is to prove that, under some degree conditions, if there are infinitely many $t_0\in \overline{K}$ such that $P_{t_0}$ is preperiodic under $F_{t_0}$, then the Zariski closure of forward orbit of $P_t\in \mathbb{A}^2({K[t]})$ under the action of $F_t$ is contained in a proper subvariety of $\P^2_{\overline{K(t)}}$. As a corollary, it will also conditionally answer a case of Conjecture \ref{conj: DM24-conj-1,1} when the dynamical systems are given by families of regular polynomial skew products. 

\begin{thm}[Theorem \ref{thm:mainpolyskewinsubvar}]
   Let $K$ be a number field and let 
\[
F_t(x,y)=(f_t(x),g_t(x,y))
\]
be a one-parameter family of regular polynomial skew products of degree $d\ge2$, 
with $f_t(x)$ and $g_t(x,y)$ monic polynomials in $K[t][x]$ and $K[t][x,y]$, 
respectively. 

Given a point $P_t := (a(t),b(t)) \in K[t]\times K[t]$ such that
\begin{enumerate}
\item 
$
\deg(a(t)) > \deg_t(f_t);
$

\item $\deg(b(t))$ is positive and
\[
\deg(b(t)) \ge 
\left(
\frac{\operatorname{lcm}(\deg(b(t)),\,\deg(a(t)))}{\deg(b(t))} + 1
\right)
\bigl(\deg(a(t)) + \deg_t(g_t)\bigr).
\]
\end{enumerate}
   Suppose that there are infinitely many $t_0\in \overline{K}$ for which $P_{t_0}$ is preperiodic under the action of $F_{t_0}$. Then the Zariski-closure forward orbit $\overline{\text{Orb}_{F_t}(P_t)}$ is contained in a proper subvariety of $\mathbb{P}^2_{\overline{K}(t)}$. 
\end{thm}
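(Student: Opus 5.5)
The plan is to argue by contradiction: assume $\overline{\mathrm{Orb}_{F_t}(P_t)}$ is all of $\mathbb{P}^2_{\overline{K}(t)}$, and compare the height $h_{P_t}$ of \S\ref{sec:unlikelyinter} with the one-dimensional height of the marked point $a(t)$ for the family $f_t$. Note first that if $P_{t_0}$ is $F_{t_0}$-preperiodic, then $a(t_0)$ is $f_{t_0}$-preperiodic. So the infinite set of preperiodic parameters lies in $\mathrm{Prep}(P_t)\cap\mathrm{Prep}(a_t)$. Condition (1) gives $\deg f_t^n(a(t))=d^n\deg a$, so $a(t)$ is active and Assumption~\ref{assump:initialmarkedpoint} holds. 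Then I would run the argument of Theorem~\ref{thm:unlikelypolyskewtwoinitialpoints}, namely equidistribution (Theorem~\ref{thm:YuanZhangequidistribution}) applied to both adelic line bundles $i^*\overline{L}_F$ and the pullback of the $f_t$-invariant bundle, followed by the Calabi argument. This should give, at every place $v$ and in particular the archimedean one, that the escape-rate potentials agree up to normalisation:
\[
G_{F_t}(P_t)\;=\;\lambda\, G_{f_t}(a(t)) + \text{const},
\]
with $\lambda>0$ fixed by total masses, i.e.\ by the growth rates at $t=\infty$.

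Next I would compute both sides near $t=\infty$ using the vertical B\"ottcher coordinate of the skew product. For $|x|$ large, write $G_{F}(x,y)=\max\{G_f(x),\,G^{v}_x(y)\}$ in the escaping region. Here $G^{v}_x(y)=\log|\varphi_x(y)|$, and $\varphi_x(y)=y+O(\cdot)$ is the vertical B\"ottcher map conjugating $g_x$ to $y^d$ along the horizontal orbit. Condition (1) gives $G_{f_t}(a(t))=\deg a\cdot\log|t|+O(1)$. Condition (2) is what forces $b$ to dominate: I would show by induction that the leading term of the second coordinate of $F_t^n(P_t)$ is $b(t)^{d^n}$. The inequality on $\deg b$ must absorb the cross terms of $g_t$, whose size is controlled by $\deg a+\deg_t g_t$. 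Hence $\varphi_{a(t)}(b(t))$ is a convergent Puiseux/Laurent series in $1/t$ with leading term $b(t)$, and $G^{v}(P_t)=\deg b\cdot\log|t|+O(1)$.

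The key step is to turn equality of the potentials into an algebraic relation. Take $\lambda=\deg b/\deg a$, and set $L=\operatorname{lcm}(\deg a,\deg b)$, $p=L/\deg b$, $q=L/\deg a$. The harmonic function $\log|\varphi_{a(t)}(b(t))^{p}|-\log|\Phi_t(a(t))^{q}|$, where $\Phi_t$ is the B\"ottcher coordinate of $f_t$, is bounded near $t=\infty$ and vanishes there after normalisation. It should therefore be constant, which gives $\varphi_{a(t)}(b(t))^{p}=\zeta\,\Phi_t(a(t))^{q}$. Applying $F_t^n$ propagates this relation to every iterate, and iterates only raise both sides to the $d^n$. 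So all points of the orbit lie on the analytic curve $\{\varphi_x(y)^p=\zeta\Phi_t(x)^q\}$ near the line at infinity. The factor $(p+1)$ in condition (2) should be exactly what makes the truncation of these series to finite order determine a polynomial relation: a truncation of bidegree controlled by $p$ and $q$ already vanishes on infinitely many orbit points. An algebraic curve then contains infinitely many orbit points, contradicting Zariski density.

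I expect this last step to be the main obstacle: upgrading the analytic (B\"ottcher) invariant curve to an algebraic one, and checking that the degree bound (2) really makes the error terms in the expansion of $\varphi_{a(t)}(b(t))$ smaller than the gap needed. What I would try first is a Baker--DeMarco-style degree comparison. Expand both sides of $\varphi_{a_n}(b_n)^p=\zeta\Phi(a_n)^q$ in $t$. Use condition (2) to show the polynomial part $b_n^p-\zeta' a_n^{q}$ has degree strictly less than that of the error terms unless it is identically zero along a subsequence. This yields the explicit invariant curve, of the shape $y^p=\zeta x^q+\dots$; compare $V(y^2-x^{11})$ in the example following the theorem.
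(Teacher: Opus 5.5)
Your route is the paper's. You use equidistribution of the preperiodic parameters for both $(F_t,P_t)$ and $(f_t,a(t))$, then proportional escape rates, then the relation $\varphi_{a(t)}(b(t))^{p}=\zeta\,\Phi_t(a(t))^{q}$. Finally a degree count using the factor $p+1$ in condition (2) replaces both sides by polynomial parts (cf. Lemma \ref{lem: from-Bot-Cor-to-Analytic-relation}).

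The gap is in the propagation step. Raising both sides to the $d^n$-th power gives
\[
\varphi_{a_n(t)}\bigl(b_n(t)\bigr)^{p}=\zeta^{d^n}\,\Phi_t\bigl(a_n(t)\bigr)^{q},
\]
so after truncation you only get $F_t^n(P_t)\in V\bigl(P(x,y)-\zeta^{d^n}Q(x)\bigr)$, a curve that depends on $n$. The orbit does not lie on the single curve with constant $\zeta$; that would require $\zeta^{d-1}=1$. If $\zeta$ is not a root of unity, the values $\zeta^{d^n}$ are pairwise distinct. The iterates are then spread over infinitely many distinct level curves of $P/Q$, and nothing follows.

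The paper isolates exactly this point as Lemma \ref{lem: xi-root-of-unity}: $\zeta$ is a root of unity. Hence $\zeta^{d^n}$ is eventually periodic, and there are $k\ge 1$, $i\ge 0$ and a fixed $\xi'$ with $\zeta^{d^{nk+i}}=\xi'$ for all $n$. The whole progression $F_t^{nk+i}(P_t)$ then lies on the single curve $C=V(\xi'P-Q)$.

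This progression is also what your final step needs. A curve containing infinitely many orbit points, or a subsequence of them as in your last paragraph, does not contradict Zariski density of the orbit. An arithmetic progression on $C$, by contrast, puts all but finitely many orbit points in the proper closed set $\bigcup_{j=0}^{k-1}F_t^{j}(C)$.

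To prove within your framework that $\zeta$ is a root of unity, you can argue as follows.
\begin{itemize}
\item Compare leading coefficients in $t$ to get $\zeta=\mu_b^{p}/\mu_a^{q}\in K^*$, where $\mu_a,\mu_b$ are the leading coefficients of $a,b$.
\item Equidistribution is available at every $v\in M_K$. Set $G_{1,v}(t)=G_{f_t,v}(a(t))$ and $G_{2,v}(t)=G_{F_t,v}(P_t)$. The difference $pG_{2,v}-qG_{1,v}$ is harmonic, bounded at $\infty$, and vanishes at the preperiodic parameters, so $pG_{2,v}=qG_{1,v}$ at every place.
\item As $|t|_v\to\infty$ one has $G_{1,v}(t)=\deg a\log|t|_v+\log|\mu_a|_v+o(1)$ and $G_{2,v}(t)=\deg b\log|t|_v+\log|\mu_b|_v+o(1)$. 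Hence $|\zeta|_v=1$ for all $v$, and Kronecker's theorem applies.
\end{itemize}
The archimedean place alone gives only $|\zeta|=1$, which is not enough. The paper instead compares the leading and first negative-degree coefficients of the iterated identity.

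Finally, state the truncation step the right way round: the non-polynomial remainder has strictly negative $t$-degree, which forces $\xi'P(a_n,b_n)=Q(a_n)$ exactly. Here $P$ and $Q$ are the full polynomial parts, including the contributions of the B\"ottcher coefficients $c_{i,t}(x)$, not just $y^p$ and $x^q$.
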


\subsection{Technical lemmas}
Before we start the proof of Theorem \ref{thm:mainpolyskewinsubvar}, we collect here several key technical lemmas.

We first introduce the vertical B\"ottcher coordinate of a regular polynomial skew products. Let's view $K(t)$ as embedded in $\C$ with $t$ having large enough norm and transcendental
 over $K$. Then, by \cite[Proposition 2.6]{Jo99}, there exists a $R \in \R^*$ such that 
    in the region
    $$ D_2 \coloneq\{(x,y) \in \C^2 : G_{F_t}(x,y) - G_{f_t}(x,y) > R\}$$
    one has the vertical B\"ottcher coordinate $\Phi_x$ satisfying
   \begin{enumerate}
       \item $\Phi_x (y) = y + o(1)$ as $|y| \to \infty$;
       \item $\log |\Phi_x| = G_{F_t}(x,y) - G_{f_t}(x,y)$;
       \item 
       $\Phi_{f_t(x)}(g_t(x,y)) = \Phi_x(y)^d$.
   \end{enumerate}

\begin{lem}\label{lem: linear-bound-c-deg}
   Let $f_t(x)$ be a monic polynomial of degree $d>1$ in $\C[t][x]$, and let
\[
g_t(x,y) = y^d +  \sum_{i=0}^{d-1} \mu_{d-i,t}(x)\, y^{i} \in K[t][x,y].
\]
For each $x$, let $\Phi_x$ denote the vertical B\"ottcher coordinate
associated to $(f_t,g_t)$, defined by
\[
\Phi_x(y) = y + \sum_{i=1}^{\infty} c_{i,t}(x)\, y^{-i},
\]
and satisfying the functional equation
\[
\Phi_{f_t(x)}\!\bigl(g_t(x,y)\bigr) = \bigl(\Phi_x(y)\bigr)^d
\]
for all $(x,y)\in D_2$.

Then each coefficient $c_{i,t}(x)$ lies in $\overline{K}[t][x]$. Moreover, for any
$a(t), b(t)\in K[t]$ with $(a(t), b(t))\in D_2$ and $\deg_t(a(t)) > \deg_t(f_t)$, one has the degree bound
\[
\deg_t\!\bigl(c_{i,t}(a(t))\bigr)
   \le k_3 (i+1) + i k_1,
\]
where $k_3 := \deg_t(a(t))$ and $k_1 := \deg_t(g_t)$.

\end{lem}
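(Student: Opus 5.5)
We will derive a recursion for the coefficients $c_{i,t}(x)$ directly from the functional equation, and then bound their $t$-degrees by induction on $i$.

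\textbf{Step 1 (the recursion).} Substitute $\Phi_x(y)=y+\sum_{i\ge1}c_{i,t}(x)y^{-i}$ into
\[
\Phi_{f_t(x)}(g_t(x,y))=\Phi_x(y)^d .
\]
Expanding in powers of $y$, the left side is
\[
g_t(x,y)+\sum_{j\ge1}c_{j,t}(f_t(x))\,g_t(x,y)^{-j},
\]
and $g_t(x,y)^{-j}=y^{-dj}\bigl(1+\sum_{i<d}\mu_{d-i,t}(x)y^{i-d}\bigr)^{-j}$ is a Laurent series in $y^{-1}$ whose coefficients are polynomials in the $\mu$'s. The right side is
\[
y^d+d\,c_{1,t}(x)y^{d-2}+\cdots .
\]
Compare the coefficient of $y^{d-1-i}$ for $i\ge1$. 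The right side contributes $d\,c_{i,t}(x)$, plus polynomial expressions in $c_{1,t},\dots,c_{i-1,t}$. The left side contributes the coefficient $\mu_{i+1,t}(x)$ of $g_t$ (taken to be zero when $i+1>d$), plus terms $c_{j,t}(f_t(x))$ with $dj\le i+1-d$, hence $j<i$. Solving for $c_{i,t}(x)$ expresses it as a polynomial with rational coefficients (denominators powers of $d$) in the $\mu$'s, in $c_{j,t}(x)$ for $j<i$, and in $c_{j,t}(f_t(x))$ for $j<i$. By induction each $c_{i,t}(x)$ therefore lies in $\overline K[t][x]$, with $c_{1,t}=\mu_{1,t}/d$ up to normalisation; the monicity of $g_t$ is what makes the leading coefficient $d$ invertible.

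\textbf{Step 2 (a weighted-homogeneity bookkeeping).} Directly bounding $\deg_t c_{i,t}(a(t))$ through the recursion is problematic, because the terms $c_{j,t}(f_t(x))$ require evaluating at $f_t(a(t))$, whose degree is $d\,k_3$. We instead track a weighted degree. Give $y$ weight $k_3+k_1$, $x$ weight $k_3$, and $t$ weight $1$. Since $\deg_t g_t=k_1$, the coefficient $\mu_{m,t}(x)$ has total $(t,x)$-weight at most $m(k_3+k_1)$ after setting $x=a(t)$: each monomial of $g_t$ has weighted degree at most $d(k_3+k_1)$ when $y$ has weight $k_3+k_1$ and $x$ has weight $k_3$. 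Here we use that $\deg_x g_t\le d$ by regularity, so that $x^ay^b$ with $a+b\le d$ carries weight at most $dk_3+bk_1$, up to $t$-degree $\le k_1$. We then claim, by induction, that $c_{i,t}(x)$ has weighted degree at most $(i+1)(k_3+k_1)$, in the sense that $\deg_t c_{i,t}(a(t))\le(i+1)(k_3+k_1)-k_1=k_3(i+1)+ik_1$, which is the stated bound. The key consistency check is that the functional equation is weighted-homogeneous of degree $d(k_3+k_1)$ when $\Phi$ is given the weight of $y$. For the substituted terms $c_{j,t}(f_t(x))$ we use hypothesis $\deg a>\deg_t f_t$, which gives $\deg_t f_t(a(t))=dk_3$. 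The composite thus picks up weight consistent with $x\mapsto f_t(x)$ raising weight by the factor $d$, which is exactly balanced by $g_t^{-j}$ lowering the $y$-weight by $d j$.

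\textbf{Main obstacle.} The main obstacle is making the weight count precise for the terms $c_{j,t}(f_t(x))\,g_t^{-j}$. One must show that the $t$-degree gained from evaluating at $f_t(a(t))$, which is at most $dk_3(j+1)+jk_1$ by induction applied to the point $(f_t(a(t)),\cdot)$, is compensated by the $d j$ lost powers of $y$ when matching the coefficient of $y^{d-1-i}$. To do this cleanly, we will first try to prove the sharper inductive statement that $c_{i,t}$, as a polynomial in $(t,x)$, has all monomials $t^\alpha x^\beta$ satisfying $\alpha+\beta k_3\le k_3(i+1)+ik_1$ for every $k_3\ge\deg_t f_t$. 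We then verify that each term in the recursion respects this bound, using $dj\le i+1-d$.
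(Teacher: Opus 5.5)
Your proposal is correct and follows essentially the paper's argument: compare coefficients of $y^{d-1-i}$ to get $d\,c_{i,t}=P-Q$, then induct on $i$, bounding the terms $c_{j,t}(f_t(a(t)))$ by the inductive bound at the point $f_t(a(t))$ of degree $dk_3$. Your uniform monomial formulation (valid for every $k_3\ge\deg_t f_t$) is a cleaner way to justify applying the hypothesis at $f_t(a(t))$, which the paper does only implicitly.

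When you carry it out, bound each $\mu_{m,t}$ by weight $mk_3+k_1$, using $\deg_t\mu_{m,t}\le k_1$ and $\deg_x\mu_{m,t}\le m$. The weight $m(k_3+k_1)$ from your Step 2 is too weak: it already gives $2k_3+2k_1$ for $c_{1,t}=\mu_{2,t}/d$ (note $\mu_{2,t}$, not $\mu_{1,t}$). You also need the count $r+j\le i$, which follows from $r\le\sum_s k_s=i+1-d(j+1)$.
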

\begin{proof}
    We will prove this by induction and we verify the base case when $i =1$ first. Notice that the coefficient of $y^{d-2}$ term in $(\Phi_{a(t)}(y))^d$ is given by
    $$ d c_{1,t}(a(t)).$$ While the coefficient of $y^{d-2}$ term in $$ \Phi_{f_t(a(t))}(g_t(a(t),y))$$
    is 
    $$ \mu_{2,t}(a(t))$$
    whose degree of $t$ is bounded by 
    $$ 2k_3 + k_1,$$
    since $\deg_x(\mu_{i,t}) \leq i$ for each $i \in \{0,1,\dots, d\}$.
    This verifies the base case and, in particular, shows that $c_{1,t}(a(t)) \in \overline{K}[t][x]$.

    Now, for any positive integer $n > 1$, we assume the formula holds for any $i < n$. We want to show that the same upper bound also hold for $i = n$. We look at the coefficients of $y^{d-1-n}$ terms on both sides of the equation. 

    The coefficient of the $y^{d-1-n}$ term in $$\Phi_{f_t(a(t))}(g_t(a(t),y)) = \sum_{i=0}^{d} \mu_{d-j,t}(a(t))y^j + \sum^{\infty}_{i = 1} c_{i,t}(f_t(a(t)))\left( y^d+ \sum_{j=0}^{d-1} \mu_{d-j,t}(a(t)) y^j\right)^{-i}$$
    $$ = \sum_{i=0}^{d} \mu_{d-j,t}(a(t))y^j + \sum^{\infty}_{i = 1} c_{i,t}(f_t(a(t)))y^{-id}\left( 1 + \sum_{j=0}^{d-1} \mu_{d-j,t}(a(t)) y^{j-d}\right)^{-i}$$
    $$ = \sum_{i=0}^{d} \mu_{d-j,t}(a(t))y^j + \sum^{\infty}_{i = 1} c_{i,t}(f_t(a(t)))y^{-id}\left(\sum^{\infty}_{s = 0}\left( -\sum_{j=0}^{d-1} \mu_{d-j,t}(a(t)) y^{j-d}\right)^{s}\right)^i$$
    can be written as
    $$ P(c_{1,t}, \dots, c_{n-1,t}, \mu_{1,t}, \dots, \mu_{d,t}) ,$$
    here $P$ is a polynomial in $(c_{1,t}, \dots, c_{n-1,t}, \mu_{1,t}, \dots, \mu_{d,t})$ which is a summation of terms
    $$ c_{j,t}(f_t(a(t)))C_{k_1, \dots ,k_r}\prod^r_{i = 1} \mu_{k_i,t}(a(t)),  $$
    where $j$ is a positive integer such that $dj \leq n+1 - d$ and $k_i$'s and $r$ are positive integers such that 
    $$
    \sum^r_{i = 1}k_i = n+1-d(j+1)
    ,$$ and $C_{k_1, \dotsm k_r}$ is a constant. Note that 
    $$ \deg_t\left(c_{j,t}(f_t(a(t)))C_{k_1, \dots ,k_r}\prod^r_{i = 1} \mu_{k_i,t}(a(t))\right) $$
    $$\leq \left(d(j + 1) + \sum^r_{i = 1} k_i\right)k_3 + (r + j)k_1  \leq (n+1)k_3 + nk_1 .$$
    Therefore, $\deg_t(P) \leq (n+1)k_3 + n k_1.$

    Now, we look at the coefficient of $y^{d-n-1}$ term  in $(\Phi_{a(t)}(y))^d$, which is given by 
    $$ dc_{n,t}(a(t)) + Q(c_{1,t}, \dots, c_{n-1,t}), $$
    where $Q$ is a polynomial in $c_{1,t}, \dots, c_{n-1,t}$. In particular, $Q$ is a summation of terms 
    $$ C_{l_1, \dots, l_q} \prod^q_{i =1} c_{l_i,t}(a(t)),$$
    where $C_{l_1, \dots, l_q}$ is a constant and $q$ is a positive integer not greater than $d$ and $l_1, \dots, l_q$ are also positive integers such that 
    $$ d - q -\sum^q_{i = 1} l_i =d -( n+1).$$
    Note that 
    $$ \deg_t\left(C_{l_1, \dots, l_q}\prod^q_{i =1} c_{l_i,t}(a(t))\right) \leq k_3 (\sum^{q}_{i = 1} (l_i + 1)) + k_1 \sum^q_{i = 1} l_i \leq k_3(n+1) + nk_1.$$

    Putting these all together, we have 
    $$ dc_{n,t}(a(t)) = P-Q$$
    which implies $c_{n,t}(x)$ is a polynomial in $\overline{K}[t][x]$ and also 
    $$\deg_t(c_{n,t}(a(t))) \leq k_3(n+1) + n k_1.$$
    
\end{proof}

\begin{lem}\label{lem: from-Bot-Cor-to-Analytic-relation}
   We retain the notation from Lemma~\ref{lem: linear-bound-c-deg}. Denote
   $$ m_1 \coloneq \lcm(\deg(b(t)), k_3)/\deg(b(t)).$$
   Suppose 

\[
\deg(b(t)) \geq \left(m_1 + 1\right)(k_3 + k_1).
\]
Then one has the expansion
\[
\left(
  b(t) + \sum_{i=1}^{\infty} c_{i,t}(a(t))\, b(t)^{-i}
\right)^{m_1}
= P_t\!\bigl(a(t), b(t)\bigr) + o(1),
\]
where $P_t(x,y) \in \overline{K}[t][x,y]$ and the notation $o(1)$ denotes terms whose degrees in $t$ are strictly negative.

\end{lem}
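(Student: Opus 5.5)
The approach is to expand the $m_1$-th power of the B\"ottcher series by the multinomial theorem, then sort terms by their $t$-degree. Set $B \coloneq b(t)$, $k_2 \coloneq \deg(b(t))$ and $S \coloneq \sum_{i\ge1} c_{i,t}(a(t))B^{-i}$. Then
\[
\bigl(B+S\bigr)^{m_1} = \sum_{j=0}^{m_1}\binom{m_1}{j} B^{m_1-j} S^{j}.
\]
Each $S^j$ with $j\ge1$ expands as a sum of products $\prod_{r=1}^{j} c_{i_r,t}(a(t))\,B^{-(i_1+\cdots+i_j)}$. By Lemma \ref{lem: linear-bound-c-deg}, such a term contributes to $B^{m_1-j}S^j$ a quantity of $t$-degree at most
\[
(m_1-j)k_2 + \sum_{r}\bigl(k_3(i_r+1)+i_rk_1\bigr) - k_2\sum_r i_r
= m_1k_2 - jk_2 + jk_3 + I\,(k_3+k_1-k_2),
\]
where $I=\sum_r i_r\ge j$.

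The hypothesis $k_2 \ge (m_1+1)(k_3+k_1)$ gives $k_3+k_1-k_2<0$, so the degree decreases linearly in $I$. Only finitely many tuples $(j;i_1,\dots,i_j)$ can therefore have nonnegative $t$-degree. A crude count: nonnegativity forces $I(k_2-k_3-k_1) \le m_1k_2$, hence $I \le m_1k_2/(k_2-k_3-k_1)$, which is bounded. All remaining terms have strictly negative degree, and they should form the $o(1)$ part. To make this rigorous, I will verify that the tail sum over large $I$ still has negative $t$-degree in the Laurent-series sense over $\overline{K}((t^{-1}))$. This follows because the degrees tend to $-\infty$ linearly in $I$, so the series converges $t^{-1}$-adically.

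The main obstacle is showing that the finitely many surviving terms with nonnegative degree assemble into a polynomial $P_t(a(t),b(t))$ with $P_t(x,y)\in\overline K[t][x,y]$, rather than a Laurent expression involving negative powers of $B$. My plan is to truncate. For each surviving term, $c_{i,t}(x)\in\overline K[t][x]$ by Lemma \ref{lem: linear-bound-c-deg}, so the only problematic factor is $B^{m_1-j-I}$, which has negative exponent whenever $j+I>m_1$. Such terms I will discard into $o(1)$ if their total degree is negative, and handle separately otherwise.

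Here the quantitative hypothesis must be used sharply, and it is where $m_1=\lcm(k_2,k_3)/k_2$ enters. The aim is to show that a negative power of $B$ pushes the degree below $0$. Concretely, when $m_1-j-I<0$ we have $I\ge m_1-j+1$. Combined with the bound above, the term's degree is at most $m_1k_2-jk_2+jk_3+(m_1-j+1)(k_3+k_1-k_2)$, and the inequality $k_2\ge(m_1+1)(k_3+k_1)$ should force this to be negative. I expect the remaining case analysis to show that every term with a genuinely negative exponent of $B$ lands in $o(1)$. The other terms are then monomials $c_{i_1,t}(a)\cdots c_{i_j,t}(a)\,b^{m_1-j-I}$ with nonnegative exponent, and I define $P_t(x,y)$ as their sum with $a(t),b(t)$ replaced by $x,y$.

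Finally, I will check two points. First, the infinitely many discarded terms are consistently of negative degree, so the identity holds in $\overline{K}((t^{-1}))$, where the embedding with $|t|$ large makes the analytic series agree with the formal one. Second, convergence in $D_2$ is compatible with this formal degree count. If the inequality turns out to be insufficient in a boundary case, such as $j=0$ (trivially fine) or $I$ exactly at the threshold, I would try to absorb that case using $m_1k_2\equiv 0 \pmod{k_3}$, the lcm condition, which lets the power $b^{m_1}$ be compared with a power of $a$.
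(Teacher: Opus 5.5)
Your route is the paper's: expand the $m_1$-th power, bound each monomial by Lemma~\ref{lem: linear-bound-c-deg}, let $P_t$ be the part with nonnegative exponent of $b$, and show the rest has negative $t$-degree. The gap is that the decisive inequality is left as something that ``should'' hold, with only a vague fallback. It is a one-line check.

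\emph{The check for $k_1\ge 1$.} Your bound decreases in $I$, so its maximum over $I\ge m_1-j+1$ is
\[
-k_2+(m_1+1)k_3+(m_1-j+1)k_1 .
\]
The hypothesis $k_2\ge (m_1+1)(k_3+k_1)$ turns this into $\le -jk_1$. Hence if $k_1\ge 1$, every term with a negative power of $b$ is $o(1)$. There are then no negative-exponent survivors needing separate handling, so your ``main obstacle'' disappears. The lcm definition of $m_1$ is not used in this estimate; in the paper it matters only later, through $m_1l_2=(m_1+m_2)l_1$. Your remark on $t^{-1}$-adic convergence of the infinite tail is a point the paper passes over.

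\emph{The boundary case $k_1=0$.} Here the bound is only $\le 0$, and this is sharp. Equality $k_2=(m_1+1)k_3$ forces $k_3\mid k_2$, hence $m_1=1$ and $k_2=2k_3$. The single term $c_{1,t}(a(t))\,b(t)^{-1}$ can then have degree exactly $0$. For example, take $f_t(x)=x^2$ and $g_t(x,y)=y^2+x^2$, so $c_1(x)=x^2/2$. With $a(t)=t$ and $b(t)=t^2$ one gets $\Phi_{a(t)}(b(t))=t^2+\tfrac12+O(t^{-2})$. The nonnegative-exponent part $P_t(x,y)=y$ misses the constant $\tfrac12$.

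So your expectation that every negative-exponent term lands in $o(1)$ is false in this case. The paper's strict inequality $(m_1+1)(k_3+k_1)>(m_1+1)k_3+(m_1+1-r)k_1$ fails here too, since it needs $k_1>0$. The congruence $m_1k_2\equiv 0 \pmod{k_3}$ does not repair it. Either of the following does:
\begin{itemize}
\item Note that a degree-$0$ term equals its leading coefficient plus $o(1)$, and absorb that constant into $P_t$. This is allowed by the statement, but it makes $P_t$ depend on the leading coefficients of $a$ and $b$, which matters if one later wants a single $P_t$ along an orbit.
\item Assume $k_1\ge 1$, or strict inequality in the hypothesis.
\end{itemize}
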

\begin{proof}
    Since $c_{j,t}(a(t)) \in \overline{K}[t]$ for all $j \in \N^+$ by Lemma \ref{lem: linear-bound-c-deg}, we have that any term of the form
    $$ \left(\prod^r_{j = 1} c_{k_j,t}(a(t)) \right)b(t)^{m_1 - r - \sum^r_{j = 1} k_j} $$
    is a polynomial in $a(t)$ and $b(t)$
    when $m_1 - r - \sum^r_{j = 1} k_j \geq 0$. From our assumption and Lemma \ref{lem: linear-bound-c-deg}, we have that 
    \begin{align}
        \deg(b(t)^j) \geq j(m_1 + 1)(k_3 + k_1) \geq 2j(k_3 + k_1) \nonumber \\> (j+1)k_3 + jk_1 \geq \deg_t(c_{j,t}(a(t))) 
    \end{align}
    for all $j \in \N^+$.

    Therefore, we only need to verify that 
    $$\deg\left(\left(\prod^r_{j = 1} c_{k_j,t}(a(t))\right)b(t)^{m_1 - r - \sum^r_{j = 1} k_j} \right) < 0$$
    for all choices of $r \in \{1,2, \dots, m_1-1\}$ and $\{k_1, \dots, k_r\}$ such that $$m_1 -r-\sum^r_{j= 1}k_j  < 0.$$
    
    Notice that, denoting $L = \sum^r_{j = 1}k_j$, we have
    $$\deg\left(\prod^r_{i = 1} c_{k_j,t}(a(t))\right) \leq (r + L)k_3 + Lk_1$$
    and 
    \begin{align}
        (L -m_1 +r)\deg(b(t)) \geq (L -m_1 +r) (m_1 + 1)(k_3 + k_1) \\
         > (L+r)k_3 + L k_1 \geq  \deg\left(\prod^r_{i = 1} c_{k_j,t}(a(t))\right)\nonumber,
    \end{align}
    since $(m_1 + 1)(k_3 + k_1) > (m_1 + 1) k_3 + (m_1 +1-r)k_1 $ and the coefficient of the $L$ term in 
    $$ (L -m_1 +r) (m_1 + 1)(k_3 + k_1)$$
    is $(m_1 + 1)(k_3 + k_1)$ which is strictly larger than the coefficient of the $L$ term in $$ (L+r)k_3 + L k_1,$$ which is $k_3 + k_1$.
     
    Therefore, we verified that 
    $$\deg\left(\left(\prod^r_{j = 1} c_{k_j,t}(a(t))\right)b(t)^{m_1 - r - \sum^r_{j = 1} k_j} \right) < 0.$$
    Thus, let $P_t(x,y) \in \overline{K}[t][x,y]$ be the polynomial part of 
    $$ \left(y + \sum^\infty_{i = 1} c_{i,t}(x)y^{-i}\right)^{m_1},$$
    we have 
    $$ \left(b(t) + \sum^\infty_{i = 1} c_{i, t}(a(t))b(t)^{-i}\right)^{m_1} = P_t(a(t),b(t)) + o(1).$$
\end{proof}
\begin{lem}\label{lem: xi-root-of-unity}
    We retain the notation and assumptions from Lemma \ref{lem: from-Bot-Cor-to-Analytic-relation} . Let $$(a_n(t), b_n(t)) \coloneq F_t^n(a(t), b(t)).$$ Suppose there exists a $\xi \in \C^*$ such that
    \begin{equation} \label{eq: anaytic-root-of-unity-1}
        \xi^{d^n}\left(a_n(t) + \sum^\infty_{i = 1} c_{i} a_n(t)^{-i}\right)^{m_2} = \left(b_n(t) + \sum^{\infty}_{i = 1} c_{i,t}(a_n(t))b_n(t)^{-i}\right)^{m_1}
    \end{equation}  holds for infinitely many $n \in \N$, where $c_i \in \overline{K}$ for each $i \in \N^+$. Then $\xi$ is a root of unity.
\end{lem}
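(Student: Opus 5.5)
The idea is to compare the leading coefficients in $t$ on both sides of \eqref{eq: anaytic-root-of-unity-1}, using Lemma~\ref{lem: from-Bot-Cor-to-Analytic-relation}, and to show that $\xi^{d^n}$ is forced into a fixed finite set of values.

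\textbf{Step 1: rewrite both sides.} Let $\phi(x)=x+\sum_i c_i x^{-i}$ be the B\"ottcher coordinate of $f_t$ at infinity. Write $\alpha_n$ for the leading coefficient of $a_n(t)$ and $\beta_n$ for the leading coefficient of $b_n(t)$. By the functional equations we have $\phi(a_n)=\phi(a)^{d^n}$ and $\Phi_{a_n}(b_n)=\Phi_a(b)^{d^n}$. The relation therefore becomes
\[
\xi^{d^n}\phi(a(t))^{m_2 d^n}=\Phi_{a(t)}(b(t))^{m_1 d^n},
\]
an identity of Laurent series in $t^{-1}$. Here $\phi(a(t))$ and $\Phi_{a(t)}(b(t))$ are Laurent series whose leading terms are the leading terms of $a(t)$ and $b(t)$. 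The second claim uses $\deg b\ge 2(k_3+k_1)$ together with Lemma~\ref{lem: linear-bound-c-deg}, exactly as in the proof of Lemma~\ref{lem: from-Bot-Cor-to-Analytic-relation}. The first claim uses $\deg_t(a)>\deg_t(f_t)$, applying the same estimate to the horizontal B\"ottcher coefficients.

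\textbf{Step 2: compare leading coefficients.} Let $\alpha,\beta$ be the leading coefficients of $a(t),b(t)$, both in $K^*$. Comparing leading coefficients gives
\[
\xi^{d^n}\alpha^{m_2d^n}=\beta^{m_1d^n},
\qquad\text{that is,}\qquad
\bigl(\xi\,\alpha^{m_2}\beta^{-m_1}\bigr)^{d^n}=1 .
\]
This holds for infinitely many $n$. In particular it holds for some $n$, so $\eta:=\xi\alpha^{m_2}\beta^{-m_1}$ is a root of unity and $\xi=\eta\,\beta^{m_1}\alpha^{-m_2}$. This does not yet make $\xi$ a root of unity, so a second ingredient is needed.

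\textbf{Step 3: use the full series identity.} The plan here is to use more than the leading term. Taking $d^n$-th roots of the Step 1 identity, for a suitable branch we get
\[
\zeta\,\phi(a(t))^{m_2}=\Phi_{a(t)}(b(t))^{m_1}
\]
with $\zeta^{d^n}=\xi^{d^n}$. By Lemma~\ref{lem: from-Bot-Cor-to-Analytic-relation} the right-hand side equals $P_t(a,b)+o(1)$ with polynomial part in $\overline{K}[t]$, and the left-hand side has the same form. The constraint must come from specialization. For the infinitely many parameters $t_0$ where $P_{t_0}$ is preperiodic, the normalized Green functions vanish. Along the formal identity, this forces $|\xi|_v=1$ at every place $v$, using the identity $\log|\Phi|=G_F-G_f$ at each place. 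By Kronecker's theorem, $\xi$ is algebraic, since it is determined by coefficients in $\overline K$ via Step 2, and it has all absolute values equal to $1$, so it is a root of unity.

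\textbf{Main obstacle.} The hardest step is making the place-by-place absolute value argument in Step 3 rigorous, because the expansions are only formal in $t^{-1}$. If that fails, the fallback is to push the coefficient comparison in Step 2 to the next order. If $\eta$ depends on $n$ only through $d^n$, then comparing the subleading coefficients for two distinct values of $n$ should force $\beta^{m_1}\alpha^{-m_2}$ itself to be a root of unity. The reason is that $m_1\deg b=m_2\deg a=\operatorname{lcm}$, so both sides have matching $t$-degree and the normalized leading constants must agree up to torsion.
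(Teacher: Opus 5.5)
Your Steps 1--2 are sound and agree with the first step of the paper's proof: comparing leading coefficients gives $(\xi\alpha^{m_2}\beta^{-m_1})^{d^n}=1$. Step 1 in fact captures the whole content of the hypothesis. The ratio $\Phi_{a}(b)^{m_1}/\phi(a)^{m_2}$ has a constant $d^n$-th power, so it equals the constant $\lambda=\beta^{m_1}\alpha^{-m_2}$, and $\xi/\lambda$ is a root of unity. Everything therefore hinges on showing that $\lambda$ is torsion, and Step 3 does not do this.

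The lemma's hypothesis is a formal identity in $\overline{K}((t^{-1}))$. It contains no preperiodic parameters and no Green functions, so you cannot invoke them. Even in the setting of Theorem~\ref{thm:mainpolyskewinsubvar}, the vanishing of Green functions at the small parameters $t_0$ says nothing about the expansion at $t=\infty$. What you would actually need is $m_1\log|\Phi_{a(t)}(b(t))|_v=m_2\log|\phi(a(t))|_v$ for $|t|_v\gg0$ at every place $v$. That is an all-places version of Lemma~\ref{lem:equalGreen}, which the paper proves only at one archimedean place, together with $v$-adic convergence of the B\"ottcher coordinates. None of this is part of the present lemma.

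Moreover, the formal identity cannot force $\xi$ to be torsion unless the tails of the B\"ottcher coordinates are used. Take $F_t=(x^d,y^d)$, $a=t$, $b=2t^2$. Then $c_i=c_{i,t}=0$ and $m_1=1$, and both degree hypotheses hold. With $m_2=2$ (the value forced by $m_2\deg a=m_1\deg b$), the identity holds for every $n$ with $\xi=2$.

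The missing idea, which the paper uses and your fallback only gestures at, is to compare the first \emph{negative}-degree coefficients. On the left, the negative part of $\phi(a_n)^{m_2}$ is $\sum_{u\ge1}e_{m_2+u}a_n^{-u}$, so its top term is $\xi^{d^n}D_{F,1}\alpha^{-u_1d^n}t^{-u_1\deg(a)d^n}$. On the right, the estimates of Lemma~\ref{lem: from-Bot-Cor-to-Analytic-relation} isolate a top negative term of the form $D_{F,2}\alpha^{sd^n}\beta^{-u_2d^n}t^{(\cdots)}$. Matching degrees gives $(s+u_1)\deg a=u_2\deg b$. Matching coefficients gives a second relation, pairing $\xi^{d^n}$ with a different monomial in $\alpha^{d^n},\beta^{d^n}$. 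Eliminating $\xi^{d^n}$ against the leading-coefficient relation yields $\nu^{(s+u_1+m_2)d^n}=(D_{F,1}/D_{F,2})^{s+u_1}$ with $\nu=\alpha^{s+u_1}\beta^{-u_2}$, for infinitely many $n$. Hence $\nu$ is torsion, and then so is $\xi$.

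Your appeal to ``subleading coefficients for two values of $n$'' and ``leading constants agree up to torsion'' only reproduces Step 2 and gives no second independent relation. This route also requires $D_{F,1},D_{F,2}\neq0$, which is exactly what fails in the example above. A complete argument must therefore justify that these tail coefficients are nonzero, or add a hypothesis that guarantees it.
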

\begin{proof}
    We first compare the leading coefficients in the both sides of the equations. We denote 
    $$ a(t) = \mu_a t^{a} + o(t^a)$$
    $$b(t) = \mu_bt^b + o(t^b).$$
    
    Note that $a > \deg_t(f_t)$ by our assumption. The left hand side of Equation (\ref{eq: anaytic-root-of-unity-1}) is equal to 
    $$ \xi^{d^n} \mu^{m_2d^n}_a t^{m_2ad^n} + o(t^{m_2ad^n}),$$
    and the right hand side is 
    $$ \mu_b^{m_1d^n}t^{m_1bd^n} + o(t^{m_1bd^n}).$$
    These imply that 
    $$
      am_2 = m_1b,  
    $$ 
\begin{equation}\label{eq: a-b-relation-1}
        (\mu_b^{m_1}/\mu_a^{m_2})^{d^n} = \xi^{d^n}.
    \end{equation}

    Now, we look at the coefficient of the first negative $t$-degree term. When $n$ is large enough, the first negative $t$-degree term of the left hand side of Equation (\ref{eq: anaytic-root-of-unity-1}) is a summation of terms of the form
$$ \xi^{d^n}a_n(t)^{-u_1}\prod^r_{ i = 1} c_{s_i},$$
where $u_1 \in \N^+$, $s_i, r$ are some positive integers bounded by $m_2$. Written compactly, the leading term is given by
    
    $$ \xi^{d^n} D_{F,1} \mu_a^{-u_1d^n} t^{- au_1d^n} ,$$
    where $D_{F,1}$ is a non-zero constant only depending on $F$, $a(t)$ and $m_2$. 
    
    While, similarly, the argument in Lemma \ref{lem: from-Bot-Cor-to-Analytic-relation} implies that
    the terms of the form $$\left(\prod^r_{j = 1} c_{k_j,t}(a_n(t)) \right)b_n(t)^{m_1 - r - \sum^r_{j = 1} k_j},$$
    for some $r \in \{1,2, \dots, m_1-1\}$ and $k_j \in \N^+$ such that $m_1 - r - \sum^r_{j = 1}k_j \geq 0$, are polynomials in $a(t)$ and $b(t)$ and hence are polynomials in $t$. Thus,
    the first negative $t$-degree term on the right hand side of Equation (\ref{eq: anaytic-root-of-unity-1}) is a summation of terms of the form 
    $$ b_n(t)^{-u_2}\prod^{r'}_{i = 1}c_{s'_i,t}(a_n(t))$$
    where $u_2 \in \N^+$, $r' \in \{1,2, \dots, m_1 -1\}, s_i' \in \N^+$ such that $$m_1 -r' -\sum^{r'}_{i = 1}s'_i = -u_2.$$ Then, written compactly, we again have that the leading term is given by
    $$ D_{F,2} \mu_a^{s d^n} \mu_b^{-u_2d^n} t^{asd^n +  C_2 - u_2bd^n},$$
    where $s$ is a positive integer, $D_{F,2}$ and $C_2$ are constant only depending on $F_t$, $a(t)$, $b(t)$ and $m_1$. Moreover, $D_{F,2}$ is also non-zero. Therefore, for every large enough $n \in \N^+$, the Equation (\ref{eq: anaytic-root-of-unity-1}) implies that 
    $$ a(s + u_1) = u_2b$$
    and 
    $$ \mu_a^{(s + u_1)d^n}/\mu^{u_2d^n}_b = \xi^{d^n}D_{F,1}/D_{F,2}.$$
    Plug in the relation of $b$ and $a$ to Equation \ref{eq: a-b-relation-1}, we also have 
    $$ \left(\mu_b^{m_2u_2/(s+ u_1)}/\mu_a^{m_2}\right)^{d^n} = \xi^{d^n}.$$
    Let $\nu = \mu_a^{s + u_1}/\mu_b^{u_2}$. These imply that 
    $$ \nu^{- m_2d^n} = \xi^{d^n(s + u_1)},$$
    $$ \nu^{d^n} = \xi^{d^n}D_{F_1}/D_{F_2}.$$
    Hence
    $$ \nu^{(s + u_1 + m_2)d^n} = (D_{F,1}/D_{F,2})^{s + u_1} ,$$
    for infinitely many $n \in \N$
    which implies that 
   $v$ is a root of unity
    and so $\xi$ is a root of unity.
\end{proof}

\subsection{Proof of Theorem \ref{thm:mainpolyskewinsubvar}}

We will also assume that $P_t$ satisfies assumption $\ref{assump:initialmarkedpoint}$ throughout the discussion as otherwise the theorem is trivial. If the first coordinate of $P_t$ is passive (not active i.e. not satisfying assumption \ref{assump:initialmarkedpoint}), then either $(f_t,a(t))$ is isotrivial or $a(t)$ is preperiodic under $f_t$. Since our assumption implies that there exists a $t_0 \in \overline{K}$ such that $a(t_0) \in \Prep(f_{t_0})$, we have $a(t) \in \Prep(f_t)$ in any case. Hence, it is trivial that $\overline{\Orb_{F_t}(P_t)}$ is proper Zariski closed.

 Recall that the $v$-adic Green function $G_{1,v}(t)$ (resp. $G_{2,v}(t)$) associated to $a(t)$ (resp. $P_t$) of degree $d\geq2$ is given by
\begin{equation}
	G_{1,v}(t)=\lim_{n\rightarrow\infty}\frac{1}{d^n}\log^+|f^n_t(a(t))|_v\label{eq:Greenoff}
\end{equation}
and 
\begin{equation}
	G_{2,v}(t)=\lim_{n\rightarrow\infty}\frac{1}{d^n}\log^+\|F_t^n(P_t)\|_v,\label{eq:Greenofpolyskew}
\end{equation}
respectively. We denote by $\mu_{f_t,v}=dd^c_t G_{1,v}(t)$ and $\mu_{F_t,v}=dd^c_t G_{2,v}(t)$ the bifurcation measures associated to $(f_t,a(t))$ and $(F_t,P_t)$, respectively. As usual, we write $$\hat{h}_{f_t,a}(t):=\hat{h}_{f_t}(a(t))=\frac{1}{[K(t):K]}\sum_{t'\in\text{Gal}(\overline{K}/K)\cdot t}\sum_{v\in M_K}N_vG_{1,v}(t').$$ 

\begin{proof}[Proof of Theorem \ref{thm:mainpolyskewinsubvar}] 
Suppose that there exists an infinite distinct sequence $\{t_n\}_{n\geq1}\subset \overline{K}$ such that $P_{t_n}$ is preperiodic under the action of $F_{t_n}(x,y)$. As explained in subsection \ref{sec:unlikelyinter}, we have that the sequence $\{t_n\}$ is equidistributed with respect to $\mu_{F_t,v}$ for all $v\in M_K$. From our assumption, we also have that such sequence of parameters $\{t_n\}$ satisfies that $a(t_n)$ is preperiodic under the action of $f_t(x)$. Applying the arithmetic equidistribution of small point \cite[Proposition 5.1]{BD13} to deduce that the sequence $\{t_n\}$ is equidistributed with respect to $\mu_{f_t,v}$ at all places $v$ of $K$. Thus it immediately yields the equality of measures $\mu_{F_t,v}=\mu_{f_t,v}$ at all $v\in M_K.$\\

Again, we view $K(t)$ as embedded in $\C$ with $t$ having large enough norm and trancedental over $K$. Then, by \cite[Proposition 2.6]{Jo99}, there exists a $R \in \R^*$ such that 
    in the region
    $$ D_2 \coloneq\{(x,y) \in \C^2 : G_{F_t}(x,y) - G_{f_t}(x,y) > R\}$$
    one has the vertical B\"ottcher coordinate $\Phi_x$ satisfying
   \begin{enumerate}
       \item $\Phi_x (y) = y + o(1)$ as $|y| \to \infty$;
       \item $\log |\Phi_x| = G_{F_t}(x,y) - G_{f_t}(x,y)$;
       \item 
       $\Phi_{f_t(x)}(g_t(x,y)) = \Phi_x(y)^d$.
   \end{enumerate}

   Since we have $\deg(b(t)) > \deg(a(t)) + \deg_t(g_t)$, denoting 
  
   $$(a_m(t),  b_m(t)) \coloneq F^m_t(a(t), b(t)),$$
   $$ l_1 \coloneq \deg(a(t)),$$
   $$ l_2 \coloneq \deg(b(t)),$$
   we have that
   $$\deg(a_m(t)) = l_1d^m$$
   $$ \deg(b_m(t)) = l_2d^m$$
   for any $m \in \N$. Then,
   $$ G_{f_t}(a_m(t)) = l_1d^m\log |t| + o(d^m\log|t|),$$
   $$ $$
   $$ G_{F_t}(a_m(t), b_m(t)) = l_2d^m\log|t|+ o(d^m\log|t|),$$
   as $m \to \infty$.
   Therefore, there exists a $N \in \N$ such that for any $m > N$, we have 
   $$(G_{F_t} - G_{f_t})(a_m(t), b_m(t)) > R$$
   and so
   $$ (a_m(t), b_m(t)) \in D_2.$$

   Let $D_1$ denote the region where the B\"ottcher coordinate of $f_t(x)$ is defined. By further enlarging $m$ if necessary we also have that 
   $$a_m(t) \in D_1.$$

   We abuse notation to let $(a(t), b(t))$ denote $(a_m(t), b_m(t))$ and recall that 
   $$ G_1(t) = G_{f_t}(a(t)) ,$$
   $$ G_2(t) =  G_{F_t}(a(t), b(t)),$$
   where we omit the place $v$ in the index and understand this as working over a fixed Archimedean place.
   Denote 
   $$ m_1 \coloneq \lcm(\deg(b(t)), \deg(a(t)))/\deg(b(t)),$$
   $$ m_2 \coloneq \lcm(\deg(b(t)), \deg(a(t)))/\deg(a(t)) - m_1.$$
   In particular, they satisfy that 
   $$ m_1 \deg(b(t)) = (m_1 + m_2) \deg(a(t))$$
   and 
   $$ m_1l_2 = (m_1 + m_2) l_1.$$

\begin{lem} \label{lem:equalGreen} $(m_2+ m_1)G_1=m_1G_{2}.$ 
\end{lem}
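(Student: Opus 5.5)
The plan is to show that $(m_1+m_2)G_1 - m_1G_2$ is a harmonic function on the whole parameter line, and that this harmonic function is $o(\log|t|)$ at infinity. Liouville then makes it a constant, and a separate argument pins the constant down to zero.

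\textbf{Step 1: harmonicity.} We have already established that $\mu_{F_t,v}=\mu_{f_t,v}$ at every place. This says $dd^c_t G_1 = dd^c_t G_2$ as measures on $\C$, at the fixed archimedean place, so $G_2-G_1$ is harmonic on $\C$. We can pass to the iterate $(a_m,b_m)$ without affecting this: $G_i$ for the iterate equals $d^m$ times the original $G_i$, so harmonicity is preserved. The difficulty is that harmonicity of $G_2-G_1$ does not by itself give the linear relation with the specific coefficients $m_1$ and $m_1+m_2$. The coefficients come from the growth rates at infinity.

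\textbf{Step 2: growth at infinity.} Both functions have the same form of asymptotics,
\[
G_1(t)=l_1\log|t|+O(1), \qquad G_2(t)=l_2\log|t|+O(1) \quad \text{as } |t|\to\infty.
\]
These follow from the degree computations $\deg a_m = l_1d^m$ and $\deg b_m = l_2 d^m$, using the standard uniform estimate $G=\log^+|\cdot| + O(1)$ for monic families with $t$-degree controlled by hypothesis (1).

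Since $dd^cG_1=dd^cG_2$, the total masses agree. For a function of the form $l\log|t|+O(1)$ that is subharmonic, continuous and harmonic near infinity, the total mass of $dd^c$ equals $l$. Hence $l_1=l_2$ after normalization. Strictly, the paper's convention that the iterate is replaced means the comparison should be made between $(m_1+m_2)G_1$ and $m_1G_2$, whose growths both equal $m_1l_2\log|t|$ because $m_1l_2=(m_1+m_2)l_1$.

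\textbf{Step 3: the difference is constant.} Set $h=(m_1+m_2)G_1-m_1G_2$. Then $h$ is subharmonic-minus-subharmonic with $dd^c h = \bigl((m_1+m_2)-m_1\bigr)\mu$, up to the rescaling issue in Step 2. The clean route is to regard $G_1$ and $G_2$ as having proportional Laplacians: equidistribution gives equality of the probability measures, that is,
\[
\frac{dd^cG_1}{l_1}=\frac{dd^cG_2}{l_2},
\]
since Theorem~\ref{thm:YuanZhangequidistribution} normalizes by degree. So $dd^c(l_2G_1-l_1G_2)=0$, and $h=\tfrac{m_1}{l_1}(l_2G_1-l_1G_2)$ is harmonic on $\C$ and $O(1)$ at infinity. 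By Liouville, $h$ is a constant $C$.

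\textbf{Step 4: the constant vanishes (the main obstacle).} We evaluate $h$ at a parameter $t_n$ where $P_{t_n}$ is preperiodic. There both $G_1(t_n)=0$ and $G_2(t_n)=0$, since both orbits are bounded. Hence $C=0$, because such $t_n$ exist by hypothesis.

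The main obstacle is being careful about normalization: the equidistribution statement yields equality of normalized measures rather than of the raw $dd^cG$. I would explicitly track the masses $l_1$ and $l_2$, which after the iterate replacement become $l_1d^m$ and $l_2d^m$. I would also check the uniform $O(1)$ estimate near $t=\infty$, where hypothesis (1) ensures $a(t)$ dominates $f_t$'s coefficients.
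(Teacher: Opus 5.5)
Your Steps 1--2 contain false claims that you should delete rather than patch later. Equidistribution does not give $dd^cG_1=dd^cG_2$, and $G_2-G_1$ is not harmonic. The total masses of $dd^cG_1$ and $dd^cG_2$ are $l_1$ and $l_2$ (up to the common factor $d^m$ from passing to an iterate). Hypothesis (2) gives $l_2\ge (m_1+1)(l_1+k_1)\ge 2l_1>l_1$, so the raw measures cannot coincide, and ``$l_1=l_2$'' is simply wrong. What the two equidistribution theorems give is equality of the \emph{normalized} limit measures, as you say in Step 3. The paper's sentence $\mu_{F_t,v}=\mu_{f_t,v}$ must be read that way too.

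Once Steps 1--2 are discarded, Steps 3--4 are a correct proof. The function $l_2G_1-l_1G_2$ is continuous with vanishing Laplacian on $\C$ and bounded (or merely $o(|t|)$) at infinity, hence constant. It vanishes at any $t_n$ with $P_{t_n}$ preperiodic. Multiplying by $m_1/l_1$ and using $m_1l_2=(m_1+m_2)l_1$ gives the lemma. For the growth estimate on $G_2$, make sure you use the domination $l_2>l_1+k_1$ coming from hypothesis (2), not only hypothesis (1).

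This is a genuinely different route from the paper's. The paper extracts from equidistribution only the equality of \emph{supports}. It gets $\partial M_1=\partial M_2$ for the bounded loci and upgrades this to $M_1=M_2=:M$ via connectedness of the complements and the maximum principle. It then identifies $G_1$ and $G_2$ as multiples of the Green function of the compact set $M$ (following Baker--DeMarco's Remark 2.3), and fixes the multiples by comparing growth. That argument is insensitive to how the measures are normalized. The cost is the topological step and the appeal to uniqueness of the Green function of $M$.

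Your argument uses the stronger input (equality of the measures up to scaling) and replaces all of that by Liouville plus one common zero. The normalization you flag as the main obstacle in fact takes care of itself. If $dd^cG_1=c\,dd^cG_2$ for some $c>0$, then $G_1-cG_2$ is harmonic on $\C$ and $O(\log|t|)$, hence constant, hence zero by evaluating at a common zero. Comparing leading growth then forces $c=l_1/l_2$. So you need only $G_i=l_i\log|t|+o(\log|t|)$; neither the exact masses nor the $O(1)$ error term is required.
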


 \begin{proof} We first denote 
     \begin{align*} M_1 &= \{t \in \C : a(t) \text{ has bounded orbit under } f_t\},\\
      M_2 &= \{t \in \C : P_t \text{ has bounded orbit under } F_t\}.\end{align*}

      Note that, the above equidistribution argument shows that $\mu_{f_t,v} = \mu_{F_t, v}$ with respect to our fixed Archimedean place $v$. Thus,
      $$ \partial M_{1} = \Supp (\mu_{f_t,v}) = \Supp(\mu_{F_t, v}) = \partial M_2.$$

    Now, since the complements of both $M_1$ and $M_2$ are connected, by the maximum principle of harmonic functions, we have $M \coloneq M_1 = M_2$. Now, since $G_1, G_2 > 0$ in the complement of $M$ and the growth rate of $G_1$ and $G_2$ are $l_1d^m \log|t|$ and $l_2d^m \log|t|$ respectively as $|t| \to \infty$ where $m \in \N$, we have that $G_1$ and $G_2$ are Green's function for $M$ up to multiplications similar as in \cite[Remark 2.3]{BD13}. Therefore, $G_1$ and $G_2$ are only differed by a multiplications. 

   Notice that our assumption on $m_1$ and $m_2$ implies that $$(m_1 + m_2)l_1 = m_1 l_2.$$ Therefore, by comparing the growth rate of $G_1$ and $G_2$, we have 
   $$ (m_1 + m_2)G_1 = m_1 G_2$$

\end{proof}

   Now, note that $m_1(G_2 - G_1) = m_2 G_1$,
   together with the property (2) of the B\"ottcher coordinate, we have that 
   $$ |\Phi_{a(t)}(b(t))|^{m_1} = |\Phi(a(t))|^{m_2} $$
   for all $m \in \N$.

   This implies that there exists a $\xi \in \C^*$ such that 
   $$ \xi \Phi_{a(t)} (b(t))^{m_1} = \Phi(a(t))^{m_2}.$$
   Since $$\Phi_{a_{m}(t)}(b_{m}(t)) =\Phi_{a(t)}(b(t))^{d^m}$$
   $$ \Phi (a_{m}(t)) = \Phi(a(t))^{d^m},$$
   for any $m \in \N$, we have
   $$ \xi^{d^{m }} \Phi_{a_m(t)}(b_m(t))^{m_1} = \Phi(a_m(t))^{m_2},$$
   for all $m \in \N$

   Then by Lemma \ref{lem: xi-root-of-unity}, we have $\xi$ is a root of unity. Thus, there exists a root of unity $\xi'$, a positive integer $k$ and a natural number $i$ such that
   $$ \xi' = \xi^{d^{nk + i}}$$
   for all $n \in \N$. Therefore, 
   \begin{equation}\label{eq: skew-alge-relation-oribt}
       \xi' \Phi_{a_{ nk + i}(t)}(b_{nk + i}(t))^{m_1} = \Phi (a_{nk+i}(t))^{m_2},
   \end{equation} 
   for all $n \in \N$. 

   Now, by Lemma \ref{lem: from-Bot-Cor-to-Analytic-relation}, we have that 
   $$ \Phi_{a_{nk+i}(t)}(b_{nk + i}(t))^{m_1} = P(a_{ nk + i}(t) , b_{ nk + i}(t)) + o(1)$$
for some $P \in \overline{K}[t][x,y]$. Similarly, by \cite[Section 5.6]{BD13}, we have 
$$ \Phi(a_{ nk + i}(t))^{m_2} = Q(a_{ nk + i}(t)) + o(1),$$
where $Q \in \overline{K}[t][x]$. Therefore, the Equation \ref{eq: skew-alge-relation-oribt} implies that
$$ \xi'P(a_{nk +i}(t), b_{ nk +i}(t) ) + o(1) = Q(a_{nk + i}(t)) + o(1)$$
viewing as power series in $t$. This implies that 
$$ \xi'P(a_{ nk +i}(t), b_{ nk +i}(t) )  = Q(a_{ nk + i}(t)) ,$$
and then we have 
$$ F_t^{ nk + i}(a(t), b(t)) \in V(\xi'P(x,y) - Q(x))$$
for all $n \in \N$. This concludes the proof.

\end{proof}
\subsection{Implication on Conjecture \ref{conj: DM24-conj-1,1}}

In this subsection, we prove Corollary \ref{cor: main-skew-imply-conj}, which demonstrates the implication of Theorem \ref{thm:mainpolyskewinsubvar} towards Conjecture \ref{conj: DM24-conj-1,1}.

\begin{proof}[Proof of Corollary \ref{cor: main-skew-imply-conj}]
    Since Theorem \ref{thm:mainpolyskewinsubvar} implies that $\overline{\Orb_{\mathbf{\Phi}}(\mathbf{X})}$ is a curve in $\P^2_{\overline{K}(t)}$ and $\mathbf{X} $ is not preperiodic under $\mathbf{\Phi}$, we have $r_{\Phi, \mathcal{X}} = 1$. Thus, it is sufficient to show that 
    $$ \hat{T}\wedge \mathcal{X} \neq 0.$$ 

    Let $U$ be the affine chart where $\Phi|_{S \times U} (s,x,y)  =(s, F_s(x,y))$ for some regular polynomial skew products $F_s(x,y) = (f_s(x), g_s(x,y))$ of degree $d > 1$, where $f_s \in K[s][x]$ and $g_s \in K[s][x,y]$.
    Also, we denote 
    $$ F^n_s(x,y) = (f_{s,n}(x), g_{s,n}(x,y)),$$
     where $f_{s,n} \in K[x]$ and $g_{s,n} \in K[s][x,y]$.
    Let
    $$G(s,x,y) \coloneq \lim_{n \to \infty } d^{-n}\log\max\{1, |f_{s,n}(x)|, |g_{s,n}(x,y)|\}.$$
    Then 
    $$ (\hat{T}\wedge \mathcal{X} )|_{S \times U}= dd^c G(s,a(s),b(s)).$$
    Note that $s \to G(s,a(s), b(s))$ is subharmonic, non-constant and bounded from below as $\mathbf{X}$ is not preperiodic under $\mathbf{\Phi}$ and there exists $s_0 \in \overline{K}$ such that $\mathcal{X}_{s_0}$ is preperiodic under $F_{s_0}$, where $F_{s_0}$ denotes the regular polynomial skew products with $s = s_0$ plugged in. Hence $$dd^c G(s,a(s),b(s)) \neq 0.$$
\end{proof}

    \section*{ Acknowledgements}
    We would like to thank Prof. Jason Bell for helpful discussions. X.Z. was supported by the Natural Sciences and Engineering Research Council of Canada through a Discovery Grant (RGPIN-2022-02951).
	
\end{document}